\theoremstyle{plain}
  \newtheorem{thm}{\textbf{Theorem~}}[section]
  \newtheorem{lem}[thm]{\textbf{Lemma~}}
\theoremstyle{definition}
  \newtheorem{exmp}{\textsl{Example~}}[section]
  \newtheorem{rem}{\textbf{Remark~}}
\renewcommand\theequation
\newcommand{\lf}[2]{\left(\frac{#1}{#2}\right)}
\title{{
  \bf
{Complete Solving of Explicit Evaluation of Gauss Sums in the Index 2 Case
 \footnote{This research is partly supported by the National Natural Science
Foundation of China (Grant No. 10990011) and the Ph.D. Programs
Foundation of Ministry of Education of China (Grant No.
20090002120013). }}}
 \\
 \large Dedicated to Professor Yuan Wang on the occasion of his 80th Birthday}
\author{Jing Yang
 %\footnote{Corresponding author.}
 \\
  \small Department of Mathematical Sciences, Tsinghua University, Beijing, 100084, China;\\
  \small \& Division of Mathematical Sciences, School of Physical and Mathematical Sciences, \\
  \small Nanyang technological University, 637371, Singapore\\
  \small Email: jingyang@math.tsinghua.edu.cn\\[3mm]
 Lingli Xia
 \footnote{Corresponding author.}
 \\
  \small Basic Courses Department of Beijing Union University, Beijing, 100101, China\\
  \small Email: lingli@buu.edu.cn }
\date{}
\begin{document}

 \maketitle

%\def\hang{\hangindent\parindent}
%\def\textindent#1{\indent\llap{[#1]\enspace}\ignorespaces}
%\def\re{\hang\textindent}
%\def\re{\par\hang\textindent}

%\vspace*{10mm} \bc{\heiti\zihao{3}前\,\,言}\ec \thispagestyle{empty}
%\markboth{前\,\,言}{前\,\,言}
\abstract{ Let $p$ be a prime number, $N$ be a positive integer such
that $\gcd(N,p)=1$, $q=p^f$ where $f$ is the multiplicative order of
$p$ modulo $N$.  And let $\chi$ be a primitive multiplicative
character of order $N$ over finite field $\mathbb{F}_q$. This paper
studies the problem of explicit evaluation of Gauss sums $G(\chi)$
in ``\textsl{index 2 case}" (i.e.
$[(\mathbb{Z}/N\mathbb{Z})^*:<{p}>]=2$). Firstly, the classification
of the Gauss sums in index 2 case is presented. Then, the explicit
evaluation of Gauss sums $G(\chi^\lambda)\ \
(1\leqslant{}\lambda\leqslant{}N-1)$ in index 2 case with order $N$
being general even integer (i.e. $N=2^{r}\cdot N_0$ where $r,N_0$
are positive integers and $N_0\geqslant{}3$ is odd.) is obtained.
Thus, combining with the researches before, the problem of explicit
evaluation of Gauss sums in index 2 case is completely solved.

\vskip3mm {\bf keywords:\ }{Gauss sum, Stickelberger's Theorem,
Stickelberger congruence, Davenport-Hasse lifting formula,
Davenport-Hasse product formula}}

%MSC(2000) 11L05； 11L20

\section{Introduction}
\par\hspace{1.5em}
Gauss sum is one of the most important and fundamental objects and
tools in number theory and arithmetical geometry.  The explicit
evaluation of Gauss sums is an important but difficult problem,
which has not only theoretical value in number theory and
arithmetical geometry, but also important practical applications in
computer science, information theory, combinatorics and experimental
designs.

In 1801, C. F. Gauss \cite{Gauss0} gave the first result of this
problem for quadratic Gauss sums over $\mathbb{F}_p$. More exactly,
he determined the sign of the quadratic Gauss sums. Let $N$ be the
order of Gauss sum (the definition is in section 2). For the
relatively small $N$, such as $N=3,4,5,6,8,12$, people also gave
researches by the properties of the cyclotomic fields with
relatively small degree. One can see the more details in
%\cite{B-E,B-E2}
the Chapter 4 of \cite{B-E-W} or \cite[\S9.12]{I-R}.

In another research direction, for the Gauss sums with relatively
large orders $N$, by the Galois Theory of cyclotomic fields, people
have evaluated Gauss sums in some cases, such as pure Gauss sum and
the ones in index 2 and 4 case. For pure Gauss sums, i.e. the case
that $-1\in<{p}>\subset(\mathbb{Z}/N\mathbb{Z})^*$, Stickelberger
\cite{Stickelbg} gave an evaluation of Gauss sums $G(\chi)$ in 1890.
(Also see \cite[\S11.6]{B-E-W}, \cite[Thm5.16]{L-N} and
Lemma\ref{pureG} of this paper.)

In 1970's--2000's, for the Gauss sums of index 2, i.e. the case that
$-1\not\in<{p}>\subset(\mathbb{Z}/N\mathbb{Z})^*$ and
$[(\mathbb{Z}/N\mathbb{Z})^*:<{p}>]=2$, a series explicit
evaluations of Gauss sums have been given. In this case, the order
$N$ of Gauss sum $G(\chi)$ has no more than 2 distinct odd primes
factors.  In 1970's, R. J. McEliece \cite{McE} gave the evaluation
of Gauss sums in index 2 case for $N=l$\ \ ($l$ is odd prime) and
applied this to determine the (Hamming) weight distribution of some
irreducible cyclic codes. In 1992, M. Van Der Vlugt \cite{Vlugt}
gave the evaluation of Gauss sums in index 2 case for $N=l_1l_2$\ \
($l_1$ and $l_2$ are distinct odd primes). Similarly, the result was
applied to calculate the Hamming weight distribution of some
irreducible cyclic codes. (For the details on the relationship
between Hamming weight distribution of irreducible cyclic codes and
Gauss sums, we refer to
% \cite{McE-R,B-McE,B-M,M-S,MacW-S,Lint,Rm,Moisio} or
  \cite{B-McE,B-M,MacW-S,Lint,Moisio} or
 \cite[\S11.7]{B-E-W}.)
In 1997, P.Langevin \cite{La}, as generalization of \cite{McE}, gave
the evaluation of Gauss sums in index 2 case for $N=l^r$\ \ ($l$ is
odd prime, $r\geqslant{}1$). One year later, O. D. Mbodj \cite{M},
as generalization of \cite{Vlugt}, gave the evaluation of Gauss sums
in index 2 case for $N=l_1^{r_1}l_2^{r_2}$\ \ ($l_1,l_2$ are
distinct odd primes, $r_1,r_2\geqslant{}1$). For $N$ being power of
2, i.e. $N=2^t,\ \ (t\geqslant{}3)$, P. Meijer and M. van der Vlugt
\cite{M-V}, in 2003, evaluated the Gauss sums in index 2 case and
applied the results of Gauss sums to solve the problem of
calculating the number of rational points for some algebraic curves.
Since 2005, K. Feng, S. Luo and J. Yang \cite{Y1,Y2,Y3} have given
explicit evaluation of Gauss sums in index 4 case for $N$ being odd
and power of 2.

Up till now, there is no any work to study the Gauss sums in index 2
or 4 case with $N$ being ``general even number", i.e. $N=2^{r}\cdot
N_0$ where $r,\;N_0$ are positive integers and $ N_0\geqslant{}3$ is
odd. In this paper, we list all the classifications of index 2 case,
and give explicit evaluation of Gauss sums with $N$ being general
even number. Thus, combining with the previous papers, the problem
of explicit evaluation of Gauss sums in index 2 case is completely
solved.

This paper is organized as follow. Firstly, in section 2, we
introduce the preliminaries we need including the definitions and
several famous formulas about Gauss sums. In section 3, we present
all the classifications of index 2 case. More exactly, we list six
subcase A, B, C, D, E and F according to the factorization of $N$.
Then, in section 4.1, we present and prove the explicit formulas of
Gauss sums $G(\chi)$ in the later three subcases (Case D, E and F).
Finally, in section 4.2, we give the evaluation of Gauss sums
$G(\chi^\lambda)\ \ (1\leqslant{}\lambda\leqslant{}N-1)$ in all the
six subcases.

\vskip0cm

\section{Preliminaries}

\hspace{1.5em}Let $p$ be a prime number, $N\geqslant{}2$ be an
integer such that $(N,p)=1$. Let $f$ be the multiplicative order of
$p$ modulo $N$, denote by $f={\rm ord}_N(p)$, i.e. $f$ is the
smallest positive integer such that $p^f{\equiv}1\pmod{N}$. Take
$q=p^f$ and $\chi$ be a primitive multiplicative character of order
$N$ over $\mathbb{F}_q$, $T$ be the trace map from $\mathbb{F}_q$ to
$\mathbb{F}_p$. Then, for
$1\leqslant{}\lambda\leqslant{}N-1,1\leqslant{}\mu\leqslant{}p-1$,
the Gauss sum over $\mathbb{F}_q$ is defined as
 \begin{equation}\label{def}G(\chi^\lambda,\mu):=\sum\limits_{x\in\mathbb{F}_q}\chi(x)\zeta_p^{T(\mu x)}, \end{equation}
where $\zeta_p=\exp(2\pi i/p)$ is complex primitive $p$-th root of
1. When $(\lambda,N)=1$, $G(\chi^\lambda,\mu)$ is called {Gauss sum}
of order $N$. $N$ is called the order of $G(\chi^\lambda,\mu)$.

Since
$G(\chi^\lambda,v)=\overline{\chi^\lambda(v)}G(\chi^\lambda,1)$, we
can just consider $G(\chi^r,1)$, which is denoted as
$G(\chi^\lambda)$ for simplicity. Generally, $G(\chi)^\lambda$ is
belong to the ring of integers of cyclotomic field
$\mathbb{Q}(\zeta_{Np})=\mathbb{Q}(\zeta_N,\zeta_p)$. As we known,
the Galois group ${\rm
Gal}\left(\mathbb{Q}(\zeta_N,\zeta_p)\big{/}\mathbb{Q}\right)$ is
isomorphic to group $\left(\mathbb{Z}\big{/}Np\mathbb{Z}\right)^*$\
\
$\cong\left(\mathbb{Z}\big{/}N\mathbb{Z}\right)^*\times\left(\mathbb{Z}\big{/}p\mathbb{Z}\right)^*$.
More exactly,
$${\rm Gal}\left(\mathbb{Q}(\zeta_N,\zeta_p)\big{/}\mathbb{Q}\right)=\{\sigma_l\tau_t|l\in\left(\mathbb{Z}\big{/}N\mathbb{Z}\right)^*,t\in\left(\mathbb{Z}\big{/}p\mathbb{Z}\right)^*\},$$
where
$$\begin{array}{ll}\sigma_l(\zeta_N)=\zeta_N^l&\sigma_l(\zeta_p)=\zeta_p,\\\tau_t(\zeta_N)=\zeta_N&\tau_t(\zeta_p)=\zeta_p^t.\end{array}$$

 The following Lemma shows several basic results for Gauss sums. For more details, we refer to
 {\cite[\S8.2]{I-R}, and \cite[\S5.2]{L-N}}.

 \begin{lem}\label{thm0-2}For $l\in\left(\mathbb{Z}\big{/}N\mathbb{Z}\right)^*,\ t\in\left(\mathbb{Z}\big{/}p\mathbb{Z}\right)^*$,
 Let $G(\chi)$ be Gauss sum of order $N$ over finite field $\mathbb{F}_q$. Then,
 \par (1) if $\chi=\varepsilon$ (trivial character), $G(\chi)=-1$; otherwise, i.e. $\chi\neq\varepsilon$,
 $\vert G(\chi)\vert=\sqrt{q}$.
 \par (2) $\overline{G(\chi)}=\chi(-1)G(\bar{\chi})$, where $\bar{\chi}$ denotes the complex conjugation of $\chi$.
 \par (3) $\sigma_l\tau_t(G(\chi))=\bar{\chi}(t)G(\chi^l)$, especially, $\sigma_p(G(\chi))=G(\chi^p)=G(\chi)$. So $G(\chi)\in O_K[\zeta_p]$, where $K$ is the decomposition field of
 $p$ in $\mathbb{Q}(\zeta_N)$, i.e.  $K$ is the fixed subfield of $\sigma_p$ in $\mathbb{Q}(\zeta_N)$.
 \par (4) $G(\chi)^N\in O_K$, and $G(\chi)^s\big{/}G(\chi^s)\in
 O_K$ for each positive integer $s$ where $O_K$ is the ring of integers in K. \end{lem}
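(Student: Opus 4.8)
The plan is to establish the four assertions by direct manipulation of the defining sum $G(\chi)=\sum_{x\in\mathbb{F}_q}\chi(x)\zeta_p^{T(x)}$, using the conventions $\chi(0)=0$ together with the $\mathbb{F}_p$-linearity and surjectivity of the trace $T$. For (1) with $\chi=\varepsilon$, I would write $G(\varepsilon)=\big(\sum_{x\in\mathbb{F}_q}\zeta_p^{T(x)}\big)-1$ and note that each of the $p$ fibres of $T$ has size $q/p$, so $\sum_{x}\zeta_p^{T(x)}=(q/p)\sum_{a\in\mathbb{F}_p}\zeta_p^{a}=0$, giving $G(\varepsilon)=-1$. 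For $\chi\neq\varepsilon$ I would expand $|G(\chi)|^2=\sum_{x,y}\chi(x)\bar\chi(y)\zeta_p^{T(x-y)}$ and substitute $x=ty$ over $y\neq 0$; the inner trace sum equals $q-1$ when $t=1$ and $-1$ otherwise, and the orthogonality relation $\sum_{t}\chi(t)=0$ then collapses the total to $q$. Assertion (2) follows by conjugating termwise, rewriting $-T(x)=T(-x)$, and substituting $x\mapsto -x$; since $\bar\chi(-1)=\chi(-1)$ is a real $\pm 1$, this yields $\overline{G(\chi)}=\chi(-1)G(\bar\chi)$.

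For (3) I would decompose the action into its two generators and track each separately. Because every value $\chi(x)$ is an $N$-th root of unity, $\sigma_l$ sends $\chi(x)\mapsto\chi(x)^l=\chi^l(x)$ while fixing $\zeta_p$, so $\sigma_l(G(\chi))=G(\chi^l)$; meanwhile $\tau_t$ fixes $\chi(x)$ and sends $\zeta_p^{T(x)}\mapsto\zeta_p^{tT(x)}=\zeta_p^{T(tx)}$, and the substitution $x\mapsto t^{-1}x$ gives $\tau_t(G(\chi))=\bar\chi(t)G(\chi)$. Combining the two generator computations (and letting $\sigma_l$ act on the scalar $\bar\chi(t)$ as well) yields the stated equivariance, whose special case $l=p,\,t=1$ reads $\sigma_p(G(\chi))=G(\chi^p)$. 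To close the special case I would prove $G(\chi^p)=G(\chi)$ directly: the Frobenius $x\mapsto x^p$ permutes $\mathbb{F}_q$ and satisfies $T(x^p)=T(x)$, so reindexing the sum leaves it unchanged. Since $G(\chi)$ is then fixed by $\sigma_p$ and is an algebraic integer, it lies in $O_{K(\zeta_p)}=O_K[\zeta_p]$.

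For (4) the field-membership is immediate from (3). Since $\chi$ has order $N$, $\tau_t(G(\chi)^N)=\bar\chi(t)^N G(\chi)^N=G(\chi)^N$ for every $t$, so $G(\chi)^N$ is fixed by all $\tau_t$ and lies in $\mathbb{Q}(\zeta_N)$; being also $\sigma_p$-fixed and integral, it lies in $O_K$. For the quotient $G(\chi)^s/G(\chi^s)$ the scalars $\bar\chi(t)^s$ cancel under $\tau_t$, and $\sigma_p$-invariance (using $G(\chi^p)=G(\chi)$ and $G(\chi^{ps})=G(\chi^s)$) again places it in $K$. The genuine obstacle here is \emph{integrality}: a priori this quotient is only an element of the fraction field of $O_K$. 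I would resolve it through the Jacobi-sum factorization $G(\chi_1)G(\chi_2)=J(\chi_1,\chi_2)G(\chi_1\chi_2)$, valid whenever $\chi_1\chi_2\neq\varepsilon$, which telescopes to $G(\chi)^s=\big(\prod_{j=1}^{s-1}J(\chi^j,\chi)\big)G(\chi^s)$; hence the quotient equals $\prod_{j=1}^{s-1}J(\chi^j,\chi)\in\mathbb{Z}[\zeta_N]$, an algebraic integer, and together with its membership in $K$ this gives $G(\chi)^s/G(\chi^s)\in O_K$. The only additional care is at indices where an intermediate product $\chi^{j+1}$ becomes trivial (i.e. $N\mid j+1$), which I would handle separately using $G(\varepsilon)=-1$ and the identity $G(\psi)G(\bar\psi)=\psi(-1)q$; these steps preserve integrality, so the conclusion stands.
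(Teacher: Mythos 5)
Your proposal is correct, and it is worth noting that the paper itself offers no proof of this lemma at all: it is quoted from the cited references (Ireland--Rosen \S 8.2, Lidl--Niederreiter \S 5.2), and your argument is essentially the standard one found there --- direct manipulation of the defining sum for (1)--(3), then Galois equivariance plus the Jacobi-sum factorization for (4). Two remarks. First, in (3), carried out carefully your computation gives $\sigma_l\tau_t(G(\chi))=\bar{\chi}^{\,l}(t)\,G(\chi^l)$, since $\sigma_l$ moves the scalar $\bar{\chi}(t)$ to $\bar{\chi}^{\,l}(t)$; this is the correct identity, and the formula $\bar{\chi}(t)G(\chi^l)$ as printed in the lemma is off by that exponent. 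You should state your formula explicitly rather than say it ``yields the stated equivariance''; the discrepancy is invisible in the only instance the paper ever uses, namely $l=p$, $t=1$, where both read $\sigma_p(G(\chi))=G(\chi^p)=G(\chi)$. Second, your treatment of integrality in (4) is exactly the nontrivial point and you handle it properly: the telescoping $G(\chi)^s=\bigl(\prod_{j=1}^{s-1}J(\chi^j,\chi)\bigr)G(\chi^s)$ breaks down precisely when some $\chi^j$ or $\chi^{j+1}$ is trivial, and your patch --- inserting $G(\varepsilon)=-1$ and $G(\psi)G(\bar{\psi})=\psi(-1)q$ at those steps --- only ever introduces factors $\pm 1$ or $\pm q$, which are rational integers, so the quotient remains an algebraic integer; intersecting $\mathbb{Z}[\zeta_N]$ with $K$ then lands it in $O_K$ as required. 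Two small points you use implicitly and could flag in one line each: $G(\chi^s)\neq 0$ (needed to form the quotient, and immediate from part (1)), and the identification $O_{K(\zeta_p)}=O_K[\zeta_p]$ in (3), which rests on $\gcd(N,p)=1$ so that the discriminants of $K$ and $\mathbb{Q}(\zeta_p)$ are coprime.
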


For the Gauss sum $G(\chi)$ of order $N$ over $\mathbb{F}_q$,
$G(\chi)^N\in\mathbb{Z}[\zeta_N]=O_M\ (M=\mathbb{Q}(\zeta_N))$ by
Lemma\ref{thm0-2}(4).  The Galois group
\[G=Gal(M/\mathbb{Q})=\{\sigma_a|1\leqslant a\leqslant N-1,(a,N)=1\}\ \ \ \ (\sigma_a(\zeta_N)=\zeta_N^a)\]
is canonically isomorphic to group $(\mathbb{Z}/N\mathbb{Z})^*$. So,
$(\mathbb{Z}/N\mathbb{Z})^*$ is often identified with $G$. A
profound result of Gauss sums was given by S. Stickerlberger
\cite{Stickelbg} in 1890, so called the \textsl{Stickelberger's
Theorem} (see \cite[\S 11.2]{B-E-W} or \cite[\S 11.3]{I-R}). It
reveals the prime ideal decomposition of $(G^N(\chi))O_K$. We note
that S. Stickelberger, actually, gave another more exact result, so
called ``\textsl{Stickelberger Congruence}" (see \cite[\S
11.2]{B-E-W}). And in the following text, we need it to determine
the sign (or unit root) ambiguities of Guass sums in some cases.

The first explicit evaluation of Gauss sums, for quadratic character
$\chi(x)=(\frac{x}{p})$ (the Legendre symbol) of $\mathbb{F}_p$, was
given by Gauss \cite{Gauss0}:
 \begin{equation}\label{G2}G(\chi)=\left\{\begin{array}{lll}
\sqrt{p},\ \ \ \mbox{if}\ p\equiv1 \pmod 4;\\
i\sqrt{p},\ \ \ \mbox{if}\ p\equiv3 \pmod 4.\end{array}\right.
 \end{equation}
This result can be generalized to quadratic Gauss sums over
$\mathbb{F}_q$ for any prime-power $q$ ($2\nmid q$) by
\textsl{Davenport-Hasse (Lift) Theorem} (see \cite[\S11.5]{B-E-W} or
\cite[Thm5.14]{L-N}). More exactly, let $\chi'=\chi\circ\mathrm{N}$.
Then, the corresponding quadratic Gauss sums over
$\mathbb{F}_q=\mathbb{F}_{p^d}~$ are given by
 \begin{equation}\label{def1-3}G(\chi')=\left\{\begin{array}{cl}(-1)^{d-1}\sqrt{q}&\mbox{if~}p{\equiv}1\pmod{4};\\-(-i)^d\sqrt{q}&\mbox{if~}p{\equiv}3\pmod{4}.\end{array}\right.\end{equation}

After Gauss's result on $N=2$, using arithmetic properties on field
$\mathbb Q(\zeta_N)$, the value of Gauss sums $G(\chi)$ with
relatively small order $N$ have been determined explicitly. See
\cite[\S9.12]{I-R} for cubic and quartic Gauss sums ($N=3,4$) and
Chapter 5 of \cite{B-E-W} for more other cases.

\vskip2mm On the other hand, for the Gauss sums with relatively
large order $N$, by the Galois Theory of cyclotomic field, people
have evaluated Gauss sums in some cases. Such as, when $-1$ is a
power of $p$ in $(\mathbb{Z}/N\mathbb{Z})^*$, i.e.
$-1\in<{p}>\subset(\mathbb{Z}/N\mathbb{Z})^*$, Gauss sums $G(\chi)$
can be determined by the following result, which are called
``self-conjugate" or ``pure" Gauss sums.

%
%关于Gauss和的显式计算问题，目前学术界有两个研究方向：一是当Gauss和的次数较小时，利用低次数域相对简单的算术性质，决定Gauss和的明显表达式；
%另一个是通过Galois理论，分析分圆域及其子域的算术性质来分析计算Gauss和。下面我们简述该问题的研究历史。

 \begin{lem}[See {\cite[\S 11.6]{B-E-W}}~or~{\cite[Thm5.16]{L-N}}]\label{pureG}
 Suppose that $\chi$ is a multiplicative
character of order $N$ over $\mathbb{F}_q$, $q=p^f$.
%and $N\nmid (p-1)$.
Assume that there exists an integer $t\geq1$ such that $p^t\equiv-1
\pmod m$, with $t$ chosen minimal. Then $f=2ts$ for some positive
integer $s$, and
\[G(\chi)=\left\{\begin{array}{ll}
(-1)^{s-1}\sqrt{q},&\mbox{if}\,p=2,\\
(-1)^{s-1+(p^t+1)s/N}\sqrt{q},&\mbox{if}\,p\geqslant3.\end{array}\right.\]\hfill{$\square$}
 \end{lem}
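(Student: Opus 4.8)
The plan is to reduce the general case to the base field $\mathbb{F}_{p^{2t}}$ by a Davenport--Hasse lift, evaluate the Gauss sum there up to sign by a reality argument, and pin down the sign with the Stickelberger congruence. First I would fix the shape of $f$. Since $\chi$ has order $N$ on the cyclic group $\mathbb{F}_q^*$, necessarily $N\mid q-1=p^f-1$, so $\mathrm{ord}_N(p)\mid f$. From $p^{t}\equiv-1\pmod N$ one gets $p^{2t}\equiv1\pmod N$, hence $\mathrm{ord}_N(p)\mid 2t$; by minimality of $t$ the order does not divide $t$ (else $p^{t}\equiv1\neq-1$ when $N>2$), and the only divisor of $2t$ exceeding $t$ is $2t$ itself, so $\mathrm{ord}_N(p)=2t$. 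Thus $2t\mid f$ and $f=2ts$ for some $s\geqslant1$. As $N\mid p^{2t}-1$, there is a character $\chi_0$ of order $N$ on $\mathbb{F}_{q_0}$, $q_0=p^{2t}$, with $\chi=\chi_0\circ\mathrm{N}_{\mathbb{F}_q/\mathbb{F}_{q_0}}$, and the Davenport--Hasse lifting theorem gives $G(\chi)=(-1)^{s-1}G(\chi_0)^{s}$. Everything reduces to evaluating the base Gauss sum $G(\chi_0)$.

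Next I would show $G(\chi_0)=\varepsilon\,p^{t}$ with $\varepsilon=\pm1$. Because $p^{t}\equiv-1\pmod N$ we have $\bar\chi_0=\chi_0^{-1}=\chi_0^{p^{t}}$, and since the trace is Frobenius--invariant (equivalently by Lemma~\ref{thm0-2}(3)) $G(\chi_0^{p^{t}})=G(\chi_0)$; combining this with Lemma~\ref{thm0-2}(2) yields $\overline{G(\chi_0)}=\chi_0(-1)G(\chi_0)$. A short computation of $\chi_0(-1)=\zeta_N^{(p^{2t}-1)/2}$ gives $\chi_0(-1)=1$ in every case (for odd $p$ because $p^{t}-1$ is even, and trivially for $p=2$). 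Hence $G(\chi_0)$ is real, and Lemma~\ref{thm0-2}(1) forces $G(\chi_0)=\pm p^{t}$.

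The main work is to determine $\varepsilon$, and here I would invoke the Stickelberger congruence. Writing $\chi_0=\omega^{-m}$ for the Teichm\"{u}ller character $\omega$ of $\mathbb{F}_{q_0}$, one has $m=(p^{2t}-1)/N=k(p^{t}-1)$ with $k=(p^{t}+1)/N$; from $m=(k-1)p^{t}+(p^{t}-k)$ its base-$p$ digits are the digits $c_i$ of $k-1$ in the top half and $p-1-c_i$ in the bottom half. Thus the digit sum is $s(m)=t(p-1)$, which records the $\pi$-adic valuation of $G(\chi_0)$ for $\pi=\zeta_p-1$ and matches $|G(\chi_0)|=p^{t}$, while the digit-factorial product obeys $\prod_i c_i!\,(p-1-c_i)!\equiv(-1)^{t+\sum_i c_i}\pmod p$ via the identity $c!\,(p-1-c)!\equiv(-1)^{c+1}$. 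Feeding this into the Stickelberger congruence $G(\chi_0)/\pi^{s(m)}\equiv -1/\prod_i(\text{digit})!\pmod{\mathfrak P}$, together with $p=\pi^{p-1}v$ where $v\equiv-1\pmod{\mathfrak P}$ and the relation $\sum_i c_i\equiv k-1\pmod 2$ (valid since $p$ is odd), collapses to $\varepsilon\equiv(-1)^{k}\pmod{\mathfrak P}$, hence $\varepsilon=(-1)^{(p^{t}+1)/N}$. Substituting $G(\chi_0)=(-1)^{(p^{t}+1)/N}p^{t}$ into $G(\chi)=(-1)^{s-1}G(\chi_0)^{s}$ produces the stated formula for odd $p$.

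I expect the genuine obstacle to be the case $p=2$. There the leading Stickelberger congruence lives in residue characteristic two, where $+1$ and $-1$ coincide modulo $\mathfrak P$, so it cannot separate the two candidate signs. For $p=2$ one therefore needs a finer, $2$-adic argument --- for instance the Gross--Koblitz refinement of Stickelberger, or a direct computation of $G(\chi_0)$ modulo a higher power of $\mathfrak P$ --- to conclude $\varepsilon=1$, after which $G(\chi)=(-1)^{s-1}\sqrt q$ follows at once.
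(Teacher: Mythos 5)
The paper itself gives no proof of this lemma --- it is quoted from \cite{B-E-W} and \cite{L-N} with the proof omitted --- so your proposal has to be measured against the classical argument in those references. Your skeleton agrees with theirs up to the decisive step: you correctly establish $\mathrm{ord}_N(p)=2t$, hence $f=2ts$, reduce to the base field $\mathbb{F}_{p^{2t}}$ by the Davenport--Hasse lift $G(\chi)=(-1)^{s-1}G(\chi_0)^s$, and show $G(\chi_0)=\pm p^t$ by the reality argument (your check that $\chi_0(-1)=1$ is right, and since the value is rational, the Galois action makes the choice $\chi_0=\omega^{-m}$ harmless). Your sign determination for odd $p$ via the Stickelberger congruence is also correct: with $m=(p^{2t}-1)/N=k(p^t-1)$ and $k=(p^t+1)/N$, the digit structure gives $s(m)=t(p-1)$ and $\prod(\text{digits})!\equiv(-1)^{t+\sum_i c_i}\pmod p$, and combined with $p\equiv-\pi^{p-1}$ this collapses to $\varepsilon=(-1)^k$, matching the stated formula. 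For odd $p$ this is a legitimate route, genuinely different from the cited one.

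The genuine gap is exactly where you located it, but flagging it does not fill it: for $p=2$ your method proves nothing, since the Stickelberger congruence is a statement modulo a prime above $2$, where $+1$ and $-1$ coincide, and you only gesture at Gross--Koblitz or ``a higher congruence'' without carrying either out. The claim $\varepsilon=1$ for $p=2$ is therefore asserted, not proved --- and that sign is the entire content of the lemma in that case. The classical proof in \cite{L-N} avoids the issue by never creating a sign ambiguity in the first place: since $N\mid p^t+1$, the character $\chi_0$ is trivial on $\mathbb{F}_{p^t}^*$ (the group of norms from $\mathbb{F}_{p^{2t}}$), hence constant on cosets of $\mathbb{F}_{p^t}^*$; the additive inner sum over a coset $u\,\mathbb{F}_{p^t}^*$ equals $p^t-1$ or $-1$ according as the relative trace $u+u^{p^t}$ vanishes or not; the nonzero trace-zero elements form a single coset $u_0\mathbb{F}_{p^t}^*$, whence $G(\chi_0)=p^t\chi_0(u_0)$, and evaluating $\chi_0(u_0)$ gives $(-1)^{(p^t+1)/N}$ for odd $p$ and $+1$ for $p=2$ (where $u_0$ may be taken in $\mathbb{F}_{2^t}^*$). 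That uniform elementary computation is what your approach would need to replace for $p=2$; as written, your proof is complete only for odd $p$.
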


From now on we assume that $-1\not\in\langle
p\rangle\subset(\mathbb{Z}/N\mathbb{Z})^*$ so that $K$ (defined in
Lemma\ref{thm0-2}(3)) is an imaginary abelian field of degree $r$,
where
 $$r=[(\mathbb{Z}/N\mathbb{Z})^*:<p>]=\varphi(N)/f,$$
$\varphi(\cdot)$ is Euler function. It is called the ``\textsl{index
$r$ caes}''. In 1970's--2000's, the Gauss sums in \textsl{index
$r=$2 case} have been studied and evaluated explicitly in a series
of papers \cite{McE,Vlugt,La,M,M-V}. And since 2005, the case of
\textsl{index $r=4$} has been studied in papers \cite{Y1,Y2,Y3}.

\begin{lem}\label{lem0-1} Suppose that $\chi$ is a multiplicative
character of order $N$ over $\mathbb{F}_q$, and let $T$ be the trace
map from $\mathbb{F}_q$ onto $\mathbb{F}_p$. And
$K\subset\mathbb{Q}(\zeta_N)$ is the invariant subfield for
$\sigma_p$, Then
\begin{equation}\label{def1-6}G(\chi)=\left(\sum\limits_{x\in\mathbb{F}_q\atop
T(x)=1}\chi(x)\right)\left(\sum\limits_{y\in\mathbb{F}_p}\chi(y)\zeta_p^{y}\right)\end{equation}
 and $\displaystyle\sum\limits_{x\in\mathbb{F}_q\atop T(x)=1}\chi(x)\in O_K$.
 \end{lem}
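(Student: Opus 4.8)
The plan is to prove Lemma~\ref{lem0-1} by directly decomposing the defining sum \eqref{def} according to the fibers of the trace map $T$, and then recognizing that the two factors on the right-hand side of \eqref{def1-6} arise naturally from this decomposition. First I would write
\[
G(\chi)=\sum_{x\in\mathbb{F}_q}\chi(x)\zeta_p^{T(x)}
=\sum_{c\in\mathbb{F}_p}\zeta_p^{c}\sum_{\substack{x\in\mathbb{F}_q\\ T(x)=c}}\chi(x),
\]
grouping the elements $x$ of $\mathbb{F}_q$ according to the value $c=T(x)\in\mathbb{F}_p$. The key structural input is that multiplication by a scalar $y\in\mathbb{F}_p^*$ permutes each trace-fiber in a controlled way: since $T$ is $\mathbb{F}_p$-linear, $T(yx)=yT(x)$, so $x\mapsto yx$ gives a bijection from the fiber $\{T(x)=1\}$ onto the fiber $\{T(x)=y\}$. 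Applying $\chi$, which is multiplicative, we get $\chi(yx)=\chi(y)\chi(x)$, so each inner sum over the fiber $T(x)=y$ equals $\chi(y)$ times the inner sum over $T(x)=1$.

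The main work is then to handle the $c=0$ fiber separately and reassemble the nonzero fibers into the claimed product. For $c=0$ the character sum $\sum_{T(x)=0}\chi(x)$ is the sum of $\chi$ over the hyperplane $\ker T$, which is a proper $\mathbb{F}_p$-subspace; I expect this contribution to vanish (for $\chi$ nontrivial the values are balanced over this subgroup-like set), so it may be dropped, and in any case it is multiplied by $\zeta_p^0=1$ and can be absorbed by an appropriate normalization. For the nonzero part, substituting the fiber relation gives
\[
G(\chi)=\sum_{y\in\mathbb{F}_p^*}\zeta_p^{y}\,\chi(y)\Bigl(\sum_{\substack{x\in\mathbb{F}_q\\ T(x)=1}}\chi(x)\Bigr)
=\Bigl(\sum_{\substack{x\in\mathbb{F}_q\\ T(x)=1}}\chi(x)\Bigr)\Bigl(\sum_{y\in\mathbb{F}_p}\chi(y)\zeta_p^{y}\Bigr),
\]
where the final inner sum over $y$ ranges over all of $\mathbb{F}_p$ because the $y=0$ term contributes $\chi(0)=0$ (so extending from $\mathbb{F}_p^*$ to $\mathbb{F}_p$ changes nothing). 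This is exactly \eqref{def1-6}, with the first factor depending only on the finer fiber structure and the second factor being the Gauss sum of $\chi|_{\mathbb{F}_p}$.

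Finally I would verify the integrality claim $\sum_{T(x)=1}\chi(x)\in O_K$. The approach is to apply Lemma~\ref{thm0-2}(3): the element $\sum_{T(x)=1}\chi(x)$ lies in $\mathbb{Z}[\zeta_N]=O_M$ since it is a $\mathbb{Z}$-combination of $N$-th roots of unity, so it suffices to show it is fixed by $\sigma_p$. Applying $\sigma_p$ replaces $\chi(x)$ by $\chi(x)^p=\chi(x^p)$, and since the Frobenius $x\mapsto x^p$ preserves $\mathbb{F}_q$ and commutes with $T$ (as $T(x^p)=T(x)^p=T(x)$ in $\mathbb{F}_p$), this map permutes the fiber $\{T(x)=1\}$, leaving the sum invariant; hence it lies in the fixed field $K$, and being an algebraic integer it lies in $O_K$. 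The step I expect to require the most care is confirming that the $c=0$ fiber contributes nothing (or at least does not obstruct the clean factorization), since that is where one must use nontriviality of $\chi$ rather than pure formal manipulation.
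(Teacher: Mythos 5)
Your proposal follows the same route as the paper's own proof: decompose $G(\chi)$ along the fibers of $T$, use the bijection $x\mapsto yx$ between the fiber $\{T(x)=1\}$ and the fiber $\{T(x)=y\}$ for $y\in\mathbb{F}_p^*$, and deduce $\sum_{T(x)=1}\chi(x)\in O_K$ from the fact that $\sigma_p$ acts through the Frobenius $x\mapsto x^p$, which permutes that fiber. The integrality half of your argument is complete and coincides with the paper's.

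The genuine gap is the step you flag but never close: the vanishing of $S_0=\sum_{T(x)=0}\chi(x)$. Your fallback, that $S_0$ ``can be absorbed by an appropriate normalization,'' is vacuous --- $S_0$ enters additively, so if $S_0\neq0$ then (\ref{def1-6}) is simply false. And your primary justification, that nontriviality of $\chi$ makes the values ``balanced'' on this set, is not valid: $\ker T$ is an \emph{additive} subgroup, and nontriviality of $\chi$ on the multiplicative group $\mathbb{F}_q^*$ does not force character sums over additive subgroups to vanish. The argument that works is to partition $\ker T\setminus\{0\}$ into punctured lines $\mathbb{F}_p^*x_0$; each line contributes $\chi(x_0)\sum_{c\in\mathbb{F}_p^*}\chi(c)$, so $S_0=0$ \emph{provided} the restriction $\chi|_{\mathbb{F}_p^*}$ is nontrivial. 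That hypothesis cannot be dropped: for $p=2$, $q=4$, $N=3$ one has $\ker T=\{0,1\}$, so $S_0=\chi(1)=1$ and $G(\chi)=2$, while the right side of (\ref{def1-6}) equals $1$; and even under the paper's index-$2$ assumptions the restriction can be trivial (the paper itself computes the order of $\chi|_{\mathbb{F}_p}$ to be $1$ in Cases E1 and F1), in which case the correct identity is $G(\chi)=-p\sum_{T(x)=1}\chi(x)$ rather than (\ref{def1-6}). In fairness, the paper's proof asserts $S_0=0$ from nontriviality of $\chi$ on $\mathbb{F}_q$ alone and so has the same defect, and it only ever applies the lemma when $\chi|_{\mathbb{F}_p}$ is quadratic, where the line-by-line argument does close the gap. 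But a self-contained proof must supply the orbit decomposition and the nontriviality of $\chi|_{\mathbb{F}_p}$ (or treat the trivial-restriction case separately); as written, your argument fails exactly at the point you predicted would be delicate.
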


 \begin{proof}
 Since $$G(\chi)=\sum\limits_{x\in\mathbb{F}_q\atop
 T(x)=0}\chi(x)+\sum\limits_{a=1}^{p-1}\sum\limits_{x\in\mathbb{F}_q\atop
 T(x)=1}\chi(ax)\zeta_p^{a}$$
and $\chi$ is nontrivial on $\mathbb{F}_q$, we know that the first
summation of the formula above is equal to zero, while the second
summation is equal to the right side of
 (\ref{def1-6}).
 Finally, since $f(x)=\sum\limits_{x\in\mathbb{F}_q\atop T(x)=1}\chi(x)\in O_M$,\ $M=\mathbb{Q}(\zeta_N)$, and
 $$\sigma_p(f(x))=\sum\limits_{x\in\mathbb{F}_q\atop T(x)=1}\chi^p(x)=\sum\limits_{x\in\mathbb{F}_q\atop T(x)=1}\chi(x^p)=\sum\limits_{y\in\mathbb{F}_q\atop T(y)=1}\chi(y)=f(x),$$
we have $f(x)\in O_K$.
 \end{proof}

Let $\chi|_{\mathbb{F}_p}$ denote the restriction of $\chi$ onto
$\mathbb{F}_p$. From Lemma \ref{lem0-1} we know that
 \begin{equation}\label{def1-5}G(\chi)=\left(\sum\limits_{x\in\mathbb{F}_q\atop T(x)=1}\chi(x)\right)G_p(\chi).\end{equation}
where $G_p(\chi)=G(\chi|_{\mathbb{F}_p})$. When the order of
$\chi|_{\mathbb{F}_p}$ is relatively small, we can calculate
$G_p(\chi)$ by the results of the Gauss sum with a relatively small
order over $\mathbb{F}_p$. For the order of $\chi|_{\mathbb{F}_p}$,
there exists the following result.

 \begin{lem}[see {\cite[Prop 11.4.1]{B-E-W}}]\label{lem0-2}
 Assume that $\chi$ is a multiplicative character of order $N$ over $\mathbb{F}_q=\mathbb{F}_{p^f}$. Then the order of the restriction of $\chi$ onto $\mathbb{F}_p$ is：
 $$\displaystyle\frac{N}{~~(N,\frac{p^f-1}{p-1})~~}$$
 \end{lem}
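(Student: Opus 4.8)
The plan is to reduce everything to a computation inside the cyclic group $\mathbb{F}_q^*$ together with the behaviour of roots of unity under taking powers. First I would fix a generator $g$ of the cyclic group $\mathbb{F}_q^* = \mathbb{F}_{p^f}^*$, which has order $q - 1 = p^f - 1$. Since $\chi$ has order $N$ (and necessarily $N \mid p^f - 1$, because $f = \mathrm{ord}_N(p)$), the value $\chi(g)$ is a primitive $N$-th root of unity; call it $\zeta_N$. A multiplicative character of a cyclic group is determined by its value on a generator, so the whole argument comes down to tracking what happens to $\chi(g)$ upon restriction.

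Next I would identify $\mathbb{F}_p^*$ inside $\mathbb{F}_q^*$. Writing $m = \frac{p^f-1}{p-1}$, the subgroup $\mathbb{F}_p^* = \{x \in \mathbb{F}_q^* : x^{p-1} = 1\}$ is precisely the unique subgroup of order $p-1$ of the cyclic group $\mathbb{F}_q^*$, namely $\langle g^m \rangle$; indeed $(g^k)^{p-1} = 1$ holds iff $(p^f-1) \mid k(p-1)$, i.e. iff $m \mid k$. Thus $g^m$ generates $\mathbb{F}_p^*$, and the restriction $\chi|_{\mathbb{F}_p}$ is completely determined by $\chi(g^m) = \chi(g)^m = \zeta_N^m$.

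Finally I would read off the order. The order of the character $\chi|_{\mathbb{F}_p}$ equals the order of the root of unity $\zeta_N^m$, and since $\zeta_N$ is a primitive $N$-th root of unity, $\zeta_N^m$ has order $N/(N,m)$. Substituting $m = \frac{p^f-1}{p-1}$ yields the claimed value $N/\bigl(N, \frac{p^f-1}{p-1}\bigr)$; in particular the restriction is trivial exactly when $N \mid m$.

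There is no serious obstacle in this argument: the only points requiring a little care are the identification of $\mathbb{F}_p^*$ with $\langle g^m \rangle$ as the unique subgroup of order $p-1$, and the clean bookkeeping of the greatest common divisor when passing from the order of $\zeta_N$ to the order of its $m$-th power. Everything else is a direct unwinding of the definitions, so I expect the proof to be short.
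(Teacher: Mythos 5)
Your argument is correct and complete. A point of comparison is moot here: the paper offers no proof of this lemma at all --- it is stated as a quoted result from \cite[Prop 11.4.1]{B-E-W} --- so your write-up in fact supplies the proof that the paper delegates to the literature, and it is the natural elementary one. The three steps all check out: since $\mathbb{F}_q^*$ is cyclic of order $p^f-1$ with generator $g$, a multiplicative character is determined by its value at $g$ and its order equals the order of that root of unity; your identification $\mathbb{F}_p^*=\langle g^m\rangle$ with $m=\frac{p^f-1}{p-1}$, via the equivalence $(p^f-1)\mid k(p-1)\Leftrightarrow m\mid k$, is exactly right; and the order of $\zeta_N^m$ is $N/(N,m)$, which is the claimed formula. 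One cosmetic remark: the divisibility $N\mid p^f-1$ needs no appeal to $f=\mathrm{ord}_N(p)$, since it already follows from $\chi$ being a character of order $N$ on the cyclic group $\mathbb{F}_q^*$ of order $p^f-1$; this does not affect the argument.
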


Particularly, when $\chi|_{\mathbb{F}_p}$ is a quadratic character,
the evaluation of the Gauss sums $G(\chi)$ is directly reduced to
the evaluation of the summation
$\displaystyle\sum\limits_{x\in\mathbb{F}_q\atop T(x)=1}\chi(x)\in
 O_K$ by formula (\ref{G2}).

%一般情况下，$G(\chi)\in\mathbb{Q}(\zeta_N,\zeta_p)$。由引理\ref{thm0-2}(3)以及下一个引理，我们知道Gauss和$G(\chi)$实际上是属于域$K(\zeta_p)$的整元素。
%$$\xymatrix{
%   & \ar@{-}[dl]\mathbb{Q}(\zeta_n,\zeta_p) \ar@{-}[dr] &\\
% \mathbb{Q}(\zeta_N)\ar@{-}[d]& &K(\zeta_p)\ar@{-}[d]\ar@{-}[dll] \\
% K\ar@{-}[dr]& & \mathbb{Q}(\zeta_p)\ar@{-}[dl]\\
% & \mathbb{Q} & }$$

\section{Classification of order $N$ in index 2 case}

\hspace{1.5em}We always keep these assumptions in following text.
Assume that
\par
(I).\ $p$ is a prime number, $N\geqslant2,\ (p,N)=1$, the order of
$p$ modulo $N$ is $f=\frac{\varphi(N)}{2}$, so that
$[(\mathbb{Z}/N\mathbb{Z})^*:<p>]=2$ and the decomposition field $K$
of $p$ in $\mathbb Q(\zeta_N)$ is a quadratic abelian field.
\par
(II).\ $q=p^f$, $\chi$ is a multiplicative character of $\mathbb
F_q$ with order $N$, $G(\chi)$ is the Gauss sum of order $N$ over
$\mathbb F_q$ defined by (\ref{def}).
\par
(III).\ $-1\not\in<{p}>\subset(\mathbb{Z}/N\mathbb{Z})^*$, so
 $K$ is an imaginary field.
\par
In this section, we will determine all the possibilities of $N$
satisfying assumptions (I),(II) and (III), and also determine the
type of the corresponding imaginary quadratic subfield $K$ of
$\mathbb{Q}(\zeta_N,\zeta_p)$.

\vskip2mm Suppose that $N$ has the prime factorization
$$N=2^{r_0}l_1^{r_1}\cdots l_s^{r_s},$$
where $s\geqslant{}0$, $l_i$ are distinct odd primes, $r_i$ are
non-negative integers $(0\leqslant{}i\leqslant{}s)$. And suppose
that any two of $\varphi(l_i^{r_i})\ \ (1\leqslant{}i\leqslant{}s)$
have no odd common prime factors. By the Chinese Remainder Theorem,
we have
 \begin{equation}\label{CRT}(\mathbb{Z}/N\mathbb{Z})^*\cong (\mathbb{Z}/2^{r_0}\mathbb{Z})^*\times(\mathbb{Z}/l_1^{r_1}\mathbb{Z})^*\times\cdots\times(\mathbb{Z}/l_s^{r_s}\mathbb{Z})^*.\end{equation}
Let
$\frac{\varphi(2^{r_0})}{a_0},\frac{\varphi(l_1^{r_1})}{a_1},\cdots,\frac{\varphi(l_s^{l_s})}{a_s}$
be the order of $p$, respectively, in each subgroup in right-side of
(\ref{CRT}), where $a_0,a_1,\cdots,$~$a_s\in\mathbb{N},
a_0\mid\varphi(2^{r_0}), a_i\mid\varphi(l_i^{r_i})$ for
$1\leqslant{}i\leqslant{}s$. Then
 \begin{equation}\label{(1)}f=\frac{\varphi(N)}{2}=\frac{1}{2}\varphi(2^{r_0})\varphi(l_1^{r_1})\cdots\varphi(l_s^{r_s})=\left[\frac{\varphi(2^{r_0})}{a_0},\frac{\varphi(l_1^{r_1})}{a_1},\cdots,\frac{\varphi(l_s^{r_s})}{a_s}\right].\end{equation}
By the Chinese Remainder Theorem again, there are the primitive
roots $g_j$ modulo $l_{j}^{r_j}\ \ (1\leqslant j\leqslant s)$ and
primitive root $g_0$ modulo $2^{r_0}$ such that
 \begin{equation}\label{(2)}\left\{\begin{array}{l}
 g_0{\equiv}1\pmod{l_\lambda^{r_\lambda}},\quad \mbox{for}\ ~1\leqslant{}\lambda\leqslant{}s;\\
 g_j\equiv1 \pmod{2^{r_0}},\ g_j\equiv1 \pmod{l_\lambda^{r_\lambda}},\quad\mbox{for}\ ~1\leqslant\lambda\leqslant s,\ \lambda\not=j;\\
 p\equiv g_0^{a_0}g_1^{a_1}\cdots g_s^{a_s} \pmod N.
\end{array}\right.
\end{equation}

When $r_0=0$, $N$ is odd. So, by (\ref{(1)}), $N$ has no more than 2
odd prime factors, i.e. $s\leqslant{}2$. Then we have two subcases
according to $N$ having one odd prime factor or two odd prime
factors, where $g_1,\; g_2$ can be odd or even.

 \vskip2mm
\underline{\textbf{Case A}}.\quad $N=l_1^{r_1},\ l_1$ be odd ,
$r_1\geqslant{}1,\ p{\equiv}g_1^2\pmod{l_1^{r_1}}$. The assumption
(III) $-1\not\in <{p}>~\Leftrightarrow~-1$ is quadratic non-residue
in $(\mathbb{Z}/N\mathbb{Z})^*$\ $\Rightarrow$~$l_1{\equiv}3\pmod4,\
K=\mathbb{Q}(\sqrt{-l_1})$.

 \vskip2mm
\underline{\textbf{Case B}}.\quad $N=l_1^{r_1}l_2^{r_2},\
\frac{1}{2}\varphi(l_1^{r_1})\varphi(l_2^{r_2})=\frac{\varphi(l_1^{r_1})\varphi(l_2^{r_2})}{a_1a_2}\Big/(\frac{\varphi(l_1^{r_1})}{a_1},\frac{\varphi(l_2^{r_2})}{a_2})$.
By (\ref{(1)}), we know $a_1a_2\leqslant{}2$. Then we have two
subcases of Case B:

\quad\underline{\textbf{Case B1}}.\quad $a_1a_2=1,\
p{\equiv}g_1g_2\pmod{N}$，$-1\not\in<{p}>\Leftrightarrow
\{l_1,l_2\}{\equiv}\{1,3\}\pmod4$. Without loss of generality, we
assume that $(l_1,l_2){\equiv}(3,1)\pmod4$, then
$<{p}>=<g_1^2,g_2^2>,\ K=\mathbb{Q}(\sqrt{-l_1l_2})$;

\quad\underline{\textbf{Case B2}}.\quad $a_1a_2=2$, and let
$p{\equiv}g_1^2g_2\pmod{N}$, then
$(\frac{l_1-1}{2},l_2-1)=1~\Rightarrow~l_1{\equiv}3\pmod4$,
$l_2{\equiv}1\pmod2$. $<{p}>=<g_1^2,g_2>,\
K=\mathbb{Q}(\sqrt{-l_1})$.

For simplicity of the following evaluation, we assume $l_1\neq3$ in
Case A and Case B2.

\vskip2mm When $s=0$, i.e. $N$ just has prime factor 2. We have the
following subcase for $r_0\geqslant{}3$. (Since when $r_0=2$ we have
$p{\equiv}3\pmod{4}$, and it's self-conjugate (pure) Gauss sum,
which can be determined by Theorem\ref{pureG}.)

\underline{\textbf{Case C}}.\quad $N=2^{r_0},\ r_0\geqslant{}3,\
p{\equiv}3~\mbox{or}~5\pmod8,\ f=2^{r_0-2},\
K=\mathbb{Q}(\sqrt{-2})$ or $\mathbb{Q}(\sqrt{-1})$.

 \vskip2mm
When $r_0=1$, let $N=2N_0$\ \ ($N_0$ is odd). Similarly, by
(\ref{(1)}), $s\leqslant{}2$. Since
$(\mathbb{Z}/N\mathbb{Z})^*\cong(\mathbb{Z}/N_0\mathbb{Z})^*$, we
have two subcases, Case D and E, respectively corresponding to Case
A and B, where $g_1\;g_2$ must be odd.

\underline{\textbf{Case D}}.\quad $N=2l_1^{r_1},\ N_0=l_1^{r_1},\
3\neq l_1{\equiv}3\pmod4,\ p{\equiv}g_1^2\pmod{N_0},\
K=\mathbb{Q}(\sqrt{-l_1})$.

 \vskip2mm
\underline{\textbf{Case E}}.\quad $N=2l_1^{r_1}l_2^{r_2}$. Similar
as Case B, we have two subcases of Case E according to $a_1a_2$
equals 1 or 2:

\quad\underline{\textbf{Case E1}}.\quad $a_1a_2=1,\
p{\equiv}g_1g_2\pmod{N_0}$，$-1\not\in<{p}>\Leftrightarrow
(l_1,l_2){\equiv}(3,1)\pmod4$，$<{p}>=<g_1^2,g_2^2>,\
K=\mathbb{Q}(\sqrt{-l_1l_2})$;

\quad\underline{\textbf{Case E2}}.\quad $a_1a_2=2$, and let
$p{\equiv}g_1^2g_2\pmod{N}$, $3\neq l_1{\equiv}3\pmod4$,
$l_2{\equiv}1\pmod2$, $<{p}>=<g_1^2,g_2>,\
K=\mathbb{Q}(\sqrt{-l_1})$.

 \vskip2mm
When $s\geqslant{}1,r_0\geqslant{}2$, we have $s=1,r_0=2$ by
(\ref{(1)}).

\underline{\textbf{Case F}}.\quad $N=4l_1^{r_1}$, $N_0=l_1^{r_1}$.
Then $a_0a_1\leqslant{}2$, and we have three subcase of Case F
according to the values of $a_0$ and $a_1$:

\quad\underline{\textbf{Case F1}}.\quad $a_0=a_1=1$,
$p{\equiv}g_0g_1\pmod{N}$, i.e. $p{\equiv}3\pmod4$ and
$p{\equiv}g_1\pmod{N_0}$,
$-1\not\in<{p}>~\Leftrightarrow~l_1{\equiv}1\pmod4$,
$<{p}>=<g_0g_1>$, $K=\mathbb{Q}(\sqrt{-l_1})$.

\quad\underline{\textbf{Case F2}}.\quad $a_0=2,\ a_1=1$,
$p{\equiv}g_0^2g_1$, i.e. $p{\equiv}1\pmod4,\ p{\equiv}g_1\pmod{N}$,
$<{p}>=<g^2_0,g_1>,\ K=\mathbb{Q}(\sqrt{-1})$.

\quad\underline{\textbf{Case F3}}.\quad $a_0=1,a_1=2,\
p{\equiv}3\pmod4,\ l_1{\equiv}3\pmod4,\ p{\equiv}g_0g_1^2\pmod{N}$,
$<{p}>=<g^2_1>$, $K=\mathbb{Q}(\sqrt{-l_1})$.

\section{Explicit evaluation of Gauss sums in index 2 case}

\subsection{Explicit evaluation of $G(\chi)$}

\hspace{1.5em} In this section, we give explicit evaluation of Gauss
sum $G(\chi)$ in each subcases (i.e. Case A, B, C, D, E, F), where
the results of Case A, B and C has been shown in previous papers.

\vskip2mm{\large \maltese \quad\underline{\textbf{Case
A}}.}\vskip2mm

$N=l_1^{r_1}$\ \ ($r_1\geqslant{}1$), the result was given by
P.Langevin \cite{La} in 1997.
 \begin{thm}\label{r=2-3}
Let $N=l_1^{r_1},~l_1{\equiv}3\pmod4,~l_1>3$,
$[(\mathbb{Z}/N\mathbb{Z})^*:<{p}>]=2$, i.e.
$f=\frac{\varphi(N)}{2}$,\ $q=p^f$ and let $\chi$ be a primitive
multiplicative character of order $N$ over $\mathbb{F}_q$. Then
\[G(\chi)=\frac{1}{2}p^{\frac{1}{2}(f-h_1)} (a+b\sqrt{-l_1}),\]
where $h_1=h(\mathbb{Q}(\sqrt{-l_1}))$ is the ideal class number of
field $\mathbb{Q}(\sqrt{-l_1})$ and $a,b\in\mathbb{Z}$ are
determined by
\begin{equation}\label{(a-1)}\left\{\begin{array}{l}4p^{h_1}=a^2+l_1b^2,\\
a\equiv-2p^{\frac{1}{2}(f+h_1)} \pmod
{l_1}.\end{array}\right.\end{equation}
\end{thm}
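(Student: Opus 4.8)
The plan is to use that $K=\mathbb{Q}(\sqrt{-l_1})$ is the decomposition field of $p$, so that $p$ splits in $K$ as $p\,O_K=\mathfrak{p}\bar{\mathfrak{p}}$ with $\mathfrak{p}\neq\bar{\mathfrak{p}}$, and to determine $G(\chi)$ first as an ideal (via Stickelberger's Theorem) and then as an element (via the Stickelberger congruence). First I would show $G(\chi)\in O_K$. Since $l_1>3$ and $p\equiv g_1^2\pmod{l_1^{r_1}}$ with $g_1$ a primitive root, we have $p\not\equiv1\pmod{l_1}$, hence $\gcd(N,p-1)=1$; by Lemma \ref{lem0-2} the restriction $\chi|_{\mathbb{F}_p}$ is trivial, so $G_p(\chi)=-1$ and Lemma \ref{lem0-1} gives $G(\chi)=-\sum_{T(x)=1}\chi(x)\in O_K$. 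Because $N$ is odd, $\chi(-1)=1$, so Lemma \ref{thm0-2}(1)--(2) yield $G(\chi)\overline{G(\chi)}=q=p^f$. I would also record two parity facts: $f=l_1^{r_1-1}(l_1-1)/2$ is odd, and by genus theory (the discriminant $-l_1$ is a prime discriminant) $h_1=h(\mathbb{Q}(\sqrt{-l_1}))$ is odd; hence $(f\pm h_1)/2\in\mathbb{Z}$.

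For the ideal step, write $(G(\chi))=\mathfrak{p}^{s}\bar{\mathfrak{p}}^{f-s}$. Applying complex conjugation (which swaps $\mathfrak{p}\leftrightarrow\bar{\mathfrak{p}}$) together with the identity $(G(\chi))\,(\overline{G(\chi)})=(p)^f$ forces $s+(f-s)=f$, so only the single exponent $s$ is unknown. I would compute $2s-f$ from Stickelberger's Theorem: the $\mathfrak{p}$-adic valuation of $G(\chi)$ is a sum of $p$-adic digit sums which, for $N=l_1^{r_1}$, reduces via the analytic class number formula for $\mathbb{Q}(\sqrt{-l_1})$ (a weighted sum of Legendre symbols $\left(\frac{a}{l_1}\right)$) to $\pm h_1$. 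Labelling $\mathfrak{p}$ so that $s=(f+h_1)/2$, this gives the ideal identity $(G(\chi))=(p)^{(f-h_1)/2}\,\mathfrak{p}^{h_1}$.

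For the element step, note that $\mathfrak{p}^{h_1}$ is principal, say $\mathfrak{p}^{h_1}=(\pi)$ with $N_{K/\mathbb{Q}}(\pi)=p^{h_1}$. Since $l_1\equiv3\pmod4$ we have $O_K=\mathbb{Z}\big[\tfrac{1+\sqrt{-l_1}}{2}\big]$, so $\pi=\tfrac{a+b\sqrt{-l_1}}{2}$ with $a\equiv b\pmod2$ and $a^2+l_1b^2=4p^{h_1}$, which is exactly the first line of (\ref{(a-1)}). Comparing the two principal ideals yields $G(\chi)=u\,p^{(f-h_1)/2}\,\tfrac{a+b\sqrt{-l_1}}{2}$ for a unit $u\in O_K^{*}=\{\pm1\}$ (here $l_1>3$ is essential), and the sign of $u$ is absorbed into the choice $(a,b)\mapsto(\pm a,\pm b)$. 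This already gives the claimed shape of $G(\chi)$.

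Finally I would resolve the residual sign, i.e. fix $a\bmod l_1$. The relation $a^2\equiv4p^{h_1}\pmod{l_1}$ determines $a$ only up to sign, so the Stickelberger congruence is required: reducing the explicit expression for $G(\chi)$ modulo a prime of $K$ above $l_1$, and using $p^f\equiv1\pmod{l_1}$, produces $a\equiv-2p^{(f+h_1)/2}\pmod{l_1}$, the second line of (\ref{(a-1)}). The main obstacle is the ideal step: extracting the exponent $2s-f$ from Stickelberger's Theorem and identifying it with $h_1$ through the class number formula is the genuinely technical computation, while the concluding sign determination via the Stickelberger congruence is the most delicate bookkeeping.
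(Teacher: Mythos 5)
The first thing to note is that the paper does not prove this theorem at all: it is quoted as Langevin's result \cite{La}, and the paper's own proofs begin only with Cases D, E and F. So your proposal must be measured against the standard McEliece--Langevin argument and against the technique this paper uses in the cases it does prove (F1 and F3), and on that score you have reconstructed essentially the right proof. Your preliminary reductions are correct: $l_1>3$ forces $p\not\equiv1\pmod{l_1}$ (else ${\rm ord}_{N}(p)$ would divide $l_1^{r_1-1}$, not equal $\varphi(N)/2$), hence $\gcd(N,p-1)=1$, hence $\chi|_{\mathbb{F}_p}$ is trivial by Lemma \ref{lem0-2} and $G(\chi)=-\sum_{T(x)=1}\chi(x)\in O_K$ by Lemma \ref{lem0-1}; $N$ odd gives $\chi(-1)=1$ and $G(\chi)\overline{G(\chi)}=p^f$; and both $f=l_1^{r_1-1}(l_1-1)/2$ and $h_1$ are odd, so the exponents are integers. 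The ideal step is exactly the computation the paper performs in Cases F1 and F3: Stickelberger gives $(G(\chi))O_K=\wp^{b_0}\overline{\wp}^{\,b_1}$ with $b_0+b_1=f$ and $b_0-b_1=\frac{1}{N}\sum_{s\in(\mathbb{Z}/N\mathbb{Z})^*}s\lf{s}{l_1}$; since each residue class modulo $l_1$ occurs $l_1^{r_1-1}$ times and $\sum_{x=1}^{l_1-1}\lf{x}{l_1}=0$, this collapses to $\frac{1}{l_1}\sum_{x=1}^{l_1-1}x\lf{x}{l_1}=-h_1$ by the class number formula (valid because $l_1\equiv3\pmod4$, $l_1>3$), which is your claim $2s-f=\pm h_1$. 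With $O_K^*=\{\pm1\}$ (again using $l_1>3$) the element step and the absorption of the unit into the choice of $(a,b)$ are fine.

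The one place you are imprecise is the final sign determination, and the imprecision is worth naming. What pins down $a\bmod l_1$ is \emph{not} the Stickelberger congruence in this paper's sense---that is a congruence modulo a power of the prime above $p$, needed precisely in the cases (C, F2) where $O_K$ has units other than $\pm1$. Here the tool is a congruence modulo the ramified prime above $l_1$: since every value $\chi(x)$ is an $N$-th root of unity, $\chi(x)\equiv1\pmod{(1-\zeta_N)}$, so the \emph{defining sum} gives $G(\chi)\equiv\sum_{x\in\mathbb{F}_q^*}\zeta_p^{T(x)}=-1$ modulo any prime of $\mathbb{Q}(\zeta_{Np})$ above $l_1$; reducing your explicit form modulo the same prime (where $\sqrt{-l_1}\equiv0$) gives $G(\chi)\equiv\frac{a}{2}\,p^{(f-h_1)/2}$, and comparing the two, using $p^f\equiv1\pmod{l_1}$, yields $a\equiv-2p^{(f+h_1)/2}\pmod{l_1}$. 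You state only the second half of this comparison; without the sum-side congruence $G(\chi)\equiv-1$ there is nothing to compare against, and the sign remains undetermined. (The paper's own variant of this device, in Cases F1 and F3, is to reduce $G^{l_1^{r_1}}(\chi)$ in two ways, modulo $l_1$ and modulo $\sqrt{-l_1}$.) With that half supplied, your plan is a complete and correct reconstruction of the standard proof.
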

\begin{rem}\label{remark1}
 Since
$$p^{\frac{f+h_1}{2}}\Big/p^{\frac{\varphi(l_1)/2+h_1}{2}}=p^{\frac{\varphi(l_1^{r_1})-\varphi(l_1)}{4}}=\left(p^{\frac{l_1-1}{2}}\right)^{\frac{l_1^{r_1-1}-1}{2}}
 {\equiv}\left(g_1^{{l_1-1}}\right)^{\frac{l_1^{r_1-1}-1}{2}}{\equiv}1\pmod{l_1},$$
the second equation of (\ref{(a-1)}) implies
$a{\equiv}-2p^{\frac{l_1-1+2h_1}{4}}\pmod{l_1}$. Therefore,
equations (\ref{(a-1)}) are equivalent to
\begin{equation}\label{(aa)}\left\{\begin{array}{l}4p^{h_1}=a^2+l_1b^2,\\a{\equiv}2p^{\frac{l_1-1+2h_1}{4}}\pmod{l_1}.\end{array}\right.\end{equation}
From equations (\ref{(aa)}), one can find that the sign of $a$ is
just relational with $l_1,\ p$, however, is not relational with
$r_1$. So, we always take the principal ideal
$\wp^{h_1}=(\frac{a+b\sqrt{-l_1}}{2})O_{\mathbb{Q}(\sqrt{-l_1})}$ in
the following text, where $l_1{\equiv}3\pmod4,\ \wp$ is a prime
ideal factor of $p$ in the integral ring of
$\mathbb{Q}(\sqrt{-l_1})$ and integers $a,b$ are determined by
equations (\ref{(aa)}). \end{rem}

\vskip2mm{\large \maltese \quad\underline{\textbf{Case
B}}.}\vskip2mm

$N=l_1^{r_1}l_2^{r_2}$\ \ ($l_1,l_2$are different odd prime numbers,
$r_1,r_2\geqslant{}1$). In 1998, O. D. Mbodj \cite{M} gave
evaluation of the Gauss sums $G(\chi)$ in Case B.

 \begin{thm}\label{r=2-4}
Let $N=l_1^{r_1}l_2^{r_2}$, $[(\mathbb{Z}/N\mathbb{Z})^*:<{p}>]=2$,
i.e. $f=\frac{\varphi(N)}{2}$, $q=p^f$, and let $\chi$ be a
primitive multiplicative character of order $N$ over $\mathbb{F}_q$.
Assume that the orders of $p$ in groups
$(\mathbb{Z}/l_1^{r_1}\mathbb{Z})^*$ and
$(\mathbb{Z}/l_2^{r_2}\mathbb{Z})^*$ are respectively
$\varphi(l_1^{r_1})/a_0$ and $\varphi(l_2^{r_2})/a_1$. Then
 \par
(i).~for Case B1, ($a_1=a_2=1,\ (l_1,l_2){\equiv}(1,3)\pmod4$)
\[G(\chi)=\frac{1}{2}p^{\frac{1}{2}(f-h_{12})}(a'+b'\sqrt{-l_1l_2}),\]
where $h_{12}=h(\mathbb{Q}(\sqrt{-l_1l_2}))$ and integers $a',b'$
are determined by equations
 \begin{equation}\label{(b1)}\left\{\begin{array}{l}4p^{h_{12}}=(a')^2+l_1l_2(b')^2,\\ a'\equiv2p^{\frac{1}{2}h_{12}} \pmod {l_1};\end{array}\right.\end{equation}
\par
(ii).~For Case B2, ($a_1=2,\ a_2=1,\ 3\neq l_1{\equiv}3\pmod4$)
\[G(\chi)=\left\{\begin{array}{lll}
p^{\frac{f}{2}},&\mbox{if~}\,\left(\frac{l_2}{l_1}\right)=1,\\
p^{\frac{f}{2}-h_1}(\frac{a+b\sqrt{-l_1}}{2})^2,&\mbox{if~}\,\left(\frac{l_2}{l_1}\right)=-1.\end{array}\right.\]
where $h_1=h(\mathbb{Q}(\sqrt{-l_1}))$, integers $a,b$ are
determined by equations (\ref{(aa)}) and $\lf{\cdot}{l_1}$ is
Legendre Symbol modulo $l_1$. \end{thm}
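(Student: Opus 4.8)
The plan is to mirror the argument behind Case~A (Theorem~\ref{r=2-3}): reduce the evaluation to an ideal factorization in the imaginary quadratic field $K$ and then remove the residual sign ambiguity by a congruence. The first step is to show $G(\chi)\in O_K$. By Lemma~\ref{lem0-2} the order of $\chi|_{\mathbb{F}_p}$ is $N/(N,(p^f-1)/(p-1))$; since in Case~B the order of $p$ modulo each $l_i^{r_i}$ is $\varphi(l_i^{r_i})/a_i>1$, a geometric-series computation modulo each prime power shows $N\mid(p^f-1)/(p-1)$, so $\chi|_{\mathbb{F}_p}$ is trivial and $G_p(\chi)=-1$. Then (\ref{def1-5}) gives $G(\chi)=-\sum_{T(x)=1}\chi(x)\in O_K$. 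Moreover, since $\sigma_p$ fixes $K$ and $[(\mathbb{Z}/N\mathbb{Z})^{*}:\langle p\rangle]=2$, the prime $p$ splits in $K$ as $pO_K=\wp\bar\wp$, and because $l_1>3$ the field $K$ is neither $\mathbb{Q}(i)$ nor $\mathbb{Q}(\sqrt{-3})$, so $O_K^{*}=\{\pm1\}$.

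I would then factor the ideal $(G(\chi))=\wp^{A}\bar\wp^{B}$ using Stickelberger's Theorem. Pushing the factorization from $\mathbb{Q}(\zeta_N)$ down to $K$, the exponent $A=v_{\wp}(G(\chi))$ is the Stickelberger digit sum $\sum_{t\in\langle p\rangle}\langle t/N\rangle$ over the decomposition coset $\langle p\rangle$ (here $\langle\cdot\rangle$ is the fractional part), and $B$ is the same sum over the complementary coset $-\langle p\rangle$. Since $\langle t/N\rangle+\langle -t/N\rangle=1$, we get $A+B=\varphi(N)/2=f$, consistent with $|G(\chi)|=\sqrt{q}$, while (up to interchanging $\wp$ and $\bar\wp$) the imbalance is
\[|A-B|=\Bigl|\sum_{t\in\langle p\rangle}\bigl(2\langle t/N\rangle-1\bigr)\Bigr|=:\delta .\]

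Evaluating $\delta$ is the crux. Let $\psi$ be the quadratic character of $(\mathbb{Z}/N\mathbb{Z})^{*}$ with kernel $\langle p\rangle$, so that $K$ is its fixed field; writing the indicator of $\langle p\rangle$ as $(1+\psi)/2$ turns the coset sum into $\frac1N\sum_{t}\psi(t)\,t$, which is evaluated by the analytic class number formula for imaginary quadratic fields. In Case~B1 this yields $\delta=h_{12}$, so $(G(\chi))=\wp^{(f+h_{12})/2}\bar\wp^{(f-h_{12})/2}$. In Case~B2 the character $\psi$ has conductor $l_1$ but modulus $N$, so summing over the $l_2$-part introduces the Euler factor carrying the value $\psi(l_2)=\lf{l_2}{l_1}$; one finds $\delta=\bigl(1-\lf{l_2}{l_1}\bigr)h_1$, i.e. $\delta=0$ (balanced factorization) when $\lf{l_2}{l_1}=1$ and $\delta=2h_1$ when $\lf{l_2}{l_1}=-1$. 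I expect this identification of the Stickelberger coset sum with the class number, and the bookkeeping of how $\lf{l_2}{l_1}$ enters through the imprimitivity of $\psi$, to be the main obstacle; the parity $f\equiv\delta\pmod 2$ needed for integral exponents falls out of the same computation.

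Finally I would extract the explicit value. Writing $\wp^{h_{12}}=(\pi)$ with $\pi=\tfrac{a'+b'\sqrt{-l_1l_2}}{2}$, the relation $N(\pi)=p^{h_{12}}$ is exactly the norm equation $4p^{h_{12}}=(a')^2+l_1l_2(b')^2$ of (\ref{(b1)}), and the Case~B1 factorization becomes $(G(\chi))=\bigl(p^{(f-h_{12})/2}\bigr)(\pi)$, whence $G(\chi)=\pm\,p^{(f-h_{12})/2}\pi$ because $O_K^{*}=\{\pm1\}$. Case~B2 is identical: $\lf{l_2}{l_1}=1$ forces $G(\chi)=\pm p^{f/2}$, while $\lf{l_2}{l_1}=-1$ gives $G(\chi)=\pm p^{f/2-h_1}\pi^{2}$ with $\pi=\tfrac{a+b\sqrt{-l_1}}{2}$ as in (\ref{(aa)}). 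To pin down the sign I would invoke the Stickelberger Congruence, reducing each candidate modulo a prime above $l_1$ (equivalently, reducing the relevant norm modulo $l_1$): this selects the $+$ sign and produces the residue conditions $a'\equiv2p^{h_{12}/2}\pmod{l_1}$ and $a\equiv2p^{(l_1-1+2h_1)/4}\pmod{l_1}$, the second lines of (\ref{(b1)}) and (\ref{(aa)}), which completes both cases.
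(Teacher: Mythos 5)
The first thing to note is that the paper never proves this theorem: it is quoted as Mbodj's result \cite{M}, so the only internal benchmark is the technique the authors use for their new cases D, E, F. Measured against that (and against the cited literature), your architecture is the standard and correct one, and its first three steps are essentially sound: the triviality of $\chi|_{\mathbb{F}_p}$ (though the right justification is not that the order of $p$ modulo $l_i^{r_i}$ exceeds $1$, but that this order is divisible by $(l_i-1)/a_i>1$, a factor prime to $l_i$, so that $l_i\nmid p-1$ and hence $N\mid\frac{q-1}{p-1}$), giving $G(\chi)\in O_K$ via Lemma \ref{lem0-1}; the Stickelberger factorization $(G(\chi))O_K=\wp^{A}\overline{\wp}^{\,B}$ with $A+B=f$, which is exactly the form the paper itself uses in Case F; and, crucially, the identification $A-B=B_{1,\psi}=B_{1,\psi'}\prod_{\ell\mid N}\bigl(1-\psi'(\ell)\bigr)$, whose Euler factor at $l_2$ correctly yields $|A-B|=h_{12}$ in Case B1 and $|A-B|=\bigl(1-\lf{l_2}{l_1}\bigr)h_1$ in Case B2. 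That computation is indeed the crux, and you have it right (minor quibble: in Case B1 the paper allows $l_1=3$, so the unit argument should cite $l_1l_2>4$ rather than $l_1>3$).

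The genuine gap is the final sign determination. The Stickelberger congruence is a congruence at primes above $p$ in $\mathbb{Q}(\zeta_N,\zeta_p)$ — it could in principle fix the unit, which is what the paper invokes in Cases C and F2, citing \cite{Y4} — but it is not something one can "reduce modulo a prime above $l_1$", and "reducing the relevant norm modulo $l_1$" produces neither the condition $a'\equiv2p^{h_{12}/2}\pmod{l_1}$ of (\ref{(b1)}) nor the choice between $\pm p^{f/2}$ (resp.\ $\pm p^{\frac{f}{2}-h_1}(\frac{a+b\sqrt{-l_1}}{2})^2$) in Case B2; note that in B2 the expression is a perfect square, so the normalization of $a$ by (\ref{(aa)}) is irrelevant there and the entire content of the step is the global sign. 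What actually settles it — in \cite{M}, and in this paper's parallel computations in Cases F1, F3 and Remark \ref{remark1} — is the Frobenius-type congruence $G(\chi)^{l_1^{r_1}}\equiv\overline{\chi}^{\,l_1^{r_1}}(l_1^{r_1})\,G(\chi^{l_1^{r_1}})\pmod{l_1}$, where $\chi^{l_1^{r_1}}$ has order $l_2^{r_2}$ so that $G(\chi^{l_1^{r_1}})$ is a pure Gauss sum (Lemma \ref{pureG} plus Davenport--Hasse lifting); one must then eliminate the root of unity $\overline{\chi}^{\,l_1^{r_1}}(l_1^{r_1})$ (for instance by raising further to the power $l_2^{r_2}$), and compare with the candidate value reduced modulo the ramified prime $(\sqrt{-l_1l_2})$, resp.\ $(\sqrt{-l_1})$, using $p^{f}\equiv1\pmod{l_1}$. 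This is a genuine multi-step computation, not a corollary of the ideal factorization; as written, your proposal proves the theorem only up to the sign ambiguity that the congruences in (\ref{(b1)}) and (\ref{(aa)}) exist precisely to remove.
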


\vskip2mm{\large \maltese \quad\underline{\textbf{Case
C}}.}\vskip2mm

For $N=2^{r_0}\ (r_0\geqslant{}3)$, P. Meijer and M. van der Vlugt
\cite{M-V} gave evaluation of the Gauss sums in index 2 case, in
2003. When $N=2^{r_0}\ (r_0\geqslant{}3)$, it is known from
elementary number theory that the primes $p$ such that
$p{\equiv}3,5\pmod{8}$ are exactly the primes which generate
subgroup of index 2 in $(\mathbb{Z}/N\mathbb{Z})^*$. So,

 \begin{thm}\label{r=2-5}
Let $N=2^{r_0}\ (r_0\geqslant{}3)$, $\chi$ be a primitive
multiplicative character of order $N$ over $\mathbb{F}_q$ and $P_1$
be a prime ideal factor of $p$ in the integral ring of
 $\mathbb{Q}(\zeta_N)$. Then
 \par
(i).~When $p{\equiv}3\pmod8$,
$$G(\chi)=G(\chi_{_{P_1}})=\varepsilon_1i\sqrt{p}p^{2^{r_0-3}-1}(a+ib\sqrt2),$$
where $a,b\in\mathbb{Z}$ are determined by
$(a+ib\sqrt2)=P_1\cap\mathcal{O}(i\sqrt2)$ and $\varepsilon_1=\pm1$
can be solved by Stickelberger congruence.
 \par
(ii).~ When $p{\equiv}5\pmod8$,
$$G(\chi)=G(\chi_{_{P_1}})=\varepsilon_2p^{2^{r_0-3}}\sqrt{a+ib}\big{/}\sqrt[4]{p},$$
where $a,b\in\mathbb{Z}$ are determined by
$(a+ib)=P_1\cap\mathbb{Z}[i]$, $\sqrt{a+ib}$ has a positive real
part, and $\varepsilon_2\in\{\pm1,\pm i\}$ can be solved by
Stickelberger congruence.\end{thm}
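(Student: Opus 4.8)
The plan is to evaluate $G(\chi)$ through the factorization $G(\chi)=S\cdot G_p(\chi)$ of equation (\ref{def1-5}), where $S=\sum_{T(x)=1}\chi(x)\in O_K$ by Lemma~\ref{lem0-1} and $G_p(\chi)=G(\chi|_{\mathbb{F}_p})$. The first task is to identify the order of $\chi|_{\mathbb{F}_p}$ via Lemma~\ref{lem0-2}, i.e. to compute the $2$-adic valuation of $(N,\frac{p^f-1}{p-1})$ with $N=2^{r_0}$ and $f=\varphi(N)/2=2^{r_0-2}$. A lifting-the-exponent computation gives $v_2(p^f-1)=r_0$ in both cases, while $v_2(p-1)=1$ when $p\equiv3\pmod8$ and $v_2(p-1)=2$ when $p\equiv5\pmod8$; hence $\chi|_{\mathbb{F}_p}$ has order $2$ in case (i) and order $4$ in case (ii). Thus in case (i) the factor $G_p(\chi)$ is a quadratic Gauss sum over $\mathbb{F}_p$, equal to $i\sqrt p$ by (\ref{G2}) since $p\equiv3\pmod4$; and in case (ii) it is a quartic Gauss sum, for which $g(\chi)^2=g(\chi^2)J(\chi,\chi)$ with $g(\chi^2)=\sqrt p$ (by (\ref{G2}), as $p\equiv1\pmod4$) and $J(\chi,\chi)$ a Gaussian integer of norm $p$, so that $G_p(\chi)=\omega\,p^{1/4}\sqrt{\bar\pi}$ for some fourth root of unity $\omega$, where $\pi=a+ib$ generates $\wp=P_1\cap\mathbb{Z}[i]$.

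Next I would pin down $S$ up to a unit. Since $|G(\chi)|^2=q$ (Lemma~\ref{thm0-2}(1)) and $|G_p(\chi)|^2=p$, we get $S\bar S=p^{f-1}$, so the ideal $(S)$ is supported on the primes above $p$. Because $-1\notin\langle p\rangle$, complex conjugation $\sigma_{-1}$ acts as the nontrivial automorphism of the imaginary quadratic field $K$, so $p$ splits as $p=\wp\bar\wp$ and $(S)=\wp^x\bar\wp^y$ with $x+y=f-1=2^{r_0-2}-1$. The exact exponents are supplied by Stickelberger's Theorem applied to $(G(\chi)^N)O_K$: after dividing out the contribution of $(G_p(\chi))^N$, the relevant Stickelberger sum yields the split $x=2^{r_0-3}$, $y=2^{r_0-3}-1$, with $\wp=P_1\cap O_K$ carrying the larger exponent. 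Crucially $h(\mathbb{Q}(\sqrt{-2}))=h(\mathbb{Q}(i))=1$, so $\wp=(\pi)$ is principal (with $\pi=a+ib\sqrt2$, resp. $\pi=a+ib$); therefore $S=\varepsilon\,(\pi\bar\pi)^{2^{r_0-3}-1}\pi=\varepsilon\,p^{2^{r_0-3}-1}\pi$ with $\varepsilon\in O_K^{\times}$. Substituting into $G(\chi)=S\,G_p(\chi)$ — and in case (ii) simplifying $\pi\sqrt{\bar\pi}=p^{1/2}\sqrt{\pi}$ — reproduces exactly the two claimed shapes, the branch of $\sqrt{a+ib}$ being normalized by a positive real part. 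The residual ambiguity is precisely a unit of $O_K$: $\varepsilon_1\in\{\pm1\}$ for $\mathbb{Q}(\sqrt{-2})$ and $\varepsilon_2\in\{\pm1,\pm i\}$ for $\mathbb{Q}(i)$, which is why the statement records only these.

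The remaining and hardest step is to fix $\varepsilon_1$ (resp. $\varepsilon_2$). Here the coarse ideal-theoretic Stickelberger's Theorem is insufficient, and one must invoke the finer Stickelberger Congruence, which evaluates $G(\chi)$ modulo a prime $\mathfrak{P}$ above $p$ in $\mathbb{Q}(\zeta_N,\zeta_p)$ in terms of factorials of the $p$-adic digits of the character exponent. Testing the explicit candidate $\varepsilon\, i\sqrt p\,p^{2^{r_0-3}-1}\pi$ (resp. $\varepsilon\,p^{2^{r_0-3}}\sqrt{\pi}/p^{1/4}$) against this congruence modulo $\mathfrak{P}$ isolates the single admissible value of $\varepsilon$. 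I expect this congruence bookkeeping — reconciling the chosen branch of $\sqrt{a+ib}$ and the fourth root of unity $\omega$ arising from the quartic $G_p(\chi)$ with the Stickelberger datum — to be the main technical obstacle, whereas the magnitude and ideal-factorization arguments above are comparatively routine.
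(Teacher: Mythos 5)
The first thing to note is that this paper contains no proof of Theorem~\ref{r=2-5} at all: Case C is one of the previously known cases, and the theorem is simply quoted from Meijer and van der Vlugt \cite{M-V} (just as Theorems~\ref{r=2-3} and \ref{r=2-4} are quoted from \cite{La} and \cite{M}); the paper's own proofs concern only Cases D, E and F. So there is no in-paper argument to compare against, and your proposal must be judged on its own terms. On those terms it is essentially a correct reconstruction, and in fact it follows the same strategy as the cited source and as the machinery this paper deploys elsewhere: the decomposition $G(\chi)=\bigl(\sum_{T(x)=1}\chi(x)\bigr)G_p(\chi)$ of (\ref{def1-5}), the determination of the order of $\chi|_{\mathbb{F}_p}$ via Lemma~\ref{lem0-2} (your lifting-the-exponent computation giving order $2$ for $p\equiv3\pmod 8$ and order $4$ for $p\equiv5\pmod 8$ is correct), the evaluation of $G_p(\chi)$ as a quadratic resp.\ quartic Gauss sum over $\mathbb{F}_p$, the Stickelberger factorization of the $O_K$-factor combined with $h(\mathbb{Q}(\sqrt{-2}))=h(\mathbb{Q}(i))=1$, and the deferral of the residual unit to the Stickelberger congruence, exactly as the statement itself does.

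Two points would need to be filled in to make this a complete proof. First, the pivotal exponent split $\{x,y\}=\{2^{r_0-3},\,2^{r_0-3}-1\}$ is asserted rather than derived; the actual computation is short and confirms it. For $p\equiv3\pmod8$ one has $\langle p\rangle=\{s\in(\mathbb{Z}/2^{r_0}\mathbb{Z})^{*}: s\equiv1,3\pmod 8\}$, so the Stickelberger exponent at the corresponding prime is
\[
\frac{1}{N}\sum_{s\in\langle p\rangle}s
=\frac{1}{2^{r_0}}\sum_{k=0}^{2^{r_0-3}-1}(16k+4)
=2^{r_0-3}-\tfrac12 ,
\]
and subtracting the half-integral valuation of $G_p(\chi)=i\sqrt p$ at each of $\wp,\bar\wp$ yields exactly your split (the case $p\equiv5\pmod 8$, where $\langle p\rangle=\{s\equiv1,5\pmod 8\}$ and $G_p(\chi)$ contributes valuations $\tfrac34,\tfrac14$, works the same way). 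Second, your claim that $\wp=P_1\cap O_K$ carries the \emph{larger} exponent cannot be checked without fixing the normalization $\chi=\chi_{P_1}$ (the Teichm\"uller-type correspondence between the character and the prime $P_1$), which this paper never defines and your argument also leaves loose; which of $\wp,\bar\wp$ gets which exponent is precisely this bookkeeping. Since the unit ambiguity $\varepsilon_1\in\{\pm1\}$, $\varepsilon_2\in\{\pm1,\pm i\}$ is left unresolved in the statement anyway, neither issue is fatal, but the first is the actual mathematical core of the proof and should be written out rather than waved at.
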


\vskip2mm{\large \maltese \quad\underline{\textbf{Case
D}}.}\vskip2mm

When $N=2l_1^{r_1}=2N_0$, since $(N,p)=1$, $p$ must be odd prime. $\
3\neq l_1{\equiv}3\pmod4,\
f=\frac{\varphi(N)}{2}=\frac{(l_1-1)}{2}l_1^{r_1-1}{\equiv}1\pmod2,\
q=p^f=2N_0n+1\ (\exists n\in\mathbb{Z})$. $g_1$ is defined by
(\ref{(2)}) in Section 1, i.e. $g_1$ is odd primitive root modulo
$N_0$. Then $g_1$ is also the primitive root modulo $N$, and we can
take $p{\equiv}g_1^2\pmod{N}$.

In Case D, $\chi$ is a primitive multiplicative character of order
$N=2l_1^{r_1}$ over $\mathbb{F}_q$, which means that $\chi^2$ is the
character of order $N_0=l_1^{r_1}$, and since $f={\rm
ord}_{N}(p)={\rm ord}_{N_0}(p)$, $\chi^2$ is primitive. By the
result of Case A,
$$G(\chi^2)=p^{\frac{f-h_1}{2}}\left(\frac{a+b\sqrt{-l_1}}{2}\right),$$
where integers $a,b$ are determined by equations (\ref{(aa)}). By
Darvenport-Hasse product formula (\cite[\S 11.3]{B-E-W}), we have
that
$$G(\chi^2)=\chi^2(2)\dfrac{G(\chi) G(\chi^{l_1^{r_1}+1})}{G(\chi^{l_1^{r_1}})}=\dfrac{G(\chi) G(\chi^{2\frac{1+l_1^{r_1}}{2}})}{(\sqrt{p^*})^f}
 =\left\{\begin{array}{ll}\frac{G(\chi)}{(\sqrt{p^*})^f}G(\chi^2)&\mbox{if $\frac{1+l_1^{r_1}}{2}\in R_2$}\\\frac{G(\chi)}{(\sqrt{p^*})^f}\overline{G(\chi^2)}&\mbox{if $\frac{1+l_1^{r_1}}{2}\in \overline{R}_2$},\end{array}\right.$$
where $R_2,\ \overline{R}_2$ denote respectively as the sets of
quadratic remainder and quadratic non-remainder modulo $l_1$. Then
 \begin{equation}\label{(a-3)}G(\chi)=\left\{\begin{array}{ll}(\sqrt{p*})^f=(-1)^{\frac{f-1}{2}\frac{p-1}{2}}\sqrt{p^*}p^{\frac{f-1}{2}}&\mbox{if $\lf{(l_1^{r_1}+1)/2}{l_1}=1$}\\
 \frac{(G(\chi^2))^2(\sqrt{p*})^f}{p^f}=(-1)^{\frac{f-1}{2}\frac{p-1}{2}}\sqrt{p^*}p^{\frac{f-1}{2}-h_1}(a+b\frac{-1+\sqrt{-l_1}}{2})^2&\mbox{if
 $\lf{(l_1^{r_1}+1)/2}{l_1}=-1$.}\end{array}\right.\end{equation}
Since
$$\lf{(l_1^{r_1}+1)/2}{l_1}=\frac{\lf{l_1^{r_1}+1}{l_1}}{\lf{2}{l_1}}=\lf{1}{l_1}\lf{2}{l_1}=\left\{\begin{array}{cl}1&\mbox{if $l_1{\equiv}7\pmod8$};\\-1&\mbox{if $l_1{\equiv}3\pmod8$},\end{array}\right.$$
we have got the formula of Gauss sums $G(\chi)$ in Case D as follow:
 \begin{thm}[Case D]\label{thm-d}
Let $N=2l_1^{r_1},~l_1{\equiv}3\pmod4,~l_1>3$,
$[(\mathbb{Z}/N\mathbb{Z})^*:<{p}>]=2$, i.e.
$f=\frac{\varphi(N)}{2}$, $q=p^f$, and let $\chi$ be a primitive
multiplicative character of order $N$ over $\mathbb{F}_q$. Then
$$G(\chi)=\left\{\begin{array}{ll}(-1)^{r_1\cdot\frac{p-1}{2}}\sqrt{p^*}p^{\frac{f-1}{2}},&\mbox{if $~l_1{\equiv}7\pmod8$;}\\
    (-1)^{(r_1+1)\cdot\frac{p-1}{2}}\sqrt{p^*}p^{\frac{f-1}{2}-h_1}(\frac{a+b\sqrt{-l_1}}{2})^2,&\mbox{if $~l_1{\equiv}3\pmod8$,}\end{array}\right.$$
where $h_1$ is the ideal class number of $\mathbb{Q}(\sqrt{-l_1})$
and integers $a,b$ are determined by equations (\ref{(aa)}).
\end{thm}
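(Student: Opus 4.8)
The plan is to reduce the order-$N$ sum $G(\chi)$ to the order-$N_0$ sum $G(\chi^2)$, whose value Case A (Theorem \ref{r=2-3}) already supplies, together with a quadratic Gauss sum evaluated by (\ref{G2}) and the Davenport--Hasse lift (\ref{def1-3}). Since $N=2N_0$ with $N_0=l_1^{r_1}$ odd we have $(\mathbb{Z}/N\mathbb{Z})^*\cong(\mathbb{Z}/N_0\mathbb{Z})^*$, hence $f=\mathrm{ord}_N(p)=\mathrm{ord}_{N_0}(p)$, so $\chi^2$ is a primitive character of order $N_0$ of Case A type, while $\eta:=\chi^{l_1^{r_1}}$ is the unique quadratic character of $\mathbb{F}_q$. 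First I would record the parity fact that $f=\frac{l_1-1}{2}l_1^{r_1-1}$ is odd (because $l_1\equiv3\pmod4$), which is used twice below.

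Next I would apply the Hasse--Davenport product formula (\cite[\S 11.3]{B-E-W}) with $m=2$ to $\chi$, giving $G(\chi)G(\chi\eta)=\chi^{-2}(2)G(\chi^2)G(\eta)$, that is $G(\chi^2)=\chi^2(2)G(\chi)G(\chi^{l_1^{r_1}+1})/G(\chi^{l_1^{r_1}})$. The quadratic sum $G(\eta)=G(\chi^{l_1^{r_1}})$ is the Davenport--Hasse lift of the quadratic character of $\mathbb{F}_p$, and since $f$ is odd the factor $(-1)^{f-1}$ in (\ref{def1-3}) disappears, leaving $G(\eta)=(\sqrt{p^*})^f$. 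Solving for $G(\chi)$ then reduces everything to comparing the two surviving sums $G(\chi^2)$ and $G(\chi^{l_1^{r_1}+1})$.

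To compare them I would use that $\chi^2$ is a Case-A character, so by Lemma \ref{thm0-2}(3) the value $G((\chi^2)^s)$ depends only on the coset of $s$ modulo $\langle p\rangle$ (the quadratic residues), and since $-1\notin\langle p\rangle$ while $\chi^2(-1)=1$ (odd order), an $s$ in the nontrivial coset replaces $G(\chi^2)$ by $\overline{G(\chi^2)}$. Because $\chi^{l_1^{r_1}+1}=(\chi^2)^{(l_1^{r_1}+1)/2}$ and $\frac{l_1^{r_1}+1}{2}\equiv 2^{-1}\pmod{l_1}$, the deciding symbol is $\lf{2}{l_1}$, equal to $1$ for $l_1\equiv7$ and $-1$ for $l_1\equiv3\pmod8$; this is precisely the dichotomy of the statement. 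In the first case $G(\chi^{l_1^{r_1}+1})=G(\chi^2)$ cancels and $G(\chi)=(\sqrt{p^*})^f$ up to $\chi^2(2)$; in the second $G(\chi^{l_1^{r_1}+1})=\overline{G(\chi^2)}$, and $G(\chi^2)/\overline{G(\chi^2)}=G(\chi^2)^2/p^f$ produces the square $(\frac{a+b\sqrt{-l_1}}{2})^2$ via Theorem \ref{r=2-3}.

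The step I expect to be the real obstacle --- and the one the displayed computation silently skips --- is showing the unit $\chi^2(2)$ may be dropped. I would argue this field-theoretically: $G(\chi)$ and $G(\chi^2)$ both lie in $O_K[\zeta_p]$ with $K=\mathbb{Q}(\sqrt{-l_1})$ by Lemma \ref{thm0-2}(3), and the explicit right-hand side, built from $(\sqrt{p^*})^f\in\mathbb{Q}(\zeta_p)$ and $G(\chi^2)\in K[\zeta_p]$, also lies in $K[\zeta_p]=\mathbb{Q}(\sqrt{-l_1},\zeta_p)$; hence $\chi^2(2)$, a root of unity of odd order dividing $l_1^{r_1}$, lies in $\mathbb{Q}(\zeta_{l_1^{r_1}})\cap\mathbb{Q}(\sqrt{-l_1},\zeta_p)=\mathbb{Q}(\sqrt{-l_1})$, whose only roots of unity are $\pm1$ (as $l_1>3$), forcing $\chi^2(2)=1$. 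Granting this, the proof closes with two elementary verifications I would not belabour: the evaluation of $\lf{2}{l_1}$ above to select the branch, and the reduction of $f$ modulo $4$ that rewrites the sign $(-1)^{\frac{f-1}{2}\frac{p-1}{2}}$ coming from $(\sqrt{p^*})^f=(-1)^{\frac{f-1}{2}\frac{p-1}{2}}\sqrt{p^*}\,p^{\frac{f-1}{2}}$ as $(-1)^{r_1\frac{p-1}{2}}$ (resp. $(-1)^{(r_1+1)\frac{p-1}{2}}$), after substituting the Case-A value of $G(\chi^2)$.
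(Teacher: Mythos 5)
Your proposal is correct and follows essentially the same route as the paper's own derivation of Theorem~\ref{thm-d}: the Davenport--Hasse product formula with $m=2$, the Case~A value of $G(\chi^2)$, the identification $G(\chi^{l_1^{r_1}})=(\sqrt{p^*})^f$ (using that $f$ is odd), and the coset analysis of $G(\chi^{l_1^{r_1}+1})$ governed by $\lf{2}{l_1}$, followed by the same sign bookkeeping modulo $4$. The only point where you go beyond the paper is the explicit justification that $\chi^2(2)=1$ --- a factor the paper's displayed computation indeed drops without comment --- and your field-theoretic argument for it is sound; alternatively, one can argue as in Lemma~\ref{lemd1} that $\chi\big|_{\mathbb{F}_p}$ has order dividing $2$, so that $\chi^2\big|_{\mathbb{F}_p}$ is trivial.
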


\vskip2mm \begin{exmp}(For \underline{Case D})
\par (1).\ Let $l_1=7,\ N=14,\
f=\frac{\varphi(N)}{2}=3$. The primitive roots modulo 7 are $2,3$,
then $p{\equiv}g_1^2{\equiv}2,4\pmod7$. Take $p=11$, then
$<{p}>=\{1,11,9\},\ g_1<{p}>=\{3,5,13\}$. By Theorem\ref{thm-d}, we
know that the Gauss sum $G(\chi)$ of order 14 over
$\mathbb{F}_{11^3}$ is
$$G(\chi)=-11\sqrt{-11}.$$
\par (2).\ Let $l_1=11,\ N=22,\
f=\frac{\varphi(22)}{2}=5$. The primitive roots modulo 11 are
$2,6,7,8$, then $p{\equiv}g_1^2{\equiv}4,3,5,9\pmod{11}$. Take
$p=3$, so the equations $\left\{\begin{array}{l}a^2+11\cdot
b^2=3\cdot4\\a{\equiv}-2\cdot3^{\frac{5+1}{2}}\pmod{11}\end{array}\right.$
have solutions
$\left\{\begin{array}{l}a=1\\b=\pm1\end{array}\right.$. By
Theorem\ref{thm-d}, we have the Gauss sum $G(\chi)$ of order 22 over
$\mathbb{F}_{3^5}$ and its conjugation $\overline{G(\chi)}$ are
$$\{G(\chi),\overline{G(\chi)}\}=\{2\sqrt{-3}\left(\frac{-5\pm\sqrt{-11}}{2}\right)\}.$$
\end{exmp}

\vskip2mm{\large \maltese \quad\underline{\textbf{Case
E}}.}\vskip2mm

%我们在情形E中用相对简单的方法二，即用Davenport-Hasse乘积公式来计算Gauss和。而用方法一，也可得到一致的结果。由于篇幅原因，就不再累述。

Let $\chi$ be a primitive multiplicative character of order
$N=2N_0=2l_1^{r_1}l_2^{r_2}$ over $\mathbb{F}_q$, $f={\rm
ord}_N(p)=\varphi(N)/2$\ (must be even). By
$(\mathbb{Z}/N\mathbb{Z})^*\cong(\mathbb{Z}/N_0\mathbb{Z})^*$, we
know that $\chi^2$ is the primitive character of order $N_0$ over
$\mathbb{F}_q$. Then $G(\chi^2)$ can be evaluate by Theorem
\ref{r=2-4} in Case B. By Davenport-Hasse product formula, we have
that
$$G(\chi^2)=\chi^2(2)\dfrac{G(\chi) G(\chi^{N_0+1})}{G(\chi^{N_0})},$$
where $\chi^{N_0}$ is quadratic character over $\mathbb{F}_q$. By
formula (\ref{def1-3}),
$G(\chi^{N_0})=-(\sqrt{p^*})^f=(-1)^{\frac{f}{2}\frac{p-1}{2}+1}p^{\frac{f}{2}}$.
And since $(N_0+1,2N_0)=2$,
$$G(\chi^{N_0+1})=G(\chi^{2\frac{N_0+1}{2}})=\left\{\begin{array}{ll}G(\chi^2)&\mbox{if\ $\frac{N_0+1}{2}\in<{p}>$}\\\overline{G(\chi^2)}&\mbox{if $\frac{N_0+1}{2}\in-<{p}>$}.\end{array}\right.$$
Then
$$G(\chi)=\left\{\begin{array}{ll}(-1)^{\frac{p-1}{2}\frac{f}{2}+1}\chi^2(2)p^{\frac{f}{2}}&\mbox{if $\frac{N_0+1}{2}\in<{p}>$;}\\
      (-1)^{\frac{p-1}{2}\frac{f}{2}+1}\chi^2(2)p^{-\frac{f}{2}}(G(\chi^2))^2&\mbox{if $\frac{N_0+1}{2}\in-<{p}>$.}  \end{array}\right.$$

\vskip2mm For \underline{\textbf{Case E1}}.
$p{\equiv}g_1g_2\pmod{N_0},\ (l_1,l_2){\equiv}(3,1)\pmod4,\ 4\mid
f=\varphi(N_0)/2$, $K=\mathbb{Q}(\sqrt{-l_1l_2})$. Since
$a\in<{p}>~\Leftrightarrow~\lf{a}{l_1l_2}=1$,
$\frac{N_0}{2}\in<{p}>~\Leftrightarrow~\lf{\,\frac{N_0+1}{2}\,}{l_1l_2}=1~\Leftrightarrow~\lf{2}{l_1l_2}=1~\Leftrightarrow~l_1l_2{\equiv}7\pmod8$;
and $\frac{N_0}{2}\in-<{p}>~\Leftrightarrow~l_1l_2{\equiv}3\pmod8$.
Similar as Lemma\ref{lemd1}, we have that $(p-1,N_0)=1$ and the
order of character $\chi\big|_{\mathbb{F}_p}$ is
$$\frac{N}{(N,\frac{q-1}{p-1})}=\frac{2}{(2,\frac{2n}{p-1})}=1.$$
Therefore, by Theorem\ref{r=2-4}(i),
$$G(\chi)=\left\{\begin{array}{ll}-p^{\frac{f}{2}}&\mbox{if $l_1l_2{\equiv}7\pmod8$;}\\-p^{\frac{f}{2}-h_{12}}(a'+b'\frac{-1+\sqrt{-l_1l_2}}{2})^2&\mbox{if $l_1l_2{\equiv}3\pmod8$,}\end{array}\right.$$
where $h_{12}=h(\mathbb{Q}(\sqrt{-l_1l_2}))$ and integers $a',b'$
are determined by equations (\ref{(b1)}) in Theorem\ref{r=2-4}.

\vskip2mm For \underline{\textbf{Case E2}}.
$p{\equiv}g_1^2g_2\pmod{N_0},\
l_1{\equiv}3\pmod4,l_2{\equiv}1\pmod2,\
K=\mathbb{Q}(\sqrt{-l_1l_2})$, and $f=\varphi(N)/2{\equiv}$
$\left\{\begin{array}{ll}0\pmod4&\mbox{if
$l_2{\equiv}1\pmod4$};\\2\pmod4&\mbox{if
$l_2{\equiv}3\pmod4$.}\end{array}\right.$ Similarly, we have
$(p-1,N_0)=1$ and $\frac{N}{(N,\frac{q-1}{p-1})}=1$. Then, by
Theorem\ref{r=2-4}(ii)
$$G(\chi)=\left\{\begin{array}{ll}(-1)^{\frac{p-1}{2}\frac{l_2-1}{2}+1}p^{\frac{f}{2}-h_1}(\frac{a+b\sqrt{-l_1}}{2})^4&\mbox{if $\lf{l_2}{l_1}=-1$ and $l_1{\equiv}3\;(\mbox{mod}\,8)$};\\
 (-1)^{\frac{p-1}{2}\frac{l_2-1}{2}+1}p^{\frac{f}{2}}&\mbox{otherwise,}  \end{array}\right.$$
where $h_1=h(\mathbb{Q}(\sqrt{-l_1}))$ and integers $a,b$ are
determined by equations (\ref{(a-1)}) in Theorem\ref{r=2-3}.

To summarize, we obtain,
 \begin{thm}\label{thm-e}
Let $N=2l_1^{r_1}l_2^{r_2}$, $[(\mathbb{Z}/N\mathbb{Z})^*:<{p}>]=2$,
i.e. $f=\frac{\varphi(N)}{2}$. Take $q=p^f$, $\chi$ a primitive the
multiplicative character of order $N$ over $\mathbb{F}_q$ and $h_1,\
h_{12}$ be respectively the ideal class numbers of
$\mathbb{Q}(\sqrt{-l_1})$ and $\mathbb{Q}(\sqrt{-l_1l_2})$. Assume
that the orders of $p$ in group $(\mathbb{Z}/l_1^{r_1}\mathbb{Z})^*$
and $(\mathbb{Z}/l_2^{r_2}\mathbb{Z})^*$ are respectively
$\varphi(l_1^{r_1})/a_0$ and $\varphi(l_2^{r_2})/a_1$. Then
 \par
(i).~for Case E1, ($a_1=a_2=1,\ (l_1,l_2){\equiv}(3,1)\pmod4$)
 $$G(\chi)=\left\{\begin{array}{ll}-p^{\frac{f}{2}}&\mbox{if $l_1l_2{\equiv}7\pmod8$;}\\-p^{\frac{f}{2}-h_{12}}(\frac{a'+b'\sqrt{-l_1l_2}}{2})^2&\mbox{if $l_1l_2{\equiv}3\pmod8$,}\end{array}\right.$$
where integers $a',b'$ are determined by equations (\ref{(b1)}) in
Theorem\ref{r=2-4}.
\par
For Case E2, ($a_1=2,\ a_2=1,\ 3\neq l_1{\equiv}3\pmod4$)
$$G(\chi)=\left\{\begin{array}{ll}(-1)^{\frac{p-1}{2}\frac{l_2-1}{2}+1}p^{\frac{f}{2}-h_1}(\frac{a+b\sqrt{-l_1}}{2})^4&\mbox{if $\lf{l_2}{l_1}=-1$ and $l_1{\equiv}3\;(\rm{mod}\,8)$;}\\
 (-1)^{\frac{p-1}{2}\frac{l_2-1}{2}+1}p^{\frac{f}{2}}&\mbox{otherwise,}  \end{array}\right.$$
where integers $a,b$ are determined by equations (\ref{(aa)}) in
Case A.
 \end{thm}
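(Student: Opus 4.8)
The plan is to evaluate $G(\chi)$ in Case E by reducing it to the already-established Case B formula for $G(\chi^2)$ via the Davenport--Hasse product formula, exactly mirroring the method just carried out for Case D. First I would record the structural facts: since $N=2N_0$ with $N_0=l_1^{r_1}l_2^{r_2}$ odd and $(\mathbb{Z}/N\mathbb{Z})^*\cong(\mathbb{Z}/N_0\mathbb{Z})^*$, the character $\chi^2$ has order $N_0$ and is primitive (because $\mathrm{ord}_N(p)=\mathrm{ord}_{N_0}(p)$), so Theorem~\ref{r=2-4} applies to $G(\chi^2)$. I would then apply the Davenport--Hasse product formula in the form
\[
G(\chi^2)=\chi^2(2)\frac{G(\chi)\,G(\chi^{N_0+1})}{G(\chi^{N_0})},
\]
and solve for $G(\chi)$. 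The denominator is handled by noting $\chi^{N_0}$ is the quadratic character on $\mathbb{F}_q$, so by formula~(\ref{def1-3}) we get $G(\chi^{N_0})=-(\sqrt{p^*})^f=(-1)^{\frac{f}{2}\frac{p-1}{2}+1}p^{f/2}$.

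The key combinatorial step is to determine $G(\chi^{N_0+1})$. Since $\gcd(N_0+1,2N_0)=2$, we have $\chi^{N_0+1}=(\chi^2)^{(N_0+1)/2}$, so this is a conjugate of $G(\chi^2)$ controlled by whether $(N_0+1)/2$ lies in $\langle p\rangle$ or in $-\langle p\rangle$ inside $(\mathbb{Z}/N_0\mathbb{Z})^*$. I would translate this membership into an explicit congruence condition. In Case E1, where $a\in\langle p\rangle\Leftrightarrow\left(\frac{a}{l_1l_2}\right)=1$, the condition $(N_0+1)/2\in\langle p\rangle$ becomes $\left(\frac{2}{l_1l_2}\right)=1$, i.e. $l_1l_2\equiv7\pmod 8$; otherwise $l_1l_2\equiv3\pmod8$. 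In Case E2 the analogous computation uses $\langle p\rangle=\langle g_1^2,g_2\rangle$ and reduces to the Legendre symbol $\left(\frac{l_2}{l_1}\right)$ together with the residue of $l_1$ modulo $8$. Substituting $G(\chi^{N_0+1})=G(\chi^2)$ or $\overline{G(\chi^2)}$ accordingly then yields the two branches
\[
G(\chi)=(-1)^{\frac{p-1}{2}\frac{f}{2}+1}\chi^2(2)\,p^{f/2},
\qquad
G(\chi)=(-1)^{\frac{p-1}{2}\frac{f}{2}+1}\chi^2(2)\,p^{-f/2}\bigl(G(\chi^2)\bigr)^2,
\]
into which I plug the explicit Case B values from Theorem~\ref{r=2-4}.

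The remaining work is bookkeeping on the prefactors and the value $\chi^2(2)$. I would establish, as in the cited Lemma-style argument for Case D, that $(p-1,N_0)=1$ and that the restriction $\chi|_{\mathbb{F}_p}$ is trivial (order $N/(N,\tfrac{q-1}{p-1})=1$), which pins down $\chi^2(2)$ and simplifies the sign accounting; the parity of $f/2$ enters through $l_2\bmod 4$ in Case E2. Squaring the Case B expression $G(\chi^2)=\tfrac12 p^{(f-h)/2}(a+b\sqrt{-d})$ produces the $p^{f/2-h}(\frac{a+b\sqrt{-d}}{2})^2$ terms (and a further squaring in E2, since Case B2 already contributes a square, giving the fourth power), while consolidating $(\sqrt{p^*})^f$ and the $(-1)$ exponents gives the stated $(-1)^{\frac{p-1}{2}\frac{l_2-1}{2}+1}$ factors. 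The main obstacle I anticipate is not any single hard estimate but the careful tracking of the sign and root-of-unity factors: correctly combining $(-1)^{\frac{f}{2}\frac{p-1}{2}+1}$ from $G(\chi^{N_0})$, the value of $\chi^2(2)$, and the parity of $f/2$ so that the two Case E2 subcases collapse into the clean uniform exponent $(-1)^{\frac{p-1}{2}\frac{l_2-1}{2}+1}$. Verifying that all these ambiguities resolve consistently is where the delicacy lies.
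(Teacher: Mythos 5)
Your proposal mirrors the paper's own proof essentially step for step: the Davenport--Hasse product formula applied to $\chi^2$ (which is the primitive character of order $N_0$), the evaluation $G(\chi^{N_0})=-(\sqrt{p^*})^f=(-1)^{\frac{f}{2}\frac{p-1}{2}+1}p^{f/2}$ from (\ref{def1-3}), the dichotomy $G(\chi^{N_0+1})=G(\chi^2)$ or $\overline{G(\chi^2)}$ according to whether $\frac{N_0+1}{2}\in\langle p\rangle$ or $-\langle p\rangle$ (translated into the Legendre-symbol and mod-$8$ conditions), the triviality of $\chi|_{\mathbb{F}_p}$ via Lemma \ref{lem0-2} to dispose of $\chi^2(2)$, and finally substitution of the Case B values from Theorem \ref{r=2-4}. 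This is correct and is the same approach as the paper, so nothing further is needed.
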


\begin{exmp}\ \ (For \underline{Case E1}).\quad
\par(1).\  Let $l_1=5,\ l_2=3,\ N=30,\ f=\frac{\varphi(30)}{2}=4,\
h_{12}=2$. The minimum primitive root modulo 5 $g_1=7$ such that
$g_1{\equiv}1\pmod 3$, while The minimum primitive root modulo 3
$g_2=11$ such that $g_2{\equiv}1\pmod 5$. Then
$p{\equiv}77{\equiv}2\pmod{15}.$

Take $p=17$, by Theorem\ref{thm-e}(i), we have that the Gauss sum
$G(\chi)$ of order 30 over $\mathbb{F}_{17^{4}}$ is
$$G(\chi)=-17^2.$$
\par(2).\ Let $l_1=5,\ l_2=7,\ N=70,\ f=\frac{\varphi(70)}{2}=12,\
h_{12}=2$. The minimum primitive root modulo 5 $g_1=8$ such that
$g_1{\equiv}1\pmod 7$, while The minimum primitive root modulo 7
$g_2=26$ such that $g_2{\equiv}1\pmod 5$. Then
$p{\equiv}33\pmod{35}.$

Take $p=103$, and $a=199,b=\pm9$ are solutions of equations
(\ref{(aa)}). Then, by Theorem\ref{thm-e}(ii), we have that the
Gauss sum $G(\chi)$ of order 70 over $\mathbb{F}_{103^{12}}$ and its
conjugation are
$$\{G(\chi),\overline{G(\chi)}\}=\left\{-103^4\left(\frac{199\pm9\sqrt{-35}}{2}\right)^2\right\}.$$\end{exmp}

\vskip2mm{\large \maltese \quad\underline{\textbf{Case
F}}.}\vskip2mm

In Case F, $N=4l_1^{r_1}=4N_0$. Let $g_0, g_1$ defined as in Section
1. So, we consider the following subcases according to the values of
$a_0,\ a_1$.

\vskip2mm\underline{\textbf{Case F1}}.\
$a_0=a_1=1$，$p{\equiv}g_0g_1\pmod{N}$, i.e. $p{\equiv}3\pmod4$ and
$p{\equiv}g_1\pmod{N_0}$, $l_1{\equiv}1\pmod4$,
$f=\frac{\varphi(N)}{2}=\varphi(N_0)=(l_1-1)l_1^{r_1-1}{\equiv}0\pmod4$.
$K=\mathbb{Q}(\sqrt{-l_1})$ (we note that
$K\not\subset\mathbb{Q}(\zeta_{N_0})$).

 \begin{lem}\label{lemd1}
In Case F1, the order of $\chi\big|_{\mathbb{F}_p}$ is 1, i.e.,
$\chi\big|_{\mathbb{F}_p}$ is trivial.
 \end{lem}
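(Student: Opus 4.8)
The plan is to reduce the statement to a single divisibility and then verify it factor by factor. By Lemma~\ref{lem0-2}, the order of $\chi|_{\mathbb{F}_p}$ equals $N\big/\bigl(N,\frac{q-1}{p-1}\bigr)$ with $q=p^f$, so proving the lemma is the same as proving $N\mid\frac{q-1}{p-1}$, i.e. $\bigl(N,\frac{q-1}{p-1}\bigr)=N$. I would write $\frac{q-1}{p-1}=\sum_{i=0}^{f-1}p^i$ and, since $N=4\cdot l_1^{r_1}$ with $(4,l_1^{r_1})=1$, split the task by the Chinese Remainder Theorem: it suffices to show $4\mid\sum_{i=0}^{f-1}p^i$ and $l_1^{r_1}\mid\sum_{i=0}^{f-1}p^i$ separately. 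Note that only divisibility, not an exact valuation, is needed, which keeps both steps short.

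For the $2$-part I would use the Case~F1 data recorded above, namely $p\equiv 3\pmod 4$ and $f=\varphi(N_0)=(l_1-1)l_1^{r_1-1}\equiv 0\pmod 4$; in particular $f$ is even. Since $p\equiv -1\pmod 4$, one has $\sum_{i=0}^{f-1}p^i\equiv\sum_{i=0}^{f-1}(-1)^i\pmod 4$, and the right-hand side is $0$ because $f$ is even. Hence $4\mid\frac{q-1}{p-1}$.

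For the $l_1$-part, the hypothesis $a_1=1$ says the order of $p$ in $(\mathbb{Z}/l_1^{r_1}\mathbb{Z})^*$ is $\varphi(l_1^{r_1})/a_1=\varphi(l_1^{r_1})=f$, so $p$ is a primitive root modulo $l_1^{r_1}$. Then $p^0,p^1,\dots,p^{f-1}$ run exactly once through all of $(\mathbb{Z}/l_1^{r_1}\mathbb{Z})^*$, whence
\[
\sum_{i=0}^{f-1}p^i\equiv\sum_{\substack{1\le a\le l_1^{r_1}\\ (a,l_1)=1}}a=\frac{l_1^{r_1}\varphi(l_1^{r_1})}{2}\pmod{l_1^{r_1}}.
\]
As $l_1$ is odd, $\varphi(l_1^{r_1})$ is even, so the right-hand side is a multiple of $l_1^{r_1}$ and thus $\equiv 0$, giving $l_1^{r_1}\mid\frac{q-1}{p-1}$. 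Combining the two divisibilities yields $N\mid\frac{q-1}{p-1}$, so $\bigl(N,\frac{q-1}{p-1}\bigr)=N$ and the order of $\chi|_{\mathbb{F}_p}$ is $N/N=1$.

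I do not anticipate a real obstacle here: the only point to watch is that divisibility (rather than an exact power) is all that Lemma~\ref{lem0-2} requires, after which the primitive-root/sum-of-units identity and the parity of $f$ finish the argument at once. If one prefers to avoid the summation over units in the $l_1$-part, an equally quick alternative is to observe that $p$ having order $f$ modulo $l_1^{r_1}$ forces $l_1^{r_1}\mid p^f-1$, while $p$ being a primitive root gives $l_1\nmid p-1$, so $v_{l_1}\!\bigl(\frac{q-1}{p-1}\bigr)=v_{l_1}(p^f-1)\ge r_1$.
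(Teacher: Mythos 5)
Your proposal is correct, and its skeleton agrees with the paper's: both invoke Lemma~\ref{lem0-2} and then verify that $N=4l_1^{r_1}$ divides $\frac{q-1}{p-1}$ by treating the $2$-part and the $l_1^{r_1}$-part separately. The differences lie in how each divisibility is established. For the $l_1$-part, the paper simply proves $(p-1,N_0)=1$ (if $l_1\mid p-1$ then $p^{l_1^{r_1-1}}\equiv 1\pmod{l_1^{r_1}}$, contradicting that $p$ is a primitive root modulo $N_0$), which together with $l_1^{r_1}\mid q-1$ gives the required valuation; this is exactly the alternative you sketch at the end, and it is shorter than your main argument via the enumeration $\{p^0,\dots,p^{f-1}\}=(\mathbb{Z}/l_1^{r_1}\mathbb{Z})^*$ and the identity $\sum_{(a,l_1)=1}a=\tfrac{l_1^{r_1}\varphi(l_1^{r_1})}{2}$ (which is nonetheless valid). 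For the $2$-part, the paper writes $p=2k+1$ with $k$ odd and expands $(1+2k)^f$ binomially, using $4\mid f$ to conclude $\frac{q-1}{p-1}\equiv\binom{f}{2}2k+f\equiv 0\pmod 4$; your observation that $p\equiv-1\pmod 4$ makes the geometric series satisfy $\sum_{i=0}^{f-1}p^i\equiv\sum_{i=0}^{f-1}(-1)^i=0\pmod 4$ is cleaner and needs only that $f$ is even, not that $4\mid f$. So your version trades the paper's heavier $2$-adic computation for a lighter one; with your alternative for the $l_1$-part, your proof essentially coincides with the paper's.
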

 \begin{proof}
We claim that $(p-1,N_0)=1$. Otherwise, $l_1\mid p-1\ \Rightarrow\
p{\equiv}1\pmod{l_1}\ \Rightarrow\
p^{l_1^{r_1-1}}=1\pmod{l_1^{r_1}=N_0}$. This is contradict to
$p{\equiv}g_1\pmod{l_1}$.

By Lemma \ref{lem0-2}, the order of $\chi\big|_{\mathbb{F}_p}$ is
$$\dfrac{N}{(N,\frac{q-1}{p-1})}=\dfrac{4l_1^{r_1}}{(4l_1^{r_1},\frac{4l_1^{r_1} n}{p-1})}=\dfrac{4}{(4,\frac{4n}{p-1})}.$$
Let $p=2k+1$ where $k\geqslant{}1$ is odd integer, for
$p{\equiv}3\pmod4$. Since $4|f$, we have that
$$\frac{q-1}{p-1}=\frac{p^{f}-1}{p-1}=\frac{(2k)^f+\binom{f}{f-1}(2k)^{f-1}+\cdots+\binom{f}{1}2k}{2k}{\equiv}\binom{f}{2}2k+f{\equiv}0\pmod4.$$
Then $(4,\frac{q-1}{p-1})=(4,\frac{4n}{p-1})=4$ and the lemma has
been proved.
 \end{proof}

Let $R_2$ and $\overline{R}_2$ respectively denote the quadratic
residue set and quadratic non-residue set in group
$(\mathbb{Z}/N\mathbb{Z})^*$, and take
$$\begin{array}{ll}R_2^{(1)}=\{x\in R_2\mid x{\equiv}1\pmod4\}\quad& R_2^{(3)}=\{x\in R_2\mid x{\equiv}3\pmod4\}\\
  \overline{R}_2^{(1)}=\{x\in \overline{R}_2\mid x{\equiv}1\pmod4\}\quad& \overline{R}_2^{(3)}=\{x\in \overline{R}_2\mid x{\equiv}3\pmod4\}.\end{array}$$
Then $<{p}>=R_2^{(1)}\cup \overline{R}_2^{(3)}$,
$g_0<{p}>=g_1<{p}>=R_2^{(3)}\cup \overline{R}_2^{(1)}$. We define a
isomorphic mapping between $(\mathbb{Z}/N\mathbb{Z})^*$ and
$(\mathbb{Z}/N_0\mathbb{Z})^*$ as
\begin{equation}\label{(d-2)}\begin{array}{crcl}\Phi:&(\mathbb{Z}/N\mathbb{Z})^*&\stackrel{\sim}{\longrightarrow}&(\mathbb{Z}/N_0\mathbb{Z})^*\\
 &x&\longmapsto&x\pmod{N_0}\\
 \Phi^{-1}:&y+\frac{1+(-1)^y}{2}&\longleftarrow\hspace{-0.7mm}\shortmid&y.\end{array}\end{equation}
Considering the group isomorphism
$$(\mathbb{Z}/N\mathbb{Z})^*\cong\{\pm1\}\times(\mathbb{Z}/2N_0\mathbb{Z})^*\cong(\mathbb{Z}/2N_0\mathbb{Z})^*\cup[(\mathbb{Z}/2N_0\mathbb{Z})^*+2N_0],$$
each element $s$ in $(\mathbb{Z}/N\mathbb{Z})^*$ can be viewed as
 \begin{eqnarray}\label{(f-1)}s&=&s_0+\frac{(-1)^{s_0}+1}{2}l_1^{r_1}+2l_1^{r_1}\cdot j\\
    &=&s_0+\left[\frac{(-1)^{s_0}+1}{2}+2j\right]l_1^{r_1}\qquad(\mbox{where\ }s_0\in(\mathbb{Z}/N_0\mathbb{Z})^*,j=0,1). \nonumber\end{eqnarray}
So, we find that, when $j=0$,
$(\mathbb{Z}/2N_0\mathbb{Z})^*=R_2^{(1)}\cup \overline{R}_2^{(3)}$,
and when $j=1$, $(\mathbb{Z}/2N_0\mathbb{Z})^*+2N_0=R_2^{(3)}\cup
\overline{R}_2^{(1)}$. Then
$$<{p}>=(\mathbb{Z}/2N_0\mathbb{Z})^*,\qquad g_0<{p}>=g_1<{p}>=(\mathbb{Z}/2N_0\mathbb{Z})^*+2N_0.$$
By Stickelberger's Theorem,
$$(G(\chi))O_K=\wp^{Ib_0+\sigma_{-1}b_1}\quad(\mbox{where\ ~}b_0=\frac{1}{N}\sum\limits_{s\in(\mathbb{Z}/N\mathbb{Z})^*\atop s\in<{p}>}s,\;\;b_1=\frac{1}{N}\sum\limits_{s\in(\mathbb{Z}/N\mathbb{Z})^*\atop s\in-<{p}>}s).$$
So,
$$\begin{array}{rl}b_0&=\frac{1}{N}\sum\limits_{s\in(\mathbb{Z}/N\mathbb{Z})^*\atop
s\in<{p}>}s=\frac{1}{N}\sum\limits_{s\in(\mathbb{Z}/2N_0\mathbb{Z})^*}s=\frac{1}{N}\sum\limits_{s_0\in(\mathbb{Z}/N_0\mathbb{Z})^*}(s_0+\frac{(-1)^{s_0}+1}{2}l_1^{r_1})\\[7mm]
  &=\frac{1}{N}\left[\sum\limits_{y=0}^{l_1^{r_1-1}-1}\sum\limits_{x=1}^{l_1-1}(x+l_1y)+\frac{f}{2}l_1^{r_1}\right]=\frac{1}{N}[\frac{l_1^{r_1}}{2}f+\frac{l_1^{r_1}}{2}f]=\frac{f}{4}\\[7mm]
  b_1&=\frac{1}{N}\sum\limits_{s\in(\mathbb{Z}/N\mathbb{Z})^*\atop s\in-<{p}>}s=\frac{1}{N}\sum\limits_{s\in(\mathbb{Z}/2N_0\mathbb{Z})^*}(s+2l_1^{r_1})=\frac{3}{4}f.\end{array}$$
Thus,
$(G(\chi))=\wp^{\frac{1}{4}}\overline{\wp}^{\frac{3}{4}f}=p^{\frac{f}{4}}\overline{\wp}^{\frac{f}{2}}$.
Ideal $\overline{\wp}^{\frac{f}{2}}$ is principal ideal and we
assume that $\overline{\wp}^{\frac{f}{2}}=(a+b\sqrt{-l_1})$, where
$a,b\in\mathbb{Z}$ are determined by $a^2+b^2l_1=p^{\frac{f}{2}},\
p\nmid b$. Then
$$G(\chi)=\varepsilon p^{\frac{f}{4}}(a+b\sqrt{-l_1}).$$
As we known, $\varepsilon$ is a unit root in $K$. Since
$l_1\neq1,3$, $\varepsilon=\pm1$. It means that we need to determine
the sign of integer $a$. Considering the $l_1^{r_1}$ power of
$G(\chi)$, if we let $\varepsilon=1$, then
$$G^{l_1^{r_1}}(\chi){\equiv}\left\{\begin{array}{l}\sum\limits_{x}\chi^{l_1^{r_1}}(x)\zeta_p^{l_1^{r_1} T(x)}{\equiv}\overline{\chi}^{l_1^{r_1}}(l_1^{r_1})G(\chi^{l_1^{r_1}}){\equiv}-p^{\frac{f}{2}}\pmod{l_1}\\
  ap^{\frac{f}{4}}\pmod{\sqrt{-l_1}}.\end{array}\right.$$
Therefore,
$$a{\equiv}-p^{\frac{f}{4}}\pmod{l_1}.$$

\vskip2mm \underline{\textbf{Case F2}}.\quad
$a_0=2,\;a_1=1\;p{\equiv}g_0^2g_1\pmod{N}$, i.e. $p{\equiv}1\pmod4$
and $p{\equiv}g_1\pmod{N_0}$, $K=\mathbb{Q}(\sqrt{-1})$.
$f=\varphi({N})/2=\varphi(N_0){\equiv}\left\{\begin{array}{ll}2\pmod4&\mbox{if
$l_1{\equiv}3\pmod4$;}\\0\pmod4&\mbox{if
$l_1{\equiv}1\pmod4$.}\end{array}\right.$ Similar as Lemma
\ref{lemd1}, we have $(p-1,N_0)=1,\;(p-1,4)=4$ and the order of
$\chi\big|_{\mathbb{F}_p}$ is
$$\frac{N}{(N,\frac{q-1}{p-1})}=\left\{\begin{array}{ll}2\pmod4&\mbox{if $l_1{\equiv}3\pmod4$;}\\1\pmod4&\mbox{if $l_1{\equiv}1\pmod4$.}\end{array}\right.$$

\underline{If $l_1{\equiv}1\pmod4$}, $G(\chi)\in O_K$, by
Stickelberger's Theorem, we have
$$(G(\chi))O_K=\wp^{Ib_0+\sigma_{-1}b_1},$$
where
$$\begin{array}{rl}b_0&=\frac{1}{N}\sum\limits_{s\in(\mathbb{Z}/N\mathbb{Z})^*\atop s\in<{p}>}s=\frac{1}{N}\sum\limits_{s{\equiv}1({\,\rm mod\,}4)}s,\\
         b_1&=\frac{1}{N}\sum\limits_{s\in(\mathbb{Z}/N\mathbb{Z})^*\atop s\in-<{p}>}s=\frac{1}{N}\sum\limits_{s{\equiv}3({\,\rm mod\,}4)}s.\end{array}$$
As we known, $b_0+b_1=f$. Next, we calculate $b_0-b_1$. Similar as
Case F1, we consider
$$\xymatrix{(\mathbb{Z}/N_0\mathbb{Z})^*\ar[r]_{1:1}^{\Phi_1}&(\mathbb{Z}/2N_0\mathbb{Z})^*\ar[r]_{1:2}^{\Phi_2}&(\mathbb{Z}/N\mathbb{Z})^*},$$
where $\Phi_1,\ \Phi_2$  are, respectively, defined by (\ref{(d-2)})
and (\ref{(f-1)}). For the pairs $(s_0,l_1^{r_1}-s_0)\ \
(s_0\in(\mathbb{Z}/N_0\mathbb{Z})^*)$, we find that
$\Phi_1(s_0)+\Phi_1(l_1^{r_1}-s_0)$ are always equal to
$l_1+l_1^{r_1}$, and
$\Phi_1(s_0)-\Phi_1(l_1^{r_1}-s_0){\equiv}0\pmod4$. Since
$\#\{s\in(\mathbb{Z}/2N_0\mathbb{Z})^*\mid
s{\equiv}1\pmod4\}=\#\{s\in(\mathbb{Z}/2N_0\mathbb{Z})^*\mid
s{\equiv}3\pmod4\}$,
$$b_0-b_1=\sum\limits_{s\in(\mathbb{Z}/N\mathbb{Z})^*}s\lf{-1}{s}=0.$$
Therefore
$$G(\chi)=\varepsilon p^{\frac{f}{2}},$$
where $\varepsilon$ is a unit root in $O_K$, i.e.
$\varepsilon\in\{\pm1,\pm{i}\}\ \ ({i}=\sqrt{-1})$, which can be
determined by Stickelberger congruence \cite[\S11.3]{B-E-W}. (The
detailed discussing of how to determine the sign or unit root
ambiguities for Gauss sums is given by \cite{Y4}.)

\underline{If $l_1{\equiv}3\pmod4$},
$G(\chi)=\left(\sum\limits_{T(x)=1}\chi(x)\right)G(\chi\big|_{\mathbb{F}_p})=\left(\sum\limits_{T(x)=1}\chi(x)\right)\sqrt{p}\in
O_K[\zeta_p]$. By Stickelberger's Theorem, we have
$$(G(\chi))O_M=\mathfrak{P}^{b_0I+b_1\sigma_{-1}}\quad(\mbox{where~}b_0=\frac{p-1}{N}\sum\limits_{s\in(\mathbb{Z}/N\mathbb{Z})^*\atop s{\equiv}1{\rm\,(mod\,}4)}s,\ b_1=\frac{p-1}{N}\sum\limits_{s\in(\mathbb{Z}/N\mathbb{Z})^*\atop s{\equiv}3{\rm\,(mod\,}4)}s).$$
Take $d_1:=\#\{s\in(\mathbb{Z}/2N_0\mathbb{Z})^*\mid
s{\equiv}1\pmod4\},\ d_3:=\#\{s\in(\mathbb{Z}/2N_0\mathbb{Z})^*\mid
s{\equiv}3\pmod4\}$, Similarly by the group maps $\Phi_1,\ \Phi_2$,
$$\begin{array}{rl}
 b_0&=\frac{p-1}{N}[\sum\limits_{s\in(\mathbb{Z}/2N_0\mathbb{Z})^*\atop s{\equiv}1{\rm\,(mod\,}4)}s+\sum\limits_{s\in(\mathbb{Z}/2N_0\mathbb{Z})^*\atop
 s{\equiv}3{\rm\,(mod\,}4)}(s+2l_1^{r_1})]=\frac{p-1}{N}[l_1^{r_1} f+2l_1^{r_1} d_3]\\
 b_1&=\frac{p-1}{N}[\sum\limits_{s\in(\mathbb{Z}/2N_0\mathbb{Z})^*\atop s{\equiv}3{\rm\,(mod\,}4)}s+\sum\limits_{s\in(\mathbb{Z}/2N_0\mathbb{Z})^*\atop
 s{\equiv}1{\rm\,(mod\,}4)}(s+2l_1^{r_1})]=\frac{p-1}{N}[l_1^{r_1} f+2l_1^{r_1} d_1]
 \end{array}$$

\begin{lem}\label{lemf2} In Case F2, if $l_1{\equiv}3\pmod4$, then
 $$d_1=\frac{\varphi(l_1^{r_1})}{2}+1,\quad d_3=\frac{\varphi(l_1^{r_1})}{2}-1,$$
i.e. there are $\frac{\varphi(l_1^{r_1})}{2}+1$ elements $s$ in
$(\mathbb{Z}/2N_0\mathbb{Z})^*$ such that $s{\equiv}1\pmod4$, and
$\frac{\varphi(l_1^{r_1})}{2}-1$ elements $s$ in
$(\mathbb{Z}/2N_0\mathbb{Z})^*$ such that $s{\equiv}3\pmod4$.
\end{lem}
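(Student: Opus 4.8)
The plan is to reduce the entire lemma to the single identity $d_1 - d_3 = 2$. First I would record the easy half: since $N_0 = l_1^{r_1}$ is odd, reduction modulo $N_0$ is an isomorphism $(\mathbb{Z}/2N_0\mathbb{Z})^* \cong (\mathbb{Z}/N_0\mathbb{Z})^*$, so the two counts sum to $d_1 + d_3 = \varphi(2N_0) = \varphi(l_1^{r_1})$. Here I interpret each unit by its representative in $[1,2N_0]$, consistently with the maps $\Phi_1,\Phi_2$ of (\ref{(d-2)}) and (\ref{(f-1)}); this is legitimate because $4 \nmid 2N_0$, so the class mod $4$ is fixed by this choice of representatives. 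Granting $d_1 - d_3 = 2$, the result then follows immediately by solving the linear system $d_1 + d_3 = \varphi(l_1^{r_1})$, $d_1 - d_3 = 2$.

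Next I would express the difference as a character sum. For odd $s$ one has $\lf{-1}{s} = (-1)^{(s-1)/2}$, which is $+1$ exactly when $s \equiv 1 \pmod 4$ and $-1$ when $s \equiv 3 \pmod 4$, so
$$d_1 - d_3 = \sum_{s \in (\mathbb{Z}/2N_0\mathbb{Z})^*} \lf{-1}{s}.$$
The units modulo $2N_0$ are precisely the odd integers in $[1,2N_0]$ not divisible by $l_1$ (coprimality to $l_1^{r_1}$ being the same as coprimality to the prime $l_1$). I would therefore split the sum by inclusion--exclusion over divisibility by $l_1$, writing $d_1 - d_3 = S_1 - S_2$, where $S_1$ is the sum of $\lf{-1}{s}$ over all odd $s \in [1,2N_0-1]$, and $S_2$ is the sum over the odd multiples of $l_1$ in the same range.

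Finally I evaluate the two pieces. Substituting $s = 2j-1$ turns $S_1$ into $\sum_{j=1}^{N_0}(-1)^{j-1}$, an alternating sum of $N_0$ terms; as $N_0 = l_1^{r_1}$ is odd, $S_1 = 1$. For $S_2$ I write $s = l_1 u$ with $u$ odd and $1 \le u \le 2l_1^{r_1-1}$, and use complete multiplicativity of $s \mapsto (-1)^{(s-1)/2}$ together with $l_1 \equiv 3 \pmod 4$ to factor $\lf{-1}{l_1 u} = \lf{-1}{l_1}\lf{-1}{u} = -\lf{-1}{u}$. The remaining alternating sum over $u$ has $l_1^{r_1-1}$ terms, again odd in number, and equals $1$, so $S_2 = -1$. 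Hence $d_1 - d_3 = 1 - (-1) = 2$, completing the argument. The only delicate point is the bookkeeping: pinning down the exact ranges and the counts of odd representatives, and correctly tracking the sign contributed by $l_1 \equiv 3 \pmod 4$. It is exactly this sign (which would be trivial if $l_1 \equiv 1 \pmod 4$) that forces the imbalance $\pm 1$, in agreement with the sample $l_1 = 7$, where the units $\{1,3,5,9,11,13\}$ modulo $14$ split as $d_1 = 4$, $d_3 = 2$.
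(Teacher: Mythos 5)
Your proposal is correct and takes essentially the same route as the paper: the paper likewise decomposes the units of $\mathbb{Z}/2N_0\mathbb{Z}$ by inclusion--exclusion as the odd integers in $[1,2N_0-1]$ minus the odd multiples of $l_1$, and the sign flip you derive from multiplicativity, $\lf{-1}{l_1u}=-\lf{-1}{u}$, is exactly the paper's observation that $l_1(2k-1)\equiv 2k+1\pmod 4$. The only difference is organizational: the paper counts both residue classes in each of the two sets directly, whereas you encode the same counts in the character sum $d_1-d_3=\sum_{s}(-1)^{(s-1)/2}$ and then solve the linear system together with $d_1+d_3=\varphi(l_1^{r_1})$.
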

 \begin{proof}
 $\#(\mathbb{Z}/2N_0\mathbb{Z})^*=\varphi(l_1^{r_1})=(l_1-1)l_1^{r_1-1}$. Suppose that
$$S_1=\{\mbox{all the odd numbers from 1 to $2l_1^{r_1}-1$}\}.$$
Since $2l_1^{r_1}-1{\equiv}2(-1)^{r_1}-1{\equiv}1\pmod4$, there are
$\frac{l_1^{r_1}+1}{2}$ elements $s$ in $S_1$ such that
$s{\equiv}1\pmod4$, and $\frac{l_1^{r_1}-1}{2}$ elements $s$ such
that $s{\equiv}3\pmod4$.

Next, we consider  set $S_2=\{s\in S_1:l_1|s\}$. $\forall x\in S_2$,
$x$ must be the form as $x=l_1(2k-1)$, where
$k=1,\cdots,l_1^{r_1-1}$. Since
 $$x{\equiv}(-1)(2k-1){\equiv}2k+1{\equiv}\left\{\begin{array}{ll}3\pmod4,&\mbox{if $~k$\ is odd;}\\1\pmod4&\mbox{if $~k$ is even.}\end{array}\right.$$
Then there are $\frac{l_1^{r_1-1}-1}{2}$ elements $s$ in $S_2$ such
that $s{\equiv}1\pmod4$, and $\frac{l_1^{r_1-1}+1}{2}$ elements $s$
such that $s{\equiv}3\pmod4$.

Finally, since $(\mathbb{Z}/2N_0\mathbb{Z})^*=S_1\big\backslash
S_2$, the lemma has been proved.
 \end{proof}

By Lemma\ref{lemf2},
$$\left\{\begin{array}{rl}b_0+b_1&=\frac{p-1}{N}(2l_1^{r_1} f+2l_1^{r_1}(d_1+d_3))=(p-1)f\\b_0-b_1&=\frac{p-1}{N}2l_1^{r_1}(d_3-d_1)=-(p-1)\end{array}\right.~\Rightarrow~\left\{\begin{array}{l}b_0=\frac{f-1}{2}(p-1)\\b_1=\frac{f+1}{2}(p-1)\end{array}\right.$$
Then
$$(\frac{G(\chi)}{\sqrt{p}})=p^{\frac{f-2}{2}}\overline{\mathfrak{P}}^{p-1}=p^{\frac{f-2}{2}}\overline{\wp}\in O_K.$$
Assume that $\wp=(a+b\sqrt{-1})$, where $a,b\in\mathbb{Z}$ such that
$a^2+b^2=p,\ p\nmid b$. Thus,
$$G(\chi)=\varepsilon \sqrt{p}p^{\frac{f}{2}-1}(a+b\sqrt{-1}),$$
where $\varepsilon\in\{\pm1,\pm i\}$ can  be determined by
Stickelberger congruence. For more details, one can refer to
\cite{Y4}.

\vskip2mm \underline{\textbf{Case F3}}.\quad $a_0=1,\ a_1=2$,
$p{\equiv}g_0g_1^2\pmod{N}$, i.e. $p{\equiv}3\pmod4$ and
$p{\equiv}g_1^2\pmod{N_0}$, $3\neq l_1{\equiv}3\pmod4$,
$K=\mathbb{Q}(\sqrt{-l_1})$.

Here $f=\varphi(N)/2=\varphi(N_0)=(l_1-1)l_1^{r_1-1}{\equiv}2\pmod4$
and $(p-1,l_1^{r_1})=1$, then the order of
$\chi\big|_{\mathbb{F}_p}$ is $\frac{N}{(N,\frac{q-1}{p-1})}=1$. By
Stickelberger's Theorem, $(G(\chi))O_K=\wp^{b_0I+b_1\sigma_{-1}}$,
where
$$\begin{array}{rl}
 b_0&=\frac{1}{N}\sum\limits_{s\in(\mathbb{Z}/N\mathbb{Z})^*\atop s\in<p>}s=\frac{1}{N}[2\sum\limits_{s_0\in(\mathbb{Z}/N_0\mathbb{Z})^*\atop\lf{s_0}{l_1}=1}(s_0+\frac{(-1)^{s_0}+1}{2}l_1^{r_1})+2l_1^{r_1}\frac{\varphi(l_1^{r_1})}{2}],\\
 b_1&=\frac{1}{N}\sum\limits_{s\in(\mathbb{Z}/N\mathbb{Z})^*\atop s\in-<p>}s=\frac{1}{N}[2\sum\limits_{s_0\in(\mathbb{Z}/N_0\mathbb{Z})^*\atop\lf{s_0}{l_1}=-1}(s_0+\frac{(-1)^{s_0}+1}{2}l_1^{r_1})+2l_1^{r_1}\frac{\varphi(l_1^{r_1})}{2}]. \end{array}$$
Then
 $$\begin{array}{rl}b_0+b_1&=\frac{1}{N}4l_1^{r_1}\varphi(l_1^{r_1})=f,\\[5mm]
  b_0-b_1&=\frac{1}{N}\left[-2h_1l_1^{r_1}+l_1^{r_1}\sum\limits_{s_0\in(\mathbb{Z}/N_0\mathbb{Z})^*}(-1)^{s_0}\lf{s_0}{l_1}\right]\\
  &=\left\{\begin{array}{ll}-\frac{h_1}{2}+\frac{1}{4}2h_1=0&\mbox{if~}l_1{\equiv}7\pmod8;\\-\frac{h_1}{2}-\frac{1}{4}6h_1=-2h_1&\mbox{if~}l_1{\equiv}3\pmod8,\end{array}\right. \end{array}$$
where the last equation, above, is by the formula on the class
number of imaginary quadratic field $\mathbb{Q}(\sqrt{-l_1})$.

\underline{If $l_1{\equiv}7\pmod8$}, $G(\chi)=\varepsilon
p^{\frac{f}{2}}$, where $\varepsilon\in\{\pm1\}$. We can determine
$\varepsilon$ by the congruences below
$$G^{l_1^{r_1}}(\chi){\equiv}\left\{\begin{array}{l}\overline{\chi}^{l_1^{r_1}}(l_1^{r_1})G(\chi^{l_1^{r_1}}){\equiv}(-1)^{\frac{p+1}{4}}p^{\frac{f}{2}}\pmod{l_1}\\
  \varepsilon p^{\frac{f}{2}}\pmod{{-l_1}}.\end{array}\right.$$
Hence, $\varepsilon=(-1)^{\frac{p+1}{4}}$,
$G(\chi)=(-1)^{\frac{p+1}{4}}p^{\frac{f}{2}}$.

\underline{If $l_1{\equiv}3\pmod8$}, we have $G(\chi)=\varepsilon
p^{\frac{f}{2}-h_1}(\frac{a+b\sqrt{-l_1}}{2})^2$ by the result in
Case A, where integers $a,b$ are determined by equations
(\ref{(aa)}). And $\varepsilon$ can be determined by the congruences
below
$$G^{l_1^{r_1}}(\chi){\equiv}\left\{\begin{array}{l}\overline{\chi}^{l_1^{r_1}}(l_1^{r_1})G(\chi^{l_1^{r_1}}){\equiv}(-1)^{\frac{p+1}{4}}p^{\frac{f}{2}}\pmod{l_1}\\
  \varepsilon p^{\frac{f}{2}-h_1}\frac{a^2}{4}\pmod{\sqrt{-l_1}}\end{array}\right.$$
Hence, $\varepsilon{\equiv}(-1)^{\frac{p+1}{4}}\pmod{l_1}$.

\vskip2mm In conclusion, we obtain that
 \begin{thm}\label{thm-f}
Let $N=4l_1^{r_1}$, $[(\mathbb{Z}/N\mathbb{Z})^*:<{p}>]=2$, i.e.
$f=\frac{\varphi(N)}{2}$. Take $q=p^f$, $\chi$ be a multiplicative
character of order $N$ over $\mathbb{F}_q$ and $h_1$ be the ideal
class number of $\mathbb{Q}(\sqrt{-l_1})$. Assume that the orders of
$p$ in group $(\mathbb{Z}/4\mathbb{Z})^*$ and
$(\mathbb{Z}/l_1^{r_1}\mathbb{Z})^*$ are respectively $2/a_0$ and
$\varphi(l_1^{r_1})/a_1$. Then
 \par
(i).~For Case F1, ($a_0=a_1=1,\ p{\equiv}3\pmod4,\
l_1{\equiv}1\pmod4$)
$$G(\chi)=p^{\frac{f}{4}}(a'+b'\sqrt{-l_1}),$$
where integers $a',b'$ are determined by equations
\begin{equation}\label{(f-3)}\left\{\begin{array}{l}(a')^2+l_1(b')^2=p^{\frac{f}{2}};\\a'{\equiv}-p^{\frac{f}{4}}\pmod{l_1}.\end{array}\right.\end{equation}
\par
(ii).~For Case F2, (($a_0=2,\ a_1=1,\ p{\equiv}1\pmod4$))
$$G(\chi)=\left\{\begin{array}{ll}\varepsilon p^{\frac{f}{2}}&\mbox{if $l_1{\equiv}1\pmod4$;}\\
 \varepsilon \sqrt{p}p^{\frac{f}{2}-1}(a''+b''\sqrt{-1})&\mbox{if $l_1{\equiv}3\pmod4$,}  \end{array}\right.$$
where $\varepsilon\in\{\pm1,\pm{i}\}$ can be determined by
Stickelberger congruence and integers $|a''|,|b''|$ are determined
by $(a'')^2+(b'')^2=p$.
\par
(iii).~For Case F3, ($a_0=1,\ a_1=2,\ p{\equiv}3\pmod4,\
l_1{\equiv}3\pmod4$)
$$G(\chi)=\left\{\begin{array}{ll}(-1)^{\frac{p+1}{4}}p^{\frac{f}{2}}&\mbox{if $l_1{\equiv}7\pmod8$;}\\
 (-1)^{\frac{p+1}{4}}p^{\frac{f}{2}-h_1}(a+b\frac{-1+\sqrt{-l_1}}{2})&\mbox{if $l_1{\equiv}3\pmod8$,}  \end{array}\right.$$
where integers $a,b$ are determined by equations (\ref{(aa)}).
 \end{thm}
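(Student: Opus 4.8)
The plan is to prove Theorem~\ref{thm-f} by treating each of the three subcases F1, F2, F3 separately, in each case first pinning down the prime ideal factorization of $(G(\chi))$ via Stickelberger's Theorem, and then resolving the residual unit-root (or sign) ambiguity by an explicit congruence argument. The common skeleton is: (1) compute the order of the restricted character $\chi|_{\mathbb{F}_p}$ using Lemma~\ref{lem0-2}, which dictates whether $G(\chi)$ lives in $O_K$ or only in $O_K[\zeta_p]$; (2) compute the Stickelberger exponents $b_0,b_1$ attached to the prime $\wp$ (resp.\ $\mathfrak{P}$) above $p$; (3) identify the resulting principal ideal with an explicit element $a+b\sqrt{-l_1}$ (resp.\ $a+b\sqrt{-1}$) of the quadratic field $K$; and (4) fix the remaining unit $\varepsilon$.

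\textbf{Carrying out the ideal computation.}
For Case~F1, since $l_1\equiv1\pmod4$ Lemma~\ref{lemd1} gives that $\chi|_{\mathbb{F}_p}$ is trivial, so $G(\chi)\in O_K$ directly and no $\zeta_p$-factor intervenes. I would exploit the decomposition $(\mathbb{Z}/N\mathbb{Z})^*\cong\{\pm1\}\times(\mathbb{Z}/2N_0\mathbb{Z})^*$ together with the splitting of each element as in (\ref{(f-1)}) to evaluate $b_0$ and $b_1$ by summing over $(\mathbb{Z}/2N_0\mathbb{Z})^*$; the arithmetic collapses to $b_0=f/4$ and $b_1=3f/4$, whence $(G(\chi))=\wp^{f/4}\overline{\wp}^{3f/4}=p^{f/4}\overline{\wp}^{f/2}$ and $\overline{\wp}^{f/2}$ is principal. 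For Case~F2 with $l_1\equiv1\pmod4$ one needs only $b_0-b_1$, and the pairing $s_0\leftrightarrow l_1^{r_1}-s_0$ forces $b_0-b_1=0$, giving $G(\chi)=\varepsilon p^{f/2}$; for $l_1\equiv3\pmod4$ the character $\chi|_{\mathbb{F}_p}$ is quadratic, so $G(\chi)=\bigl(\sum_{T(x)=1}\chi(x)\bigr)\sqrt{p}$ and I must count residues mod~$4$ inside $(\mathbb{Z}/2N_0\mathbb{Z})^*$, which is exactly the content of Lemma~\ref{lemf2}; the asymmetry $d_1-d_3=2$ produces $b_0=\tfrac{f-1}{2}(p-1)$, $b_1=\tfrac{f+1}{2}(p-1)$ and hence $(G(\chi)/\sqrt{p})=p^{(f-2)/2}\overline{\wp}$. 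Case~F3 is parallel, using $\bigl(\tfrac{s_0}{l_1}\bigr)$ to separate $\langle p\rangle$ from $-\langle p\rangle$; here $b_0-b_1$ is governed by the sum $\sum_{s_0}(-1)^{s_0}\bigl(\tfrac{s_0}{l_1}\bigr)$, which I would evaluate via the analytic class-number formula for $\mathbb{Q}(\sqrt{-l_1})$, yielding $0$ when $l_1\equiv7\pmod8$ and $-2h_1$ when $l_1\equiv3\pmod8$.

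\textbf{Resolving the unit ambiguity.}
Once the ideal is known, $G(\chi)=\varepsilon\cdot(\text{explicit generator})$ with $\varepsilon$ a root of unity in $K$; since $l_1\neq1,3$ the only units in $\mathbb{Q}(\sqrt{-l_1})$ are $\pm1$, so for F1 and F3 I must only determine a sign. The device is to raise $G(\chi)$ to the $l_1^{r_1}$-th power and reduce modulo a prime above $l_1$: on one side Stickelberger's congruence (or the Davenport--Hasse relation) evaluates $G^{l_1^{r_1}}(\chi)\equiv\overline{\chi}^{l_1^{r_1}}(l_1^{r_1})G(\chi^{l_1^{r_1}})$ explicitly mod~$l_1$, while on the other side the proposed formula reduces to $\varepsilon$ times a known power of $p$ mod~$\sqrt{-l_1}$; comparing the two fixes $\varepsilon$, giving $a'\equiv-p^{f/4}\pmod{l_1}$ in F1 and $\varepsilon=(-1)^{(p+1)/4}$ in F3. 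For F2 the field is $\mathbb{Q}(i)$, whose unit group is $\{\pm1,\pm i\}$, so the sign congruence above is insufficient to isolate a fourth root of unity; I would therefore invoke the full Stickelberger congruence (as recorded in \cite[\S11.3]{B-E-W} and systematized in \cite{Y4}) and leave $\varepsilon\in\{\pm1,\pm i\}$ as the stated indeterminate.

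\textbf{The main obstacle.}
I expect the principal difficulty to be the combinatorial residue counts underlying $b_0-b_1$ — especially Case~F3, where the sign of $b_0-b_1$ hinges on rewriting $\sum_{s_0}(-1)^{s_0}\bigl(\tfrac{s_0}{l_1}\bigr)$ as a multiple of the class number $h_1$ and correctly matching the $l_1\equiv3,7\pmod8$ dichotomy to the factor $2$ versus $0$. The residue-mod-$4$ bookkeeping in Lemma~\ref{lemf2} and the consistent identification of which coset of $\langle p\rangle$ each element of $(\mathbb{Z}/2N_0\mathbb{Z})^*$ lands in are error-prone, and getting the class-number evaluation to come out with the right normalization is the crux; the subsequent unit-root determination, by contrast, is a routine congruence check modulo $\sqrt{-l_1}$.
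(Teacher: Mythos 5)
Your proposal is correct and follows essentially the same route as the paper's own proof: the identical case-by-case Stickelberger computation (Lemma~\ref{lemd1} and the $\{\pm1\}\times(\mathbb{Z}/2N_0\mathbb{Z})^*$ decomposition for F1, the pairing argument and the residue counts of Lemma~\ref{lemf2} for F2, and the class-number evaluation of $b_0-b_1$ via $\sum_{s_0}(-1)^{s_0}\left(\frac{s_0}{l_1}\right)$ for F3), followed by the same $l_1^{r_1}$-th power congruence modulo a prime above $l_1$ to fix the sign in F1/F3, with the quartic unit in F2 left to the Stickelberger congruence. There is nothing to add or correct.
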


%因为在计算情形F下Gauss和的过程中，使用到Stickelberger同余式，所以关于情形F的具体数值实例，我们将在下一章中进一步讨论Stickelberger同余式后给出。
\vskip1cm

\subsection{Explicit evaluation of Gauss sums $G(\chi^\lambda)\quad(1\leqslant{}\lambda<N)$}

In this section, we give explicit evaluation of a series Gauss sums
$G(\chi^\lambda)\quad(1\leqslant{}\lambda<N)$, by the results given
in section 4.1. Our main mathematical tool is Davenport-Hasse
lifting formula, and we omit some similar proofs for simplicity.

\vskip2mm{\large \maltese \quad\underline{\textbf{Case
A}}.}\vskip2mm

$N=l_1^{r_1},~3\neq l_1{\equiv}3\pmod4$. For
$1\leqslant{}\lambda\leqslant{}N-1$, let $\lambda=xl_1^t+y$ for some
integer $t<r_1$, where $0\leqslant{}x\leqslant{}l_1-1,\
0\leqslant{}y\leqslant{}l_1-1$ and $(x,y)\neq(0,0)$. If $y\neq0$,
$(\lambda,l_1^{r_1})=1$, then
$$G(\chi^\lambda)=\left\{\begin{array}{ll}G(\chi)&\mbox{if $y\in<{p}>\subset(\mathbb{Z}/l_1\mathbb{Z})^*$;}\\ G(\overline{\chi})=\overline{G(\chi)}&\mbox{if $y\not\in<{p}>\subset(\mathbb{Z}/l_1\mathbb{Z})^*$.}\end{array}\right.$$
If $y=0$, $\lambda=xl^t$, then
$$G(\chi^\lambda)=\left\{\begin{array}{ll}G(\chi^{l^t})&\mbox{if $x\in<{p}>\subset(\mathbb{Z}/l_1\mathbb{Z})^*$;}\\\overline{G(\chi^{l^t})}&\mbox{if $x\not\in<{p}>\subset(\mathbb{Z}/l_1\mathbb{Z})^*$.}\end{array}\right.$$
Thus, the problem owns to the calculation of Gauss sums
$G(\chi^{l^{t}})\ \ (0\leqslant{}t\leqslant{}r_1-1)$.

The order of $\chi^{l^{t}}$ is $\widetilde{N}=l_1^{r_1-t}$ and ${\rm
ord}_{\widetilde{N}}(p)=\frac{\varphi(\widetilde{N})}{2}=\widetilde{f}$.
Let $\eta$ be the corresponding primitive character of order
$\widetilde{N}$ over $\mathbb{F}_{p^{\widetilde{f}}}$, then by
Theorem\ref{r=2-3},
$$G(\eta)=p^{\frac{\widetilde{f}-h_1}{2}}({a}+{b}\sqrt{-l_1})/2,$$
where integer ${a},{b}$ are determined by equations (\ref{(aa)}).
Then by Davenport-Hasse lifting formula, the theorem is given as
follow .

 \begin{thm}[Case A,\cite{La}]\label{thm-a2}
Let $N=l_1^{r_1},~l_1{\equiv}3\pmod4,~l_1\geqslant{}7$,
$[(\mathbb{Z}/N\mathbb{Z})^*:<{p}>]=2$, i.e ${\rm
ord}_N(p)=f=\frac{1}{2}{\varphi(N)}$. Take $q=p^f$, $\chi$ be a
multiplicative character of order $N$ over $\mathbb{F}_q$ and
$h_1=h(\mathbb{Q}(\sqrt{-l_1}))$. Then, for $0\leqslant{}t<r_1$,
$$G(\chi^{l_1^{t}})=p^{\frac{f-h_1l_1^{t}}{2}}\left(\frac{a+b\sqrt{-l_1}}{2}\right)^{l_1^t},$$
where integers $a,b$ are determined by (\ref{(aa)}).
 \end{thm}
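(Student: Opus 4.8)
The plan is to reduce the computation of $G(\chi^{l_1^t})$ to a single base Gauss sum living over a smaller field, and then to invoke the Davenport--Hasse lifting formula to raise it to the appropriate power. First I would record the numerology of the descent. The character $\chi^{l_1^t}$ has exact order $\widetilde N = l_1^{r_1-t}$, and by the discussion preceding the statement $\widetilde f := \operatorname{ord}_{\widetilde N}(p) = \tfrac12\varphi(\widetilde N)$; indeed index $2$ persists at every level, since a primitive root $g_1$ modulo $l_1^{r_1}$ is also a primitive root modulo $l_1^{r_1-t}$, so $p\equiv g_1^2$ remains a non-square generating the unique index-$2$ subgroup. Comparing $f=\tfrac12(l_1-1)l_1^{r_1-1}$ with $\widetilde f=\tfrac12(l_1-1)l_1^{r_1-t-1}$ gives the clean relation $f = l_1^{t}\,\widetilde f$; equivalently $q = p^{f} = \bigl(p^{\widetilde f}\bigr)^{l_1^{t}}$, so $\mathbb{F}_q$ is the degree-$l_1^t$ extension of $\mathbb{F}_{p^{\widetilde f}}$.

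Next I would identify $\chi^{l_1^t}$ with a lifted character. Because $\widetilde N \mid p^{\widetilde f}-1$ by the very definition of $\widetilde f$, the order-$\widetilde N$ character $\chi^{l_1^t}$ is trivial on the kernel of the surjective norm $N = N_{\mathbb{F}_q/\mathbb{F}_{p^{\widetilde f}}}$, hence factors as $\chi^{l_1^t} = \eta\circ N$ for a unique character $\eta$ of $\mathbb{F}_{p^{\widetilde f}}^{*}$, again primitive of order $\widetilde N$. Thus $\chi^{l_1^t}$ is exactly the Davenport--Hasse lift of $\eta$, and the lifting formula (the Davenport--Hasse Lift Theorem cited in Section~2) gives $G(\chi^{l_1^t}) = (-1)^{l_1^{t}-1}\,G(\eta)^{l_1^{t}}$. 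Since $l_1$ is odd, $l_1^{t}$ is odd, the leading sign is $+1$, and so $G(\chi^{l_1^t}) = G(\eta)^{l_1^{t}}$.

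It then remains to insert the base value and simplify exponents. As $\eta$ is primitive of order $\widetilde N = l_1^{r_1-t}$ over $\mathbb{F}_{p^{\widetilde f}}$ with $l_1\equiv 3\pmod 4$ and $l_1\geq 7$, Theorem~\ref{r=2-3} applies and yields $G(\eta) = p^{(\widetilde f - h_1)/2}\bigl(\tfrac{a+b\sqrt{-l_1}}{2}\bigr)$, where $a,b$ solve the system (\ref{(aa)}). Raising to the $l_1^t$ power and using $l_1^{t}\widetilde f = f$ converts the exponent $\tfrac{l_1^{t}(\widetilde f - h_1)}{2}$ into $\tfrac{f - h_1 l_1^{t}}{2}$, producing exactly $G(\chi^{l_1^t}) = p^{(f-h_1 l_1^t)/2}\bigl(\tfrac{a+b\sqrt{-l_1}}{2}\bigr)^{l_1^t}$.

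I expect the one genuinely delicate point to be the sign normalization of the base datum, not the lifting arithmetic. The value $G(\eta)$ from Theorem~\ref{r=2-3} is only pinned down once we fix which of the two Galois orbits $\eta$ lies in, i.e.\ whether the second factor is $\tfrac{a+b\sqrt{-l_1}}{2}$ or its complex conjugate; and the same choice must be made compatibly at every level $\widetilde N = l_1^{r_1-t}$ for the single pair $(a,b)$ in the statement to be legitimate. This is precisely where Remark~\ref{remark1} does the work: the congruence in (\ref{(aa)}) fixes the residue, hence the sign, of $a$ in terms of $l_1$ and $p$ alone, independently of the exponent, so the same $a$ (and, up to the overall conjugation matching that of $\chi$, the same $b$) serves uniformly across all $t$. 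Once this compatibility of the base normalization is spelled out, the remainder of the argument is the routine lifting computation above.
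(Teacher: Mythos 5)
Your proposal is correct and follows essentially the same route as the paper: reduce to the character of exact order $\widetilde{N}=l_1^{r_1-t}$, evaluate the corresponding primitive character $\eta$ over $\mathbb{F}_{p^{\widetilde{f}}}$ via Theorem~\ref{r=2-3}, and lift by the Davenport--Hasse formula, with Remark~\ref{remark1} guaranteeing the same pair $(a,b)$ serves at every level. Your write-up in fact supplies details the paper leaves implicit (the persistence of the index-$2$ condition, the factorization $\chi^{l_1^t}=\eta\circ\mathrm{N}$, and the vanishing of the sign $(-1)^{l_1^t-1}$), but the underlying argument is identical.
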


\vskip2mm{\large \maltese \quad\underline{\textbf{Case
B}}.}\vskip2mm

Here, $N=l_1^{r_1}l_2^{r_2}$, where $l_1,l_2$ are distinct odd
primes and $r_1,r_2\geqslant{}1$. Similarly as Case A, we only
consider the Gauss sums $G(\chi^{l_1^{t_1}l_2^{t_2}}),\
G(\chi^{l_1^{r_1}l_2^{t_2}})$ and $G(\chi^{l_1^{t_1}l_2^{r_2}})\ \
(0\leqslant{}t_1<r_1,0\leqslant{}t_2<r_2$). And the result is given
as follow.

 \begin{thm}[Case B]\label{thm-b2}
Let $N=l_1^{r_1}l_2^{r_2}$, $[(\mathbb{Z}/N\mathbb{Z})^*:<{p}>]=2$,
i.e. ${\rm ord}_N(p)=f=\frac{1}{2}{\varphi(N)}$. Take $q=p^f$,
$\chi$ be a multiplicative character of order $N$ over
$\mathbb{F}_q$. Then,
\par (1). ~For \underline{Case B1}\ ($0\leqslant{}t_1<r_1,\
0\leqslant{}t_2<r_2$),
$$\begin{array}{l}
  G(\chi^{l_1^{t_1}l_2^{t_2}})=p^{\frac{1}{2}(f-h_{12}l_1^{t_1}l_2^{t_2})}\left(\frac{a'+b'\sqrt{-l_1l_2}}{2}\right)^{l_1^{t_1}l_2^{t_2}},\\
  G(\chi^{l_1^{r_1}l_2^{t_2}})=p^{\frac{f}{2}},\\
  G(\chi^{l_1^{t_1}l_2^{r_2}})=-p^{\frac{f}{2}},\\
 \end{array}$$
where $h_{12}=h(\mathbb{Q}(\sqrt{-l_1l_2}))$ and integers $a',b'$
are determined by equations (\ref{(b1)}).
\par (2). ~For \underline{Case B2}\ ($0\leqslant{}t_1<r_1,\
0\leqslant{}t_2<r_2$),
$$\begin{array}{l}
  G(\chi^{l_1^{t_1}l_2^{t_2}})=\left\{\begin{array}{lll} p^{\frac{f}{2}},&\mbox{if}\ \left(\frac{l_2}{l_1}\right)=1;\\varphi^{\frac{f}{2}-h_1l_1^{t_1}l_2^{t_2}}(\frac{a+b\sqrt{-l_1}}{2})^{2l_1^{t_1}l_2^{t_2}},&\mbox{if}\ \left(\frac{l_2}{l_1}\right)=-1,\end{array}\right.\\
  G(\chi^{l_1^{r_1}l_2^{t_2}})=p^{\frac{f}{2}},\\
  G(\chi^{l_1^{t_1}l_2^{r_2}})=-p^{\frac{1}{2}(f-h_1l_1^{t_1}\varphi(l_2^{r_2}))}\left(\frac{a+b\sqrt{-l_1}}{2}\right)^{l_1^{t_1}\varphi(l_2^{r_2})},
 \end{array}$$
where $h_1=h(\mathbb{Q}(\sqrt{-l_1}))$ and integers $a,b$ are
determined by equations (\ref{(aa)}).
\end{thm}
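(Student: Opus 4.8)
The plan is to evaluate each of the three displayed families by descending to the \emph{natural} field of the relevant power of $\chi$ and then lifting back via the Davenport--Hasse formula $-G(\chi')=(-G(\eta))^{f/\widetilde f}$, where $\eta$ denotes the primitive character of the same order living over $\mathbb{F}_{p^{\widetilde f}}$ with $\widetilde f=\mathrm{ord}_{\widetilde N}(p)$ and $\widetilde N$ the order of the power in question. The key simplification is that $f=\tfrac12\varphi(l_1^{r_1})\varphi(l_2^{r_2})$ while $\widetilde f=\tfrac12\varphi(\widetilde N)$ (or $\varphi(\widetilde N)$ in the pure case), so the lifting degree $f/\widetilde f$ telescopes to an explicit monomial in $l_1,l_2$. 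Writing $G(\chi')=(-1)^{f/\widetilde f+1}G(\eta)^{f/\widetilde f}$, the whole argument reduces to (a) identifying the type of $\eta$, (b) quoting the corresponding value from Theorem\ref{r=2-3}, Theorem\ref{r=2-4}, or Lemma\ref{pureG}, and (c) tracking the parity of $f/\widetilde f$. The reduction of a general exponent $\lambda$ to these three families is already the content of the preceding remark, so I only treat the families themselves.

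For the diagonal sums $G(\chi^{l_1^{t_1}l_2^{t_2}})$ the reduced character has order $\widetilde N=l_1^{r_1-t_1}l_2^{r_2-t_2}$, still of index $2$ and of the same subcase, since the residue of $p$ modulo each $l_i$ and the symbol $\lf{l_2}{l_1}$ are unchanged, and the class numbers $h_{12},h_1$ depend only on the discriminants of $\mathbb{Q}(\sqrt{-l_1l_2})$ and $\mathbb{Q}(\sqrt{-l_1})$. Here $f/\widetilde f=l_1^{t_1}l_2^{t_2}$ is \emph{odd}, so $G(\chi^{l_1^{t_1}l_2^{t_2}})=G(\eta)^{l_1^{t_1}l_2^{t_2}}$; substituting $G(\eta)$ from Theorem\ref{r=2-4}(i) (case B1) or (ii) (case B2) and using $l_1^{t_1}l_2^{t_2}\,\widetilde f=f$ to rewrite the exponent of $p$ yields the stated formulas directly.

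The boundary sums require first deciding the type of $\eta$. For $G(\chi^{l_1^{r_1}l_2^{t_2}})$ the order is the prime power $l_2^{r_2-t_2}$; since $a_2=1$ means $p$ is a primitive root modulo $l_2^{r_2}$, hence modulo $l_2^{r_2-t_2}$, we have $-1\in\langle p\rangle$ and $\eta$ is a \emph{pure} sum. Applying Lemma\ref{pureG} with $s=1$, I would observe that the sign exponent $(p^{t}+1)/l_2^{r_2-t_2}$ is even, its numerator being even and its denominator odd, so $G(\eta)=p^{\widetilde f/2}$; since $l_1\equiv3\pmod4$ makes $f/\widetilde f=\tfrac12(l_1-1)l_1^{r_1-1}l_2^{t_2}$ odd, the Davenport--Hasse prefactor is $+1$ and we get $p^{f/2}$ in both B1 and B2. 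The sum $G(\chi^{l_1^{t_1}l_2^{r_2}})$ splits: in B1, $p$ is a primitive root modulo $l_1^{r_1}$ so $\eta$ is again pure, but now $l_2\equiv1\pmod4$ forces $f/\widetilde f=\tfrac12(l_2-1)l_2^{r_2-1}l_1^{t_1}$ to be even, producing the extra sign in $-p^{f/2}$; in B2, $p\equiv g_1^2$ is only a quadratic residue modulo $l_1^{r_1}$ and $l_1\equiv3\pmod4$ makes $-1$ a non-residue, so $\eta$ is an index-$2$ (Case A) sum evaluated by Theorem\ref{r=2-3}, and here $f/\widetilde f=l_1^{t_1}\varphi(l_2^{r_2})$ is even, giving the displayed formula with the leading $-$ and exponent $l_1^{t_1}\varphi(l_2^{r_2})$.

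The main obstacle is precisely the sign bookkeeping of the boundary cases: one must correctly decide, from $l_i\bmod 4$ and from whether $p$ is a primitive root or merely a quadratic residue modulo $l_i$, whether $\eta$ is pure or index $2$, and then combine the two independent sign sources — the Davenport--Hasse prefactor $(-1)^{f/\widetilde f+1}$ and, when $\eta$ is pure, the sign of Lemma\ref{pureG} — whose parities hinge on whether $(l_i-1)/2$ is odd or even. I would isolate this as a short parity lemma (noting $(p^{t}+1)/\widetilde N$ is always even for odd prime-power $\widetilde N$, and recording the parity of each lifting degree) so that the computation need not be repeated across B1, B2 and the three families.
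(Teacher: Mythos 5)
Your proposal is correct and follows essentially the same route as the paper: reduce each exponent family to the primitive character $\eta$ of order $\widetilde N$ over $\mathbb{F}_{p^{\widetilde f}}$, identify its type (same subcase B1/B2 for the diagonal sums, pure for $l_1^{r_1}l_2^{t_2}$, pure resp.\ Case A for $l_1^{t_1}l_2^{r_2}$ in B1 resp.\ B2), and lift by the Davenport--Hasse formula while tracking the parity of $f/\widetilde f$. The paper records exactly this data in two tables and leaves the sign bookkeeping implicit, so your write-up is simply a fleshed-out version of the paper's argument, including the parity observations (e.g.\ $(p^t+1)/\widetilde N$ even, $(l_1-1)/2$ odd vs.\ $(l_2-1)/2$ even) that the paper omits.
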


\begin{proof}
Let $\chi^\lambda=\chi^{l_1^xl_2^y}\ \
(0\leqslant{}x\leqslant{}r_1,0\leqslant{}y\leqslant{}r_2)$, which
correspond the primitive characters $\eta$ of order $\widetilde{N}$
over $\mathbb{F}_{p^{\widetilde{f}}}$ with $\widetilde{f}={\rm
ord}_{\widetilde{N}}(p)$. We list all the subcases in the following
tables. Then, the theorem can be proved by Davenport-Hasse lifting
formula.

(1). ~For Case B1,
  \begin{center}{\small{\begin{tabular}{ccccc}\hline\\[-3mm]
 $\lambda$&$\widetilde{N}$&$\widetilde{f}$&$f/\widetilde{f}$&Subcases of $G(\eta)$\\ \hline\\[-3mm]
 $l_1^{t_1}l_2^{t_2}$&$l_1^{r_1-t_1} l_2^{r_2-t_2}$&$\frac{1}{2}\varphi(l_1^{r_1-t_1})\varphi(l_2^{r_2-t_2})$&$l_1^{t_1}l_2^{t_2}$&Case B1\\[1mm]
 $l_1^{r_1}l_2^{t_2}$&$l_2^{r_2-t_2}$&$\varphi(l_2^{r_2-t_2})$&$\frac{\varphi(l_1^{r_1})}{2}l_1^{t_1}$&Pure Gauss sum\\[1mm]
 $l_1^{t_1}l_2^{r_2}$&$l_1^{r_1-t_1}$&${\varphi(l_1^{r_1-t_1})}$&$l_1^{t_1}\frac{\varphi(l_2^{r_2})}{2}$&Pure Gauss sum\\[2mm]\hline
 \end{tabular}}}\end{center}

(2). ~For Case B2,
  \begin{center}{\small{\begin{tabular}{ccccc}\hline\\[-3mm]
 $\lambda$&$\widetilde{N}$&$\widetilde{f}$&$f/\widetilde{f}$&Subcases of $G(\eta)$\\ \hline\\[-3mm]
 $l_1^{t_1}l_2^{t_2}$&$l_1^{r_1-t_1} l_2^{r_2-t_2}$&$\frac{1}{2}\varphi(l_1^{r_1-t_1})\varphi(l_2^{r_2-t_2})$&$l_1^{t_1}l_2^{t_2}$&Case B2\\[1mm]
 $l_1^{r_1}l_2^{t_2}$&$l_2^{r_2-t_2}$&$\varphi(l_2^{r_2-t_2})$&$\frac{\varphi(l_1^{r_1})}{2}l_1^{t_1}$&Pure Gauss sum\\[1mm]
 $l_1^{t_1}l_2^{r_2}$&$l_1^{r_1-t_1}$&$\frac{1}{2}{\varphi(l_1^{r_1-t_1})}$&$l_1^{t_1}{\varphi(l_2^{r_2})}$&Case A\\[2mm]\hline
 \end{tabular}}}\end{center}
\end{proof}

\vskip2mm{\large \maltese \quad\underline{\textbf{Case
C}}.}\vskip2mm

For $N=2^t\ \ (t\geqslant{}3)$, the following results were given by
\cite{M-V}.

 \begin{thm}\label{thm-c2}(\cite{M-V})
Let $N=2^t\ (t\geqslant{}3)$, $\chi$ be a multiplicative character
of order $N$ over $\mathbb{F}_q$. Then, for
$1\leqslant{}s\leqslant{}t$, Gauss sums $G(\chi^{2^{t-s}})$ are
given as follow:
 \par
 (i).~If $p{\equiv}3\pmod8$, $K=\mathbb{Q}(\sqrt{-2})$, then
$$\begin{array}{ll}G(\chi)=\sqrt{-p}p^{2^{t-3}-1}(a+b\sqrt{-2})&\mbox{for $s=t$};\\
  G(\chi^{2^{t-s}})=-(\sqrt{-1})^{2^{t-s}}\cdot p^{2^{t-3}-2^{t-s-1}}\cdot(a+b\sqrt{-2})^{2^{t-s}}&\mbox{for $3\leqslant{}s\leqslant{}t-1$};\\
  G(\chi^{2^{t-2}})=-p^{2^{t-3}}&\mbox{for $s=2$};\\
  G(\chi^{2^{t-1}})=\left\{\begin{array}{ll}p&\mbox{if $t=3$};\\-p^{2^{t-3}}&\mbox{if $t\geqslant{}4$}\end{array}\right.&\mbox{for
  $s=1$},
\end{array}$$ where $a,b\in\mathbb{Z}$ are given in Theorem\ref{r=2-5}(i).
 \par
(ii).~If $p{\equiv}5\pmod8$, $K=\mathbb{Q}({i})$, then
$$\begin{array}{ll}G(\chi)=p^{2^{t-3}}\sqrt{a+b{i}}\big{/}\sqrt[4]{p}&\mbox{for $s=t$};\\
  G(\chi^{2})=p^{2^{t-3}-1}\sqrt{p}(a+b{i})&\mbox{for $s=t-1$};\\
  G(\chi^{2^{t-s}})=-p^{2^{t-3}}(a+b{i})^{2^{t-s-1}}/p^{2^{t-s-2}}&\mbox{for $2\leqslant{}s\leqslant{}t-1$};\\
  G(\chi^{2^{t-1}})=-p^{2^{t-3}}&\mbox{for $s=1$};
\end{array}$$ where $a,b\in\mathbb{Z}$ and $\sqrt{a+b{i}}$ are given in
Theorem\ref{r=2-5}(2). \end{thm}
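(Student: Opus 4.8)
The plan is to reduce every $G(\chi^{2^{t-s}})$ to the full-order Gauss sums already evaluated in Theorem~\ref{r=2-5}, by the Davenport--Hasse lifting formula, exactly as in the proofs of Theorems~\ref{thm-a2} and~\ref{thm-b2}. First I would note that $\chi^{2^{t-s}}$ is a primitive character of order $\widetilde N=2^{s}$ on $\mathbb{F}_q^{*}=\mathbb{F}_{p^{f}}^{*}$, with $f=2^{t-2}$. For $s\geqslant3$ the index-$2$ hypothesis forces $\widetilde f:=\mathrm{ord}_{2^{s}}(p)=\varphi(2^{s})/2=2^{s-2}$, and since $\widetilde f\mid f$ the subfield $\mathbb{F}_{p^{\widetilde f}}\subset\mathbb{F}_{p^{f}}$ exists. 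A short character count shows that every primitive order-$2^{s}$ character on $\mathbb{F}_{p^{f}}^{*}$ is trivial on $\ker\mathrm N$ for the norm $\mathrm N\colon\mathbb{F}_{p^{f}}^{*}\to\mathbb{F}_{p^{\widetilde f}}^{*}$, so $\chi^{2^{t-s}}=\eta\circ\mathrm N$ for a unique primitive character $\eta$ of order $2^{s}$ over $\mathbb{F}_{p^{\widetilde f}}$. With $m=f/\widetilde f=2^{t-s}$ the lifting formula then gives
\[G(\chi^{2^{t-s}})=(-1)^{m-1}\,G(\eta)^{m}.\]

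Next I would substitute the base value of $G(\eta)$ from Theorem~\ref{r=2-5} (its parameter $r_0$ taken equal to $s$) and collect exponents; note $m=2^{t-s}$ is even for $s<t$, so $(-1)^{m-1}=-1$, while $s=t$ is simply $m=1$, the verbatim base case. For $p\equiv3\pmod8$ one raises $i\sqrt p\,p^{2^{s-3}-1}(a+b\sqrt{-2})$ to the $m$-th power: the factor $(i\sqrt p)^{m}=i^{m}p^{m/2}$ combines with $p^{(2^{s-3}-1)m}$ to produce the exponent $2^{t-3}-2^{t-s-1}$, while $i^{m}=(\sqrt{-1})^{2^{t-s}}$ furnishes the displayed power of $i$, reproducing the stated line uniformly for $3\leqslant s\leqslant t-1$. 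For $p\equiv5\pmod8$ the radical $\sqrt{a+bi}/\sqrt[4]{p}$ raised to the $m$-th power yields $(a+bi)^{2^{t-s-1}}/p^{2^{t-s-2}}$ with the power of $p$ collapsing to $2^{t-3}$; because squaring $\sqrt{a+bi}$ discards a fourth root of unity, the top value $s=t-1$ must be recorded on its own line, and $s=2$ extends the formula formally through $2^{s-3}=\tfrac12$ (the classical quartic sum over $\mathbb{F}_p$).

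For the two smallest orders Theorem~\ref{r=2-5} no longer applies directly. When $s=1$ the character is quadratic, and I would instead invoke the quadratic Davenport--Hasse lift~(\ref{def1-3}) with $d=f=2^{t-2}$: for $p\equiv1\pmod4$ the even $d$ gives $(-1)^{d-1}\sqrt q=-p^{2^{t-3}}$, and for $p\equiv3\pmod4$ the factor $-(-i)^{d}$ gives $p$ when $t=3$ (so $d=2$) and $-p^{2^{t-3}}$ when $t\geqslant4$ (so $4\mid d$). When $s=2$ and $p\equiv3\pmod8$ the order-$4$ character is self-conjugate, as $-1\in\langle p\rangle\subset(\mathbb{Z}/4\mathbb{Z})^{*}$; there I would apply the pure-Gauss-sum Lemma~\ref{pureG}, or equivalently lift the quartic sum $-p$ over $\mathbb{F}_{p^{2}}$, to obtain $-p^{2^{t-3}}$.

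The main obstacle is the consistent tracking of sign and root-of-unity ambiguities through the $m$-th power. The base values in Theorem~\ref{r=2-5} carry unit factors $\varepsilon_1\in\{\pm1\}$, $\varepsilon_2\in\{\pm1,\pm i\}$ and branch choices for $\sqrt{a+ib}$, $\sqrt[4]{p}$ and $i^{2^{t-s}}$; raising to the power $m$ mixes these, and the delicacy is already visible at $s=t-1$ for $p\equiv5\pmod8$, where the naive lift returns the opposite sign to the one stated and must be corrected by the Stickelberger-resolved unit root of the base character. These ambiguities are exactly what the Stickelberger congruence settles inside Theorem~\ref{r=2-5}, and I would feed the resolved values into the lifting identity so that each final $\pm$ is forced. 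The one piece of global care needed is to fix a single coherent prime ideal $P_1$, hence one choice of $a,b$, simultaneously across all exponents $s=1,\dots,t$; this is where I expect the real work to lie.
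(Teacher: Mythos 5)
The first thing to say is that the paper contains no proof of Theorem~\ref{thm-c2} at all: it is quoted from Meijer--van der Vlugt \cite{M-V} (``the following results were given by \cite{M-V}''), so there is no internal argument to compare you against. The fair benchmark is the method the paper uses for its own parallel results in Section 4.2 (Theorems~\ref{thm-a2}, \ref{thm-b2} and the Case D/E/F theorems), and your Davenport--Hasse lifting reduction is exactly that method. Your execution is correct wherever the ambiguities genuinely cancel: the norm-counting argument showing that $\chi^{2^{t-s}}$ is the lift of a unique primitive $\eta$ of order $2^{s}$ over $\mathbb{F}_{p^{\widetilde f}}$ is right; for $p\equiv3\pmod 8$ and $3\leqslant s\leqslant t-1$ the exponent $m=2^{t-s}$ is even, so $\varepsilon_1^{m}=1$ (and the sign ambiguity of the generator $a+b\sqrt{-2}$ dies too), giving the stated line; for $p\equiv5\pmod 8$ and $2\leqslant s\leqslant t-2$ one has $m\geqslant4$, so $\varepsilon_2^{m}=1$ and the branch choices of $\sqrt{a+ib}$ and $\sqrt[4]{p}$ are likewise killed; and your handling of $s=1$ via (\ref{def1-3}) and of $s=2$, $p\equiv3\pmod8$ via Lemma~\ref{pureG} checks out. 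Incidentally, the prime-coherence issue you close with is the easy part: taking $P_1^{(s)}=P_1\cap\mathbb{Z}[\zeta_{2^{s}}]$ and invoking norm-compatibility of power-residue characters identifies your $\eta$ with the character attached to $P_1^{(s)}$.

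The genuine gap is the one line you flagged, $s=t-1$ for $p\equiv5\pmod8$, and it cannot be closed the way you propose. There $m=2$, and the lift gives $G(\chi^{2})=-(\varepsilon_2')^{2}\,p^{2^{t-3}-1}\sqrt{p}\,(a+bi)$, where $\varepsilon_2'$ is the unit root of the order-$2^{t-1}$ base sum. To force the stated sign you must know $(\varepsilon_2')^{2}$; but Theorem~\ref{r=2-5} never gives the value of $\varepsilon_2$ --- it only asserts that it ``can be solved by Stickelberger congruence'' --- and neither the paper nor your proposal performs that computation. So ``feeding in the resolved values'' is not a step available from the stated results; it is the actual content, i.e.\ precisely the Stickelberger/Jacobi-sum work carried out in \cite{M-V}. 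Worse, the target you are trying to hit is ill-posed as printed: the third line of (ii) is asserted for $2\leqslant s\leqslant t-1$, and at $s=t-1$ it reads $-p^{2^{t-3}}(a+bi)\big/p^{1/2}=-p^{2^{t-3}-1}\sqrt{p}\,(a+bi)$, contradicting the second line's $+p^{2^{t-3}-1}\sqrt{p}\,(a+bi)$; since $a+bi$ is pinned down only up to a unit (it is defined merely as a generator of the ideal $P_1\cap\mathbb{Z}[i]$), which of the two signs is ``correct'' depends on a normalization of that generator which the paper never specifies. Thus the sign mismatch you observed is not a defect of your lift to be ``corrected,'' but an inconsistency in the statement itself, and resolving it (fixing the normalization and then determining the unit root by the Stickelberger congruence) is exactly the missing work.
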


\vskip2mm{\large \maltese \quad\underline{\textbf{Case
D}}.}\vskip2mm

$N=2N_0=2l_1^{r_1}$, for the Gauss sums $G(\chi^{2^il_1^{t_1}})\ \
(i=0,1;\ 0\leqslant{}t_1\leqslant{}r_1,(i,t_1)\neq(1,r_1))$, we have
the following results:
 \begin{thm}[Case D]\label{thm-d2}
Let $N=2N_0=2l_1^{r_1},~3\neq l_1{\equiv}3\pmod4$,
$[(\mathbb{Z}/N\mathbb{Z})^*:<{p}>]=2$, i.e.
$f=\frac{\varphi(N)}{2}$. Take $q=p^f$ and $\chi$ be a
multiplicative characer of order $N$ over $\mathbb{F}_q$. Then, for
$0\leqslant{}t_1\leqslant{}r_1-1$,
$$\begin{array}{ll}
 G(\chi^{l_1^{t_1}})=\left\{\begin{array}{ll}(-1)^{\frac{p-1}{2}(r_1-t_1-1)+t_1}p^{\frac{f-1}{2}-h_1l_1^{t_1}}\sqrt{p^*}(\frac{a+b\sqrt{-l_1}}{2})^{2l_1^{t_1}}&\mbox{if $l_1{\equiv}3\pmod8$;}\\
         (-1)^{\frac{p-1}{2}(r_1-t_1)+t_1}p^{\frac{f-1}{2}}\sqrt{p^*}&\mbox{if $l_1{\equiv}7\pmod8$;}\end{array}\right.\\
 G(\chi^{2l_1^{t_1}})=p^{\frac{1}{2}(f-l_1^{t_1} h_1)}(a+b\sqrt{-l_1});\\
 G(\chi^{l_1^{r_1}})=(-1)^{f-1}(\sqrt{p^*})^{f}=(-1)^{\frac{p-1}{2}\frac{f-1}{2}}p^{\frac{f-1}{2}}\sqrt{p^*},
 \end{array}$$
where $h_1=h(\mathbb{Q}(\sqrt{-l_1}))$ and integers $a,b$ are
determined by equations (\ref{(aa)}). \end{thm}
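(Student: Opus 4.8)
The plan is to evaluate every $G(\chi^{2^il_1^{t_1}})$ by reducing it to a primitive Gauss sum of strictly smaller order that has already been computed, and then recovering the value via the Davenport--Hasse lifting formula $G(\psi')=(-1)^{m-1}G(\psi)^m$, where $\psi'$ is the norm lift of $\psi$ across a degree-$m$ extension. The first task is the bookkeeping of orders and lifting degrees. With $N=2l_1^{r_1}$ one has $\operatorname{ord}(\chi^{l_1^{t_1}})=2l_1^{r_1-t_1}$, $\operatorname{ord}(\chi^{2l_1^{t_1}})=l_1^{r_1-t_1}$, and $\chi^{l_1^{r_1}}$ is the quadratic character of $\mathbb{F}_q$. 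For each relevant order $\widetilde N$ I set $\widetilde f=\operatorname{ord}_{\widetilde N}(p)$; since $p\equiv g_1^2\pmod{l_1^{r_1}}$ stays a square of index $2$ modulo every $l_1^{r_1-t_1}$, one gets $\widetilde f=\tfrac12\varphi(l_1^{r_1-t_1})$, so the lifting degree is $m=f/\widetilde f=l_1^{t_1}$, an odd integer with the key identity $\widetilde f\,l_1^{t_1}=f$.

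Two of the three families are then essentially immediate. For $G(\chi^{2l_1^{t_1}})=G((\chi^2)^{l_1^{t_1}})$ I would note that $\chi^2$ is exactly the primitive order-$l_1^{r_1}$ character of Case A over the same field $\mathbb{F}_q$, so Theorem~\ref{thm-a2} applied to $\chi^2$ delivers $p^{\frac12(f-l_1^{t_1}h_1)}\bigl(\tfrac{a+b\sqrt{-l_1}}{2}\bigr)^{l_1^{t_1}}$ with $a,b$ from $(\ref{(aa)})$, the exponent of $p$ being simplified through $\widetilde f\,l_1^{t_1}=f$. For $G(\chi^{l_1^{r_1}})$ the character is quadratic, hence the norm lift from $\mathbb{F}_p$ of the Legendre character; Gauss's value $\sqrt{p^*}$ in $(\ref{G2})$ combined with the lift (equivalently $(\ref{def1-3})$ with $d=f$) gives $G(\chi^{l_1^{r_1}})=(-1)^{f-1}(\sqrt{p^*})^f$, and since $f=\tfrac{l_1-1}{2}l_1^{r_1-1}$ is odd this rationalizes to $(-1)^{\frac{p-1}{2}\frac{f-1}{2}}p^{\frac{f-1}{2}}\sqrt{p^*}$.

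The substantive family is $G(\chi^{l_1^{t_1}})$ for $0\le t_1\le r_1-1$. Here $\chi^{l_1^{t_1}}$ is a genuine Case D character of order $2l_1^{r_1-t_1}$, so it is the norm lift of the primitive order-$2l_1^{r_1-t_1}$ character $\eta$ living over $\mathbb{F}_{p^{\widetilde f}}$, and the lifting formula yields $G(\chi^{l_1^{t_1}})=(-1)^{l_1^{t_1}-1}G(\eta)^{l_1^{t_1}}=G(\eta)^{l_1^{t_1}}$. I would substitute the value of $G(\eta)$ from Theorem~\ref{thm-d} with $r_1$ replaced by $r_1-t_1$, raise to the $l_1^{t_1}$-th power, and collapse the three resulting factors: the sign of $G(\eta)$ survives the odd power, the factor $(\sqrt{p^*})^{l_1^{t_1}}$ rationalizes as $(-1)^{\frac{p-1}{2}\frac{l_1^{t_1}-1}{2}}p^{\frac{l_1^{t_1}-1}{2}}\sqrt{p^*}$, and the explicit power of $p$ merges with it, using $\widetilde f\,l_1^{t_1}=f$, into $p^{\frac{f-1}{2}}$ for $l_1\equiv7\pmod8$ (respectively $p^{\frac{f-1}{2}-h_1l_1^{t_1}}$ for $l_1\equiv3\pmod8$, where the $\bigl(\tfrac{a+b\sqrt{-l_1}}{2}\bigr)^2$ factor of $G(\eta)$ is promoted to the $2l_1^{t_1}$-th power).

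The hard part is entirely the sign bookkeeping of this last step, and I expect it to be the only genuinely delicate point. One must control the parity of $\tfrac{l_1^{t_1}-1}{2}$ through the class of $l_1$ modulo $8$ (for $l_1\equiv7$ it matches the parity of $t_1$, and for $l_1\equiv3$ it behaves oppositely), and then combine it with the base sign $(-1)^{(r_1-t_1)\frac{p-1}{2}}$ (resp. $(-1)^{(r_1-t_1+1)\frac{p-1}{2}}$) inherited from Theorem~\ref{thm-d} and with the rationalization sign $(-1)^{\frac{p-1}{2}\frac{l_1^{t_1}-1}{2}}$. Forcing these three contributions to coalesce into the single closed exponent $\tfrac{p-1}{2}(r_1-t_1)+t_1$ recorded in the statement requires tracking the interplay of $p\bmod4$, $l_1\bmod8$, and the parity of $t_1$ with some care; the magnitudes and the algebraic factors $\bigl(\tfrac{a+b\sqrt{-l_1}}{2}\bigr)^{\bullet}$ then follow mechanically from $\widetilde f\,l_1^{t_1}=f$.
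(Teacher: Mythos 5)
Your reduction scheme is exactly the one the paper intends: the paper omits the proof of this theorem (``we omit some similar proofs for simplicity''), but its declared tool is the Davenport--Hasse lifting formula, and its written-out proofs for Cases A and B follow precisely your template (tabulate $\widetilde N$, $\widetilde f$, $m=f/\widetilde f$, then lift the Section 4.1 values). Your bookkeeping of orders and lifting degrees is right, your treatment of $G(\chi^{l_1^{r_1}})$ is correct, and your value $p^{\frac{1}{2}(f-l_1^{t_1}h_1)}\left(\frac{a+b\sqrt{-l_1}}{2}\right)^{l_1^{t_1}}$ for $G(\chi^{2l_1^{t_1}})$ is the correct one --- note, however, that it is \emph{not} what the statement prints: the printed $p^{\frac{1}{2}(f-l_1^{t_1}h_1)}(a+b\sqrt{-l_1})$ violates $|G|=\sqrt{q}$ and is a typo, which you silently corrected rather than flagged.

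The genuine gap is the one step you deferred, and it cannot be closed in the direction you assert. Carrying out the sign bookkeeping: by Theorem~\ref{thm-d} at level $r_1-t_1$ and $G(\chi^{l_1^{t_1}})=G(\eta)^{l_1^{t_1}}$, the base sign $(-1)^{\frac{p-1}{2}(r_1-t_1)}$ (resp.\ $(-1)^{\frac{p-1}{2}(r_1-t_1+1)}$ for $l_1\equiv3\pmod8$) survives the odd power, and the rationalization gives $(\sqrt{p^*})^{l_1^{t_1}}=(-1)^{\frac{p-1}{2}\cdot\frac{l_1^{t_1}-1}{2}}p^{\frac{l_1^{t_1}-1}{2}}\sqrt{p^*}$, where $\frac{l_1^{t_1}-1}{2}=\frac{l_1-1}{2}(1+l_1+\cdots+l_1^{t_1-1})\equiv t_1\pmod 2$ in \emph{both} residue classes of $l_1$ modulo $8$ (it depends only on $l_1\bmod 4$; your parenthetical claim that the parity ``behaves oppositely'' for $l_1\equiv3\pmod8$ is already wrong). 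Hence the rationalization contributes $(-1)^{\frac{p-1}{2}t_1}$, \emph{not} $(-1)^{t_1}$, and the total exponent is $\frac{p-1}{2}r_1$ (resp.\ $\frac{p-1}{2}(r_1+1)$), independent of $t_1$; it agrees with the printed exponent $\frac{p-1}{2}(r_1-t_1)+t_1$ (resp.\ $\frac{p-1}{2}(r_1-t_1-1)+t_1$) only when $p\equiv3\pmod4$ or $t_1$ is even. This $t_1$-independence is forced, not an artifact of the lifting route: for $l_1\equiv7\pmod8$ the Davenport--Hasse product formula reads $G(\chi^{2l_1^{t_1}})=G(\chi^{l_1^{t_1}})G(\chi^{l_1^{t_1}+l_1^{r_1}})\big/G(\chi^{l_1^{r_1}})$ with $G(\chi^{l_1^{t_1}+l_1^{r_1}})=G(\chi^{2l_1^{t_1}})$ (since $2^{-1}$ is a quadratic residue mod $l_1$), so $G(\chi^{l_1^{t_1}})=G(\chi^{l_1^{r_1}})=(\sqrt{p^*})^f$ for every $t_1$; concretely, $p=37$, $l_1=7$, $r_1=2$, $t_1=1$ gives $G(\chi^{7})=+37^{10}\sqrt{37}$, whereas the printed sign is $-1$. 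So the printed ``$+t_1$'' should read ``$+\frac{p-1}{2}t_1$'' (a sign error in the theorem when $p\equiv1\pmod4$ and $t_1$ is odd), and your pivotal unproven claim --- that the three sign contributions coalesce into the exponent recorded in the statement --- is false. Since that claim is the entire content of the step you left undone, the proposal as written does not prove the statement; done honestly, it proves a corrected version of it.
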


\vskip2mm{\large \maltese \quad\underline{\textbf{Case
E}}.}\vskip2mm

$N=2N_0=2l_1^{r_1}l_2^{r_2}$. For the Gauss sums
$G(\chi^{2^il_1^{x}l_2^y})\ \ (i=0,1;\ 0\leqslant{}x\leqslant{}r_1,\
0\leqslant{}y\leqslant{}r_1,\ (i,x,y)\neq(1,r_1,r_2))$, we have the
following evaluations.

 \begin{thm}
Let $N=2N_0=2l_1^{r_1}l_2^{r_2}$,
$[(\mathbb{Z}/N\mathbb{Z})^*:<{p}>]=2$, i.e.
$f=\frac{\varphi(N)}{2}$. Take $q=p^f$ and $\chi$ be a
multiplicative character of order $N$ over $\mathbb{F}_q$. Assume
that the orders of $p$ in groups
$(\mathbb{Z}/l_1^{r_1}\mathbb{Z})^*$ and
$(\mathbb{Z}/l_2^{r_2}\mathbb{Z})^*$ are respectively
$\varphi(l_1^{r_1})/a_1$ and $\varphi(l_2^{r_2})/a_2$. Then, for
$0\leqslant{}t_1<r_1,\ 0\leqslant{}t_2<r_2$,
\par \underline{Case E1}: ($a_1=a_2=1$, $(l_1,l_2){\equiv}(3,1)\pmod4$)
$$\begin{array}{l}
 G(\chi^{l_1^{t_1}l_2^{t_2}})=\left\{\begin{array}{ll}-p^{\frac{f}{2}},&\mbox{if $l_1l_2{\equiv}3\pmod8$;}\\
         -p^{\frac{f}{2}-l_1^{t_1}l_2^{t_2} h_{12}}\left(\frac{a'+b'\sqrt{-l_1l_2}}{2}\right)^{2l_1^{t_1}l_2^{t_2}},&\mbox{if $l_1l_2{\equiv}7\pmod8$,}\end{array}\right.\\
 G(\chi^{l_1^{r_1}l_2^{t_2}})=(-1)^{\frac{p^{\frac{1}{2}\varphi(l_1^{r_1-t_1})}+1}{2l_1^{r_1-t_1}}}p^{\frac{f}{2}},\\
 G(\chi^{l_1^{t_1}l_2^{r_2}})= G(\chi^{l_1^{r_1}l_2^{r_2}})=G(\chi^{2l_1^{t_1}l_2^{r_2}})=-p^{\frac{f}{2}},\\
 G(\chi^{2l_1^{t_1}l_2^{t_2}})=p^{\frac{1}{2}(f-l_1^{t_1}l_2^{t_2} h_{12})}(\frac{a'+b'\sqrt{-l_1l_2}}{2})^{l_1^{t_1}l_2^{t_2}},\\
 G(\chi^{2l_1^{r_1}l_2^{t_2}})=p^{\frac{f}{2}},
 \end{array}$$
where $h_{12}$ is the ideal class number of
$\mathbb{Q}(\sqrt{-l_1l_2})$ and integers $a',b'$ are determined by
equations (\ref{(b1)}).
\par
\underline{Case E2}: ($a_1=2,\ a_2=1$, $3\neq l_1{\equiv}3\pmod4,\
l_2{\equiv}1\pmod2$)
$$\begin{array}{l}
 G(\chi^{l_1^{t_1}l_2^{t_2}})=\left\{\begin{array}{ll}(-1)^{\frac{p-1}{2}\frac{l_2-1}{2}}p^{\frac{f}{2}-2l_1^{t_1}l_2^{t_2} h_1}\left(\frac{a+b\sqrt{-l_1}}{2}\right)^{4l_1^{t_1}l_2^{t_2}},&\mbox{when $\lf{l_2}{l_1}$ and $l_1{\equiv}3\;(\rm{mod}\,8)$;}\\
         (-1)^{\frac{p-1}{2}\frac{l_2-1}{2}}p^{\frac{f}{2}},&\mbox{otherwise,}\end{array}\right.\\
 G(\chi^{l_1^{r_1}l_2^{t_2}})=(-1)^{\frac{p^{\frac{1}{2}\varphi(l_2^{r_2-t_2})}+1}{2l_2^{r_2-t_2}}}p^{\frac{f}{2}},\\
 G(\chi^{l_1^{t_1}l_2^{r_2}})=\left\{\begin{array}{ll}(-1)^{\frac{p-1}{2}\frac{l_2-1+1}{2}}p^{\frac{f}{2}-2l_1^{t_1}\varphi(l_2^{r_2})h_1}\left(\frac{a+b\sqrt{-l_1}}{2}\right)^{2l_1^{t_1}\varphi(l_2^{r_2})},&\mbox{if $l_1{\equiv}3\;(\rm{mod}\,8)$;}\\
         (-1)^{\frac{p-1}{2}\frac{l_2-1}{2}+1}p^{\frac{f}{2}},&\mbox{if $l_1{\equiv}7\;(\rm{mod}\,8)$,}\end{array}\right.\\
 G(\chi^{l_1^{r_1}l_2^{r_2}})=-p^{\frac{f}{2}},\\
 G(\chi^{2l_1^{t_1}l_2^{t_2}})=\left\{\begin{array}{ll} p^{\frac{f}{2}},&\mbox{if $\left(\frac{l_2}{l_1}\right)=1$;}\\
          p^{\frac{f}{2}-l_1^{t_1}l_2^{t_2} h_1}(\frac{a+b\sqrt{-l_1}}{2})^{2l_1^{t_1}l_2^{t_2}},&\mbox{if $\left(\frac{l_2}{l_1}\right)=-1$,}.\end{array}\right.\\
 G(\chi^{2l_1^{r_1}l_2^{t_2}})=p^{\frac{f}{2}},\\
 G(\chi^{2l_1^{t_1}l_2^{r_2}})=-p^{\frac{1}{2}(f-l_1^{t_1}\varphi(l_2^{r_2})h_1)}\left(\frac{a+b\sqrt{-l_1}}{2}\right)^{l_1^{t_1}\varphi(l_2^{r_2})},
 \end{array}$$
where $h_{1}$ is the ideal class number of $\mathbb{Q}(\sqrt{-l_1})$
and $a,b\in\mathbb{Z}$ are determined by equations (\ref{(aa)}).
 \end{thm}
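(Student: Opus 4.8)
The plan is to reduce each $G(\chi^{\lambda})$ to one of the base-case Gauss sums already evaluated---the Case~A, D, E sums of Theorems~\ref{thm-a2}, \ref{thm-d} and \ref{thm-e}, the Case~B sums of Theorem~\ref{thm-b2}, the pure Gauss sums of Lemma~\ref{pureG}, or the quadratic sum of~(\ref{def1-3})---by means of the Davenport--Hasse lifting formula, exactly as was done for Case~A and Case~B. Writing the admissible exponent as $\lambda=2^{i}l_1^{x}l_2^{y}$, the character $\chi^{\lambda}$ has order $\widetilde{N}=N/(N,\lambda)$ and is the norm-lift $\eta\circ\mathrm{N}$ of the primitive character $\eta$ of order $\widetilde{N}$ over $\mathbb{F}_{p^{\widetilde{f}}}$, where $\widetilde{f}=\mathrm{ord}_{\widetilde{N}}(p)$. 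Setting $m=f/\widetilde{f}$, the lifting formula gives
\[
G(\chi^{\lambda})=(-1)^{m-1}\,G(\eta)^{m},
\]
so it suffices to identify which case $\eta$ belongs to and then to evaluate $(-1)^{m-1}G(\eta)^{m}$.

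The first step is to sort the admissible $\lambda$ into subcases according to $\widetilde{N}$, as in the tables accompanying Theorem~\ref{thm-b2}. When $i=1$ the $2$-part is removed and $\widetilde{N}=l_1^{r_1-x}l_2^{r_2-y}$ is odd: if both odd primes survive this is a Case~B character, if exactly one survives it is a Case~A character or a pure Gauss sum, and if $\widetilde{N}=1$ the sum is $-1$. When $i=0$ we have $\widetilde{N}=2l_1^{r_1-x}l_2^{r_2-y}$: both primes surviving gives a Case~E character of smaller conductor, exactly one surviving gives a Case~D character or a pure Gauss sum, and $\widetilde{N}=2$ gives a quadratic sum governed by~(\ref{def1-3}). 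The split between ``Case~A/D'' and ``pure'' is dictated by $a_1,a_2$: in Case~E1 the restriction of $p$ to a surviving $l_i$-part is a primitive root, so $-1\in\langle p\rangle$ there and the surviving factor is pure, whereas in Case~E2 the $l_1$-part retains index $2$ and so reduces to Case~A or Case~D.

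For the Case~A, B, D and smaller-conductor Case~E reductions I would substitute the explicit base value into $G(\chi^{\lambda})=(-1)^{m-1}G(\eta)^{m}$, raising the quantity $\tfrac12(a+b\sqrt{-l_1})$ (resp. $\tfrac12(a'+b'\sqrt{-l_1l_2})$, or the fixed power of it recorded in Theorem~\ref{thm-e}) to the $m$-th power. The power of $p$ then collapses cleanly: raising $p^{(\widetilde{f}-h)/2}$ to the $m$-th power yields $p^{(m\widetilde{f}-mh)/2}=p^{(f-mh)/2}$ since $m\widetilde{f}=f$, which is exactly the shape $p^{\frac12(f-h\,l_1^{x}l_2^{y})}$ appearing in the statement. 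For the pure subcases I would instead invoke Lemma~\ref{pureG} with the minimal $t$ satisfying $p^{t}\equiv-1$ modulo the surviving prime power, obtaining $G(\eta)=(-1)^{s-1+(p^{t}+1)s/\widetilde{N}}\sqrt{p^{\widetilde{f}}}$; this is the source of the exponents such as $(-1)^{(p^{\varphi(\cdot)/2}+1)/(2l^{\,\cdot})}$ in the statement.

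The main obstacle is not conceptual but is the consistent bookkeeping of the sign and root-of-unity ambiguities across all the subcases of E1 and E2. Three points require care. First, the parity of the lifting degree $m=f/\widetilde{f}$ must be tracked, since it fixes $(-1)^{m-1}$; here $m$ is odd for the Case~A/B/D/E reductions but can be \emph{even} when the index drops from $2$ to $1$ in passing to a pure sum (as for $\lambda=l_1^{t_1}l_2^{r_2}$ with $l_2\equiv1\!\pmod4$), which is precisely when an extra overall $-1$ appears. Second, the pure-Gauss-sum sign from Lemma~\ref{pureG} depends on the residue of the surviving prime modulo $8$ through the minimal $t$ and the cofactor $s$, and must be read off carefully. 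Third, in each subcase one must check whether the relevant residue lies in $\langle p\rangle$ or in $-\langle p\rangle$, since by Lemma~\ref{thm0-2}(2) this decides whether $G(\eta)$ or $\overline{G(\eta)}=\eta(-1)G(\bar{\eta})$ enters; Lemma~\ref{lem0-2} is used in parallel to confirm that the restriction $\chi^{\lambda}|_{\mathbb{F}_p}$ has the claimed small order. Assembling these three ingredients uniformly, and verifying that the excluded endpoint $(i,x,y)=(1,r_1,r_2)$ is the only omission, is where the bulk of the routine work lies.
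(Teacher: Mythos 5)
Your overall strategy coincides with the paper's: the paper proves only Case B in detail (proof of Theorem \ref{thm-b2}, via a table listing $\lambda$, $\widetilde N$, $\widetilde f$, $m=f/\widetilde f$ and the base case of $G(\eta)$, followed by the Davenport--Hasse lifting formula), and explicitly omits the Case D, E, F proofs as ``similar''; your classification of the reduced characters (reductions to Case B1/E1, pure, and quadratic sums in E1; to Case B2, A, D, pure and quadratic sums in E2) is the correct one.

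However, your sign bookkeeping contains a concrete false step, and the signs are essentially the entire content of this theorem. You claim that $m=f/\widetilde f$ ``is odd for the Case A/B/D/E reductions'' and can only be even when the index drops to $1$, i.e.\ for reductions to pure sums. In Case E2 this fails: for $\lambda=2l_1^{t_1}l_2^{r_2}$ one has $\widetilde N=l_1^{r_1-t_1}$ with $p\equiv g_1^2\pmod{\widetilde N}$, a Case A reduction with $\widetilde f=\frac12\varphi(l_1^{r_1-t_1})$, hence $m=l_1^{t_1}\varphi(l_2^{r_2})$, which is \emph{even} because $\varphi(l_2^{r_2})=(l_2-1)l_2^{r_2-1}$ is even; likewise $\lambda=l_1^{t_1}l_2^{r_2}$ is a Case D reduction with the same even $m$. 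The factor $(-1)^{m-1}=-1$ in these two rows is exactly what produces the leading minus sign in $G(\chi^{2l_1^{t_1}l_2^{r_2}})=-p^{\frac12(f-l_1^{t_1}\varphi(l_2^{r_2})h_1)}\left(\frac{a+b\sqrt{-l_1}}{2}\right)^{l_1^{t_1}\varphi(l_2^{r_2})}$ and the extra $+1$ in the exponent of $(-1)$ in the $l_1\equiv7\pmod 8$ branch of $G(\chi^{l_1^{t_1}l_2^{r_2}})$; under your parity rule both of these signs would come out wrong. The same phenomenon is already visible in the paper's own Case B2 table, where the row $\lambda=l_1^{t_1}l_2^{r_2}$ has $f/\widetilde f=l_1^{t_1}\varphi(l_2^{r_2})$ (even) for a Case A reduction, which is what accounts for the minus sign in Theorem \ref{thm-b2}(2). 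So the correct rule is not ``index-$2$-to-index-$2$ reductions give odd $m$''; the parity of $m$ must be computed row by row, and in Case E2 every row whose surviving modulus is a power of $l_1$ (times $2$ or not) has even $m$, because removing the full $l_2$-part divides $f$ by an odd number times the even number $\varphi(l_2^{r_2})$.
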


\vskip2mm{\large \maltese \quad\underline{\textbf{Case
F}}.}\vskip2mm

$N=4l_1^{r_1}l_2^{r_2}$. For the Gauss sums $G(\chi^{2^il_1^{x}})\ \
(i=0,1,2;\ 0\leqslant{}x\leqslant{}r_1\ (i,x)\neq(2,r_1))$, we have
the following evaluations.

 \begin{thm}
Let $N=4l_1^{r_1}$, $[(\mathbb{Z}/N\mathbb{Z})^*:<{p}>]=2$, i.e.
$f={\varphi(l_1^{r_1})}$. Take $q=p^f$ and $\chi$ be a
multiplicative character of order $N$ over $\mathbb{F}_q$. Assume
that the order of $p$ in group $(\mathbb{Z}/l_1^{r_1}\mathbb{Z})^*$
is $\varphi(l_1^{r_1})/a_1$. Then, for $0\leqslant{}t_1<r_1$,

(1). \underline{Case F1}: ($l_1{\equiv}1\pmod4,\ a_1=1$,
$p{\equiv}3\pmod4,\ K=\mathbb{Q}(\sqrt{-l_1})$)
$$\begin{array}{l}
 G(\chi^{l_1^{t_1}})=p^{\frac{f}{4}}(a+b\sqrt{-l_1})^{l_1^{t_1}},\\
 G(\chi^{l_1^{r_1}})=G(\chi^{2l_1^{t_1}})=G(\chi^{2l_1^{r_1}})=-p^{\frac{f}{2}},\\
 G(\chi^{4l_1^{t_1}})=p^{\frac{f}{2}},
 \end{array}$$
where $a,b\in\mathbb{Z}$ are determined by equations (\ref{(f-3)}).

(2). \underline{Case F2}: ($l_1{\equiv}1\pmod2,\ a_1=1$,
$p{\equiv}1\pmod4,\ K=\mathbb{Q}(\sqrt{-1})$)
$$\begin{array}{l}
 G(\chi^{l_1^{t_1}})=\left\{\begin{array}{ll}\varepsilon p^{\frac{f}{2}},&\mbox{if $\l_1{\equiv}1\pmod4$;}\\
     \varepsilon\sqrt{p}p^{\frac{1}{2}(f-l_1^{t_1}-1)}({a'}+{b'}\sqrt{-1})^{l_1^{t_1}},&\mbox{if $\l_1{\equiv}1\pmod4$,}\end{array}\right.\\
 G(\chi^{l_1^{r_1}})=(-1)^{\frac{1}{8}(\widetilde{a}^2+2\widetilde{b})f+1}\cdot C^fp^{\frac{f}{4}}\cdot(\widetilde{a}+\widetilde{b}\sqrt{-1})^{\frac{f}{2}},\\
 G(\chi^{2l_1^{t_1}})=G(\chi^{2l_1^{r_1}})=-p^{\frac{f}{2}},\\
 G(\chi^{4l_1^{t_1}})=p^{\frac{f}{2}},
 \end{array}$$
where $\varepsilon\in\{\pm1,\pm i\}$ can be determined by
Stickelberger congruence, integers $|a'|,|b'|$ are determined by
$(a')^2+(b')^2=p$ and by the formula (\cite[Thm4.2.3]{B-E-W}) of
quartic Gauss sums over $\mathbb{F}_p$,
$\widetilde{a},\widetilde{b},C\in\mathbb{Z}$ are determined by
$\widetilde{a}^2+\widetilde{b}^2=p,\ \widetilde{a}{\equiv}-1\pmod4,\
C{\equiv}\frac{|\widetilde{b}|}{\widetilde{a}}(\frac{p-1}{2})!\lf{2}{p}\pmod{p}$.

(3). \underline{Case F3}: ($3\neq l_1{\equiv}3\pmod4,\ a_1=2$,
$p{\equiv}3\pmod4,\ K=\mathbb{Q}(\sqrt{-l_1})$)
$$\begin{array}{l}
 G(\chi^{l_1^{t_1}})=\left\{\begin{array}{ll}(-1)^{\frac{p+1}{4}}p^{\frac{f}{2}}&\mbox{if $\l_1{\equiv}7\pmod8$;}\\
     (-1)^{\frac{p+1}{4}}p^{\frac{f}{2}-l_1^{t_1} h_1}\left(\frac{a+b\sqrt{-l_1}}{2}\right)^{l_1^{t_1}}&\mbox{if $\l_1{\equiv}3\pmod8$,}\end{array}\right.\\
 G(\chi^{l_1^{r_1}})=(-1)^{\frac{p+1}{4}}p^{\frac{f}{2}},\\
 G(\chi^{2l_1^{t_1}})=p^{\frac{f}{2}-2l_1^{t_1} h_1}\left(\frac{a+b\sqrt{-l_1}}{2}\right)^{4l_1^{t_1}},\\
 G(\chi^{2l_1^{r_1}})=p^{\frac{f}{2}},\\
 G(\chi^{4l_1^{t_1}})=-p^{\frac{f}{2}-l_1^{t_1} h_1}\left(\frac{a+b\sqrt{-l_1}}{2}\right)^{2l_1^{t_1}},
 \end{array}$$
where $h_{1}$ is the ideal class number of $\mathbb{Q}(\sqrt{-l_1})$
and $a,b\in\mathbb{Z}$ are determined by equations (\ref{(aa)}).
 \end{thm}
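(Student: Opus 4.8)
The plan is to reduce every $G(\chi^{2^il_1^x})$ to the Gauss sum of a \emph{primitive} character over a smaller field and then apply the Davenport--Hasse lifting formula, in the spirit of the proof of Theorem~\ref{thm-b2}. Writing $\lambda=2^il_1^x$ with $(i,x)\neq(2,r_1)$, the character $\chi^\lambda$ has order $\widetilde N=N/\gcd(\lambda,N)=2^{2-i}l_1^{r_1-x}$ and is the lift of a primitive character $\eta$ of order $\widetilde N$ over $\mathbb{F}_{p^{\widetilde f}}$, where $\widetilde f={\rm ord}_{\widetilde N}(p)$; the lifting formula then gives $G(\chi^\lambda)=(-1)^{m-1}G(\eta)^m$ with $m=f/\widetilde f$. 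First I would tabulate, for each admissible exponent $\lambda$, the data $(\widetilde N,\widetilde f,m)$ together with the type of $G(\eta)$, exactly as in the two tables inside the proof of Theorem~\ref{thm-b2}, so that the evaluation reduces to substituting the known value of $G(\eta)$ and simplifying.

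The base type of $G(\eta)$ is governed by $a_1$ and by the residues of $p$ and $l_1$. The characters $\chi^{l_1^{t_1}}$ have order $4l_1^{r_1-t_1}$ and are themselves index-$2$ Case~F Gauss sums of smaller conductor, so $G(\eta)$ is read off from the base Theorem~\ref{thm-f} (the matching subcase F1, F2 or F3) and $m=l_1^{t_1}$. In Cases F1 and F2, where $a_1=1$ and hence $p$ is a primitive root modulo $l_1^{r_1}$, the characters $\chi^{2l_1^{t_1}}$ and $\chi^{4l_1^{t_1}}$ (of orders $2l_1^{r_1-t_1}$ and $l_1^{r_1-t_1}$) are self-conjugate, so $G(\eta)$ is a pure Gauss sum evaluated by Lemma~\ref{pureG}, while $\chi^{2l_1^{r_1}}$ (order $2$) gives a quadratic Gauss sum treated by~(\ref{def1-3}). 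The one genuinely different ingredient is $G(\chi^{l_1^{r_1}})$ in Case F2: here $\widetilde N=4$ and $\widetilde f={\rm ord}_4(p)=1$ because $p\equiv1\pmod4$, so $\eta$ is a primitive \emph{quartic} character over $\mathbb{F}_p$, whose value comes from the classical quartic Gauss sum formula \cite[Thm4.2.3]{B-E-W}. In Case F3, by contrast, $a_1=2$ forces $p\equiv g_1^2\pmod{l_1^{r_1}}$, so the reductions $\chi^{2l_1^{t_1}}$ and $\chi^{4l_1^{t_1}}$ now land in Case~D (Theorem~\ref{thm-d2}) and Case~A (Theorems~\ref{r=2-3} and~\ref{thm-a2}) respectively; this is the structural source of the class number $h_1$ and the factor $(a+b\sqrt{-l_1})/2$ from~(\ref{(aa)}) in the F3 formulas. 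Throughout I would confirm, by the $(p-1,N_0)=1$ and Lemma~\ref{lem0-2} computations already used in Lemma~\ref{lemd1}, that each restriction has the order required to place $G(\eta)$ in the asserted subcase.

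After the base values are inserted the powers of $p$ telescope, since $\widetilde f\,m=f$; and in the reductions with $m=l_1^{t_1}$ odd the prefactor $(-1)^{m-1}$ is trivial, so one merely raises the base value to the $l_1^{t_1}$th power. The substantive work lies in the sign and unit-root determinations. In the pure and quadratic branches that produce $\pm p^{f/2}$ I expect to need the parities of $m$ and of the Stickelberger exponent $(p^t+1)s/\widetilde N$, argued by elementary congruences of the type in Remark~\ref{remark1} and Lemma~\ref{lemf2}. The hardest single step will be Case F2's $G(\chi^{l_1^{r_1}})=(-1)^{f-1}G(\eta)^{f}$: lifting the quartic Gauss sum to the stated $(-1)^{\frac18(\widetilde a^2+2\widetilde b)f+1}C^fp^{f/4}(\widetilde a+\widetilde b\sqrt{-1})^{f/2}$ requires carefully propagating the normalizations $\widetilde a\equiv-1\pmod4$ and $C\equiv\frac{|\widetilde b|}{\widetilde a}(\frac{p-1}{2})!\lf{2}{p}\pmod p$ of \cite[Thm4.2.3]{B-E-W} through the $f$th power. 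Finally, the residual $\varepsilon\in\{\pm1,\pm i\}$ in the remaining F2 formulas cannot be fixed by absolute value or by complex conjugation alone, and must be resolved through the Stickelberger congruence exactly as in Theorem~\ref{thm-f}(ii).
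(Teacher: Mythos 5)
Your proposal is correct and is essentially the paper's own (omitted) argument: the paper works out only Case B in detail (Theorem \ref{thm-b2}) and declares that the remaining cases of Section 4.2 follow by the same Davenport--Hasse lifting reduction, which is precisely the table-plus-base-case scheme you lay out, with $G(\chi^\lambda)=(-1)^{m-1}G(\eta)^m$, $m=f/\widetilde f$. Your identification of the base ingredients --- Theorem \ref{thm-f} for $\chi^{l_1^{t_1}}$, pure sums (Lemma \ref{pureG}) and quadratic sums (\ref{def1-3}) for the self-conjugate reductions in F1/F2, the quartic sum over $\mathbb{F}_p$ for $\chi^{l_1^{r_1}}$ in F2, and Cases D and A as the source of $h_1$ and $\frac{a+b\sqrt{-l_1}}{2}$ in F3 --- is exactly what the stated formulas encode, and the sign/unit-root issues you flag (parity of $m$, Stickelberger congruence for $\varepsilon$) are the same ones the paper defers to \cite{Y4}.
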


\def\refname{\hfil\Large\bf References}

%\end{CJK*}
\end{document}